\newtheorem{thm}{Theorem}[section]
\newtheorem{lem}[thm]{Lemma}
\newtheorem{prop}[thm]{Proposition}
\newtheorem{cor}[thm]{Corollary}
\theoremstyle{definition}
\newtheorem{defn}[thm]{Definition}
\newtheorem{rem}[thm]{Remark}
\newtheorem{exam}[thm]{Example}
\newcommand{\G}{\mathbf{G}}
\newcommand{\Galois}{\Gamma_{E/F}}
\newcommand{\vol}{\mathrm{vol}}
\newcommand{\im}{\mathrm{im}}
\newcommand{\loc}{\mathcal{L}}
\newcommand{\M}{\mathcal{M}}
\newcommand{\tr}{\mathrm{tr}}
\newcommand{\Hom}{\mathrm{Hom}}
\newcommand{\ind}{\mathrm{ind}}
\newcommand{\g}{\mathfrak{g}}
\newcommand{\comp}{\mathfrak{k}}
\newcommand{\adele}{\mathbb{A}}
\newcommand{\finadele}{\mathbb{A}^{\mathrm{fin}}}
\newcommand{\R}{\mathbb{R}}
\newcommand{\C}{\mathbb{C}}
\newcommand{\PGL}{\mathbf{PGL}}
\newcommand{\GL}{\mathbf{GL}}
\newcommand{\MS}{\mathrm{MS}}
\newcommand{\RR}{\mathbb{R}}
\begin{document}

    \title{The Equivariant Cheeger-M\"{u}ller Theorem on Locally Symmetric Spaces}
    \author{Michael Lipnowski}
\address{Mathematics Department \\
Duke University, Box 90320 \\
Durham, NC 27708-0320, USA}
\email{malipnow@math.duke.edu}
\maketitle

\textcolor{blue}{What follows, from the abstract to Appendix \ref{nrtappendix} inclusive,  faithfully reflects the published version of the titular paper \footnote{ Lipnowski. \emph{The equivariant Cheeger-M\"{u}ller theorem on locally symmetric spaces}. Journal of the Institute of Mathematics of Jussieu  15 (2016), no. 1, 165-202.}.  Some minor corrections are noted in-text in blue.  More serious corrections, clarifications, and improvements are addressed in Appendix \ref{corrections}.  To the best of our knowledge, Corollary \ref{twistedrtlss} was unaffected by the shortcomings of the published version.  Corollary \ref{twistedatequalstwistedrt} is considerably streamlined by the results of Appendix \ref{errortermzero}; the improvements of Appendix \ref{errortermzero} render most of \S \ref{dRecmt} and \S \ref{product} unnecessary.  As Appendix \ref{errortermzero} explains, the key equality
\begin{equation*}
\text{equivariant analytic torsion} = \text{equivariant Reidemeister torsion}
\end{equation*}
holds \emph{much more generally} than under the hypotheses of Corollary \ref{twistedatequalstwistedrt}.  These improvements became possible upon understanding the error term in the Bismut-Zhang formula (stated in Theorem \ref{BZ}) more satisfactorily.  Readers interested in Corollary \ref{twistedatequalstwistedrt} can read Appendix \ref{errortermzero} in consultation with \S \ref{statementofBZ}.}

\begin{abstract}
In this paper, we provide a concrete interpretation of equivariant Reidemeister torsion and demonstrate that Bismut-Zhang's equivariant Cheeger-M\"{u}ller theorem simplifies considerably when applied to locally symmetric spaces.  In a companion paper, this allows us to extend recent results on torsion cohomology growth and torsion cohomology comparison for arithmetic locally symmetric spaces an equivariant setting. 
\end{abstract}

\setcounter{section}{-1}

\tableofcontents

\section{Introduction}
     
The contents of this introduction are as follows:

\begin{itemize}
\item
In $\S \ref{numericalexamples1},$ we exhibit two numerical examples of torsion in the cohomology of (arithmetic) locally symmetric spaces and describe how the Cheeger-M\"{u}ller theorem explains both examples.  The desire to prove equivariant analogues of the phenomena underlying these examples necessitates the analysis of equivariant Reidemeister torsion undertaken in this paper.

\item
In $\S \ref{twistedbvcv},$ we describe how an equivariant analogue of the Cheeger-M\"{u}ller theorem, together with the trace formula comparison of \cite{Lip2}, can in principle be used to prove equivariant analogues of the phenomenon discussed in $\S \ref{numericalexamples1}.$

\item
In $\S \ref{difficulties},$ we describe difficulties which must be resolved in order for the equivariant Cheeger-M\"{u}ller theorem to be used effectively for implementing the strategy outlined in $\S \ref{twistedbvcv}.$  

\item
In $\S \ref{maininformal},$ we state the main results of this paper, which resolve the difficulties highlighted in $\S \ref{difficulties}.$    

\item
In $\S \ref{outline},$ we outline the contents of this paper.    

\item
In $\S \ref{commonnotation1},$ we consolidate notation to be used regularly throughout.  
\end{itemize}

\subsection{The Cheeger-M\"{u}ller theorem applied to number theory}
\label{numericalexamples1}

Bergeron-Venkatesh \cite{BV} and Calegari-Venkatesh \cite{CV} have recently applied the Cheeger-M\"{u}ller to prove striking results about torsion in the cohomology of arithmetic locally symmetric spaces.  

We present two numerical examples of torsion in the cohomology of arithmetic hyperbolic 3-manifolds exhibiting behavior suggested by their results.  We then explain how the Cheeger-M\"{u}ller theorem sheds light on both examples.  

This section serves to motivate equivariant analogues of the results from \cite{BV} \cite{CV}; these are resolved in the case of ``cyclic base change for quaternion algebras" in \cite{Lip2} with the help of the results proven in this paper.         

\subsubsection{Growth of torsion in cohomology}
In the ring of integers $\mathbb{Z}[\sqrt{-2}],$ let $\mathfrak{p}$ be a prime ideal of residue degree 1 and norm $4969.$  Consider the congruence subgroup
$$\Gamma_0(\mathfrak{p}) =  \left\{ \gamma =  \left(    \begin{array}{cc}
a & b \\
c & d  
\end{array} \right) \in \PGL_2(\mathbb{Z}[\sqrt{-2}]) :  \mathfrak{p} \text{ divides } c \right\}.$$
Seng\"{u}n \cite{Sen} has computed
\begin{eqnarray*}
\Gamma_0(\mathfrak{p})^{\textrm{ab}} &\cong& \left( \mathbb{Z} / 2728733329370698225919458399 \right) \\ 
&\oplus& \left( \mathbb{Z} / 114525595847400940348788195788260381871 \right) \oplus ...
\end{eqnarray*}
where the above two large integers are prime.  Furthermore, the data computed in \cite{Sen} suggest that this behavior among congruence subgroups of $\PGL_2(\mathbb{Z}[\sqrt{-2}])$ is in no way anomalous.  


To highlight a connection with the next section, we note in passing that $\PGL_2(\mathbb{Z}[\sqrt{-2}])$ acts almost freely on hyperbolic three space $\mathbb{H}^3,$ and we can make the identification
$$\Gamma_0(\mathfrak{p})^{\textrm{ab}} = H_1(M, \mathbb{Z}), M = \Gamma_0(\mathfrak{p}) \backslash \mathbb{H}^3.$$
$M$ is a cusped, hyperbolic 3-orbifold.

\subsubsection{A relationship between cohomology groups on two incommensurable hyperbolic three manifolds}
\label{balls}

Let $F$ be the cubic field $\mathbb{Q}(\theta), \theta^3 - \theta + 1 = 0,$ which has one real place $\infty.$  Let $\mathfrak{p}_5$ and $\mathfrak{p}_7$ denote the unique prime ideals of $O_F$ of norm $5$ and $7.$ \smallskip

Let $D$ denote the quaternion algebra over $F$ ramified at $\mathfrak{p}_5, \infty$ and $D'$ the quaternion algebra over $F$ ramified at $\mathfrak{p}_7,\infty.$  We let 
$\Gamma(\mathfrak{n})$ denote the norm 1 units of $D^{\times}$ lying in the Eichler order of $D$ of level $\mathfrak{n}$ and $\Gamma'(\mathfrak{n})$ the norm 1 units lying in the Eichler order of $D'$ of level $\mathfrak{n}.$  We consider the compact hyperbolic 3-orbifolds
$$W(\mathfrak{n}) = \Gamma(\mathfrak{n}) \backslash \mathbb{H}^3, W'(\mathfrak{n}) = \Gamma'(\mathfrak{n}) \backslash \mathbb{H}^3.$$ 
Letting $\mathfrak{n}_{5049}$ denote the unique ideal of $O_F$ of norm $5049,$ one can compute (see \cite[$\S 1.2$]{CV}) that if $S$ is divisible by all prime numbers less than $40,$ then
$$H_1(W(\mathfrak{p}_7 \mathfrak{n}_{5049}), \mathbb{Z} [S^{-1}]) = (\mathbb{Z} / 43)^4 \oplus (\mathbb{Z} / 61)^2 \oplus (\mathbb{Z}/ 127) \oplus (\mathbb{Z} / 139)^2 \oplus (\mathbb{Z} / 181) \oplus (\mathbb{Z} [S^{-1}])^{81} \oplus (\mathbb{Z} / 67)^2$$
$$H_1(W'(\mathfrak{p}_5 \mathfrak{n}_{5049}), \mathbb{Z}[S^{-1}]) =  (\mathbb{Z} / 43)^4 \oplus (\mathbb{Z} / 61)^2 \oplus (\mathbb{Z}/ 127) \oplus (\mathbb{Z} / 139)^2 \oplus (\mathbb{Z} / 181) \oplus (\mathbb{Z}[S^{-1}])^{113}.$$
This is striking because the groups $\Gamma(\mathfrak{p}_7 \mathfrak{n}_{5049})$ and $\Gamma'(\mathfrak{p}_5 \mathfrak{n}_{5049})$ are incommensurable; there is no natural map between $W(\mathfrak{p}_7 \mathfrak{n}_{5049})$ and $W'(\mathfrak{p}_5 \mathfrak{n}_{5049}).$

\subsubsection{The Cheeger-M\"{u}ller theorem: a common thread}
\label{cmmotivation}

Ray and Singer \cite{RS} made the amazing discovery that Reidemeister torsion $RT(M,F)$ (see \cite[$\S 1$]{Ch} or $\S \ref{rtdefn}$ for the definition) of orthogonally flat local systems $F \rightarrow M$ of real vector spaces can be computed by analytic means.  Cheeger \cite{Ch} and M\"{u}ller \cite{Mu1} independently later proved that for a compact Riemannian manifold $M,$  
$$RT(M,F) = \tau(M,F)$$
%
where $\tau(M,F)$ is the analytic torsion of $F$ (see \cite[$\S 3$]{Ch} for the definition).  Analytic torsion is an invariant of the spectrum of the Laplace operators attached to $F \rightarrow M.$ More generally, M\"{u}ller later proved in  \cite{Mu2} that, for \emph{unimodular} local systems of real (resp. complex) vector spaces $L \rightarrow M$ equipped with a metric on $L,$
\begin{equation} \label{untwistedcm}
RT(M,L) = \tau(M,L). 
\end{equation}

It is an insight of Calegari-Venkatesh \cite{CV} (see (b) below) and Bergeron-Venkatesh \cite{BV} (see (a) below) that the Cheeger-M\"{u}ller theorem can be used to great effect in number theory in at least two different ways.  For these applications, it is crucial to observe that if $L \rightarrow M$ is a unimodular local system of free abelian groups equipped with a metric on $L_\R \rightarrow M,$ then
\begin{equation} \label{concretert}
RT(M,L) = \frac{|H^1(M,L)_{\mathrm{tors}}|| H^3(M,L)_{\mathrm{tors}} | \cdots}{|H^0(M,L)_{\mathrm{tors}}| |H^2(M,L)_{\mathrm{tors}}| \cdots} \times \frac{R^0(M,L) R^2(M,L) \cdots}{R^1(M,L)R^3(M,L) \cdots}
\end{equation}

where $R^i(M,L)$ equals the volume $H^i(M,L)_{\mathrm{free}}$ (see $\S \ref{examples},$ example (3)).
\medskip

\begin{itemize}
\item[(a)]
Let $\G$ be an algebraic group over $\mathbb{Q}.$  The real group $\G(\mathbb{R})$ is (essentially) the isometry group of a symmetric space $X$ of non-compact type.  Fix a rational representation $\rho: \G \rightarrow \GL(V);$ let $\mathcal{O} \subset V$ be a $\mathbb{Z}$-lattice and $\G(\mathbb{Z})$ its stabilizer.  As explained in \cite[\S 2]{Lip2},
for appropriate sequences of spaces $Y_n = \Gamma_n \backslash X$ with $\Gamma_n \subset \G(\mathbb{Q})$ an arithmetic subgroup, e.g. a subgroup of $\G(\mathbb{Z})$ cut out by finitely many congruence conditions, the representation $\rho$ gives rise to a compatible sequence $L_n \rightarrow Y_n$ of local systems of free $\mathbb{Z}$-modules.  Bergeron and Venkatesh are able to show that for appropriate representations $\rho,$ provided the injectivity radius of $Y_n$ approaches infinity, the spectral invariants $\log \tau(Y_n,L_n) / \vol(Y_n)$ converge to a spectral invariant $\tau^{(2)}(X,\rho_{\R})$ of the pair $(X, \rho_\R).$  By finding explicit examples $(X, \rho)$ where $\tau^{(2)}(X,\rho_{\R}) \neq 0,$ they prove that  
\begin{equation} \label{limitmultiplicity}
\log RT(Y_n,L_n) / \vol(Y_n) =  \log \tau(Y_n,L_n) / \vol(Y_n) \rightarrow \tau^{(2)}(X,\rho_\R) \neq 0
\end{equation}
as a consequence of the Cheger-M\"{u}ller theorem.  Restricting to representations $\rho$ for which $\log R^\bullet(Y_n, L_n) = 0$ provides examples for which torsion in the cohomology groups of $L_n \rightarrow Y_n$ grows exponentially with the volume.\smallskip  

Hyperbolic three space $\mathbb{H}^3$ happens to satisfy $\log \tau(\mathbb{H}^3, \rho_\R) \neq 0$ for every $\rho.$  Though the proof of Bergeron-Venkatesh does not apply to the trivial local system (see \cite[$\S 4$]{BV}, especially the discussion of ``strongly acyclic" representations), the fact that $H_1(\Gamma_0(\mathfrak{p}) \backslash \mathbb{H}^3, \mathbb{Z})_{\mathrm{tors}}$ is so large when the norm of  $\mathfrak{p}$ equals $4969$ is a shadow of \eqref{limitmultiplicity}, which is still expected to be true.

\item[(b)]
Though the hyperbolic 3-orbifolds $M = W(\mathfrak{p}_7 \mathfrak{n}_{5049})$ and $N = W'(\mathfrak{p}_5 \mathfrak{n}_{5049})$ are incommensurable, they have very closely related length spectra.  Using the trace formula, one can show that correspondingly the spectra of the Laplace operators of $M$ and $N$ are very closely related; the definitive generalization of this spectral comparison was proven by Jacquet and Langlands in \cite{JL}.

Calegari and Venkatesh use this fact to relate $\tau(M, \underline{\mathbb{Z}})$ and $\tau(N, \underline{\mathbb{Z}}).$  In conjunction with the Cheeger-M\"{u}ller theorem, this implies a very close relationship between the sizes of $H_1(M, \mathbb{Z})_{\mathrm{tors}}$ and $H_1(N, \mathbb{Z})_{\mathrm{tors}}.$  Proving generalizations of the second computational example in this manner forms the content of their book \cite{CV}.
\end{itemize}

\subsection{Motivation: cohomology growth and cohomology relationships in the equivariant setting}
\label{twistedbvcv}

Let $D$ be any quaternion algebra over a number field $F$ with $a$ real places and $b$ complex places.  Let $E/F$ be a cyclic Galois extension of prime degree $p$ with Galois group $\Galois = \langle \sigma \rangle.$  Let $\Gamma$ be an appropriately chosen congruence subgroup of $D^{\times}/F^{\times}, \Gamma'$ a carefully chosen Galois-stable ``matching" subgroup of $D_E^{\times} / E^{\times},$ and $L$ and $\loc$ be ``matching local systems" (see \cite[\S 4.3]{Lip2}). 
The way in which $\Gamma$ and $\Gamma'$ must be related is the subject of \cite[\S 5]{Lip2}, \cite[\S 6]{Lip2}.
The local systems $\loc, L$ are described in \cite[\S 7.1]{Lip2}. 
Notably, they are local systems of free $O_N$-modules, for an appropriate finite extension $N/F,$ for which $L_N, \loc_N$ are acyclic.  Let $M = \Gamma \backslash (\mathbb{H}^2)^a \times (\mathbb{H}^3)^b$ and let $\M = \Gamma' \backslash (\mathbb{H}^2)^{ap} \times (\mathbb{H}^3)^{bp}.$  The local system $\loc \rightarrow \M$ is equivariant for the natural $\Galois$ action. \smallskip

In the companion paper \cite{Lip2}, we use trace formula methods to prove an identity of the shape 
\begin{equation} \label{spectralcomparison}
\tau_{\sigma}(\M, \loc_\iota) = \tau(M,L_\iota)^p.
\end{equation}
for each embedding $\iota: N \hookrightarrow \C.$  In this equation, $\tau_{\sigma}(\M, \loc_{\iota})$ denotes equivariant analytic torsion (see Definition \ref{defntwistedrt}); $\tau_{\sigma}(\M, \loc_{\iota})$ is a spectral invariant of $\loc_{\iota}$ together with the action of $\langle \sigma \rangle.$  One hopes that, by an equivariant analogue of the Cheeger-M\"{u}ller theorem, \eqref{spectralcomparison} will yield
\begin{equation} \label{rtcomparison} 
RT_{\sigma}(\M, \loc_{\iota}) = RT(M,L_{\iota})^p,
\end{equation}
where $RT_{\sigma}(M,L_{\iota})$ denotes equivariant Reidemeister torsion (see Definition \ref{morsesmalert}).  Let us assume the validity of \eqref{rtcomparison}.  Combining \eqref{spectralcomparison} and \eqref{rtcomparison} with known results on the growth of $\tau(M,L_{\iota})$ (see \cite{BV}) would yield growth results for $RT_{\sigma}(\M, \loc_\iota).$  An equivariant analogue of \eqref{concretert} could be directly combined with \eqref{rtcomparison} to yield a numerical comparison of torsion cohomology.

\subsection{Difficulties with the equivariant Cheeger-M\"{u}ller theorem}
\label{difficulties}
Lott-Rothenberg in  \cite{LR} and L\"{u}ck in \cite{Lu} have proven (see Theorem \ref{luck}) that
$$\tau_{\sigma}(\M, \loc_{\iota}) = RT_{\sigma}(\M,\loc_{\iota})$$
for \emph{unitarily flat} equivariant local systems $\loc_{\iota} \rightarrow \M.$  However, other than the trivial local system, the most natural local systems $\loc$ on spaces such as $\Gamma' \backslash (\mathbb{H}^2)^{ap} \times (\mathbb{H}^3)^{bp}$ are not unitarily flat.  Those which are unitarily flat are not acyclic.  

In the interest of generalizing the torsion growth theorem (a) and the numerical torsion comparison (b) discussed in $\S \ref{cmmotivation},$ we are very fortunate to have the Bismut-Zhang formula available; this is discussed at length in $\S \ref{dRecmt}.$  Bismut and Zhang in \cite{BZ2} prove that 
$$\log \tau_{\sigma}(\M, \loc_\iota) - \log RT_{\sigma}(\M, \loc_\iota) = E(\M, \loc),$$
where $E(\M,\loc)$ is an error term which localizes to the fixed point set $\M_{\sigma}.$  The expression for $E$ from \cite{BZ2} is unfortunately not explicit enough for the aforementioned growth and numerical comparison applications.  

In addition to the question of whether $E(\M,\loc) = 0,$ there is a futher issue of interpreting ``what $RT_{\sigma}(\M,\loc)$ is."  For the applications sketched in $\S \ref{twistedbvcv},$ we require an analogue of \eqref{concretert} giving a concrete interpretation for Reidemeister torsion in terms of sizes of torsion cohomology groups.

The main purpose of this paper is to resolve these two issues.

\subsection{Statement of main results}
\label{maininformal}
We quickly set some notation to state our first main result.
Let $p$ be prime.  Let $P(x) = x^{p-1} + x^{p-2} + ... + 1,$ the $p$-cyclotomic polynomial.  

Let $R$ be a commutative ring.  For any $R$-module $A$ acted on $R$-linearly by $\langle \sigma \rangle \cong \mathbb{Z} / p\mathbb{Z}$ and any polynomial $h \in R[x],$ let 
$$A^{h(\sigma)} := \{ a \in A: h(\sigma) \cdot a = 0 \}.$$ 
\newtheorem*{concretetwistedrt}{Corollary \ref{twistedrtlss} (concrete interpretation of twisted Reidemeister torsion)}
\begin{concretetwistedrt}
Let $\loc \rightarrow M$ be a rationally acyclic, metrized, unimodular local system $\loc \rightarrow \M$ of free abelian groups acted on isometrically by $\langle \sigma \rangle \cong \mathbb{Z} / p\mathbb{Z}.$  Suppose that the fixed point set $\M_{\sigma}$ has Euler characteristic 0.  Then
\begin{eqnarray*}
\log RT_{\sigma}(\M,\loc) &=&  \textcolor{blue}{- \sum (-1)^i \left( \log \left|H^{i}(\M, \loc)[p^{-1}]^{\sigma - 1} \right|  -  \frac{1}{p-1} \log \left|H^{i}(\M, \loc)[p^{-1}]^{P(\sigma)} \right| \right)} \\
&+& O\left( \log|H^{*}(\mathcal{M}, \loc)[p^{\infty}]| +\log |H^{*}(\mathcal{M}, \loc_{\mathbb{F}_p})| + \log |H^{*}(M, \loc_{\mathbb{F}_p})| \right)
\end{eqnarray*}
\end{concretetwistedrt}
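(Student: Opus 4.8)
The plan is to relate equivariant Reidemeister torsion to a purely algebraic invariant of the $\langle\sigma\rangle$-equivariant cochain complex $C^*(\M,\loc)$, and then to compute that invariant in terms of the orders of the $\sigma-1$ and $P(\sigma)$ pieces of cohomology, with controlled error. First I would recall that $RT_\sigma(\M,\loc)$ can be computed from any finite $\langle\sigma\rangle$-equivariant cochain complex of free $\mathbb{Z}[\sigma]$-modules computing $C^*(\M,\loc)$ — this is the content of the combinatorial/Morse-theoretic definition (Definition \ref{morsesmalert}), and the hypothesis $\chi(\M_\sigma)=0$ is exactly what guarantees the relevant triangulation/Morse data is balanced so that the torsion is well-defined independent of the auxiliary choices (volume terms $R^\bullet$ cancel by rational acyclicity, as in \eqref{concretert}). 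So the problem reduces to: given a bounded complex $C^\bullet$ of finitely generated free $\mathbb{Z}[\mathbb{Z}/p]$-modules with $H^*(C^\bullet\otimes\mathbb{Q})=0$, express $\log RT_\sigma$ of the complex in terms of $|H^*(C^\bullet)[p^{-1}]^{\sigma-1}|$ and $|H^*(C^\bullet)[p^{-1}]^{P(\sigma)}|$.

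The key algebraic step is to work prime-by-prime. Away from $p$, the group ring $\mathbb{Z}[\mathbb{Z}/p][p^{-1}]$ splits as $\mathbb{Z}[p^{-1}]\times\mathbb{Z}[\zeta_p][p^{-1}]$ (via $x\mapsto 1$ and $x\mapsto\zeta_p$), corresponding to the idempotent decomposition into the $P(\sigma)=0$ part and the $\sigma-1=0$ part. Equivariant Reidemeister torsion over $\mathbb{Z}[\mathbb{Z}/p]$ decomposes accordingly into a torsion over $\mathbb{Z}[p^{-1}]$ and a torsion over $\mathbb{Z}[\zeta_p][p^{-1}]$; on each factor one is computing ordinary Reidemeister torsion of an acyclic complex of free modules over a PID-like ring (after inverting $p$), which by the standard computation equals the alternating product of orders of the cohomology of the complex, i.e. $\prod_i |H^i(C^\bullet[p^{-1}]^{\sigma-1})|^{(-1)^i}$ on the $\zeta_p$-factor and similarly on the trivial factor. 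The twist by the representation $\sigma$ in the definition of $RT_\sigma$ (as opposed to untwisted $RT$) produces the weighting $1$ on the $\sigma-1$-eigenspace and $-\tfrac{1}{p-1}$ on the $P(\sigma)$-eigenspace — concretely because $\tr(\sigma \mid \text{trivial rep}) = 1$ while $\tr(\sigma\mid \mathbb{Q}(\zeta_p)) = -1$, and the Reidemeister torsion weighted by a character picks up these traces; normalizing, one divides the $P(\sigma)$-contribution by $[\mathbb{Q}(\zeta_p):\mathbb{Q}]=p-1$. This gives the main term
\[
-\Big(\log\big|H^*(\M,\loc)[p^{-1}]^{\sigma-1}\big| - \tfrac{1}{p-1}\log\big|H^*(\M,\loc)[p^{-1}]^{P(\sigma)}\big|\Big),
\]
where as usual $\log|H^*|$ abbreviates the alternating sum $\sum_i(-1)^i\log|H^i|$ (signs absorbed so that the formula reads as stated).

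The error analysis is where I expect the real work to lie. After inverting $p$ everything is clean, but $RT_\sigma$ is genuinely defined over $\mathbb{Z}[\mathbb{Z}/p]$, which is \emph{not} a product of Dedekind domains — it has the conductor-square singularity gluing $\mathbb{Z}$ and $\mathbb{Z}[\zeta_p]$ along $\mathbb{F}_p$. Passing from the integral complex to its localization at $p$ and to its reduction mod $p$ introduces discrepancies between $RT_\sigma$ and the product of the two away-from-$p$ torsions; the comparison is governed by a Mayer–Vietoris / arithmetic-square long exact sequence whose terms are built from $H^*(\M,\loc)[p^\infty]$, $H^*(\M,\loc_{\mathbb{F}_p})$, and $H^*(M,\loc_{\mathbb{F}_p})$ (the last entering because the fixed-point contribution $\M_\sigma$, whose $\mathbb{F}_p$-cohomology is controlled by that of $M$ via the localization theorem in equivariant cohomology, feeds into the torsion through the non-free part of the complex). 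The strategy for bounding the error is: (i) replace $C^\bullet$ by a minimal free resolution / use the structure theorem for $\mathbb{Z}_p[\mathbb{Z}/p]$-modules, whose indecomposables are classified, to write $C^\bullet$ up to quasi-isomorphism in a normal form; (ii) bound, term by term, the multiplicative defect between the integral torsion and the $\zeta_p$-torsion times the trivial-factor torsion by the orders of the $p$-power torsion and the mod-$p$ cohomology groups appearing in the arithmetic square; (iii) absorb the fixed-point contribution, using $\chi(\M_\sigma)=0$ to kill the leading term and $H^*(M,\loc_{\mathbb{F}_p})$ to bound the rest. The main obstacle is precisely this step (ii)–(iii): making the $O(\cdot)$ bound in terms of \emph{exactly} those three cohomology groups, and no others, requires care about how the non-projective part of a $\mathbb{Z}[\mathbb{Z}/p]$-complex contributes to equivariant torsion, and about transporting the fixed-point data through the Smith sequence relating $H^*(M,\loc_{\mathbb{F}_p})$ and $H^*(\M,\loc_{\mathbb{F}_p})$. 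I would organize this as a separate lemma on the "integral $p$-defect" of equivariant Reidemeister torsion, prove it for $\mathbb{Z}[\mathbb{Z}/p]$-complexes in the abstract, and then apply it to the geometric complex $C^*(\M,\loc)$.
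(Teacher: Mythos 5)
Your high-level plan matches the paper's in outline — reduce to the cochain complex $C^{\bullet}(\M,\loc;K)$, split the answer into a piece away from $p$ (which produces the main term via the $\sigma-1$ and $P(\sigma)$ isotypic decomposition) and a $p$-local defect (which produces the $O(\cdot)$ term) — but there is a genuine gap in the error analysis, and the role of $\chi(\M_{\sigma})=0$ is misdescribed. On the latter point: $RT_{\sigma}$ is perfectly well-defined without any Euler characteristic hypothesis; what $\chi(\M_{\sigma})=0$ actually does, via Proposition \ref{rtvsnrt}, is kill the discrepancy constant $e\cdot\chi(\M_{\sigma})$ that arises when you compare the Hermitian norm $h$ on $A^{\bullet}[\sigma-1]$ and $A^{\bullet}[P(\sigma)]$ (used in defining $RT_{\sigma}$) to the integral norm $\alpha_{\mathbb{Z}}$ (used to relate torsion to orders of cohomology). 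That comparison is exactly where unimodularity of $\loc$ and of $\loc[\sigma-1],\ \loc[P(\sigma)]$ over $\M_{\sigma}$ is consumed; your proposal never articulates why a single constant $e$ (independent of the cell $C\subset\M_{\sigma}$) controls the discrepancy, nor why it multiplies $\chi(\M_{\sigma})$.

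On the error analysis: you correctly anticipate that the nonprojective part of $C^{\bullet}(\M,\loc)$ as a $\mathbb{Z}[\mathbb{Z}/p]$-complex is the source of the $p$-local defect, but you leave the bound as an acknowledged open step and gesture at several alternative tools (arithmetic square/Mayer--Vietoris over the conductor square, minimal free resolutions over $\mathbb{Z}_p[\mathbb{Z}/p]$, localization theorem, Smith sequences) without carrying any of them through. The paper has a specific and rather elementary route. It introduces the intermediate quantity $NRT_{\sigma}$ (Definition \ref{nrt}), proves $RT_{\sigma}=NRT_{\sigma}-\sum{}^{*}\log|H^i(A'^{\bullet})|+e\cdot\chi(\M_{\sigma})$ (Proposition \ref{rtvsnrt}), geometrically identifies the ``non-free part'' $H^{*}(A'^{\bullet})$ with $H^{*}_c\bigl((\M-M)/\langle\sigma\rangle,\loc_{\mathbb{F}_p}\bigr)$ (Proposition \ref{geominterpretation}), and then bounds that by the Serre spectral sequence for the Borel fibration over $B\langle\sigma\rangle$ together with the long exact sequence of the pair $(\M,M)$ (Proposition \ref{finitecomplexestimate}). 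That chain is what produces exactly the three groups $H^{*}(\M,\loc)[p^{\infty}]$, $H^{*}(\M,\loc_{\mathbb{F}_p})$, $H^{*}(M,\loc_{\mathbb{F}_p})$ in the error; it is not clear that the tools you list would land on precisely those groups without essentially reconstructing the same spectral-sequence argument.
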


This is proven by relating the ``naive twisted Reidemeister torsion" $NRT_{\sigma}(C^{\bullet}(\M,\loc;K))$ (see Definition \ref{nrt}) of the cochain complex $C^{\bullet}(\M, \loc;K)$ to the twisted Reidemeister torsion of $(\M, \loc)$ (see Proposition \ref{rtvsnrt}) and then using a spectral sequence argument to relate $NRT_{\sigma}$ to the cohomology of $H^\bullet(\M, \loc)$ (see Proposition \ref{finitecomplexestimate}) in a ``triangulation independent" manner.

The next theorem describes circumstances under which twisted Reidemeister torsion equals twisted analytic torsion.  

\newtheorem*{atequalsrt}{Corollary \ref{twistedatequalstwistedrt} ($\tau_{\sigma}$ often equals $RT_{\sigma}$)}

\begin{atequalsrt}
Let $\loc \rightarrow \M$ be an equivariant, metrized, rationally acyclic local system of free abelian groups over a locally symmetric space $\M$ acted on equivariantly and isometrically by $\langle \sigma \rangle \cong \mathbb{Z} / p\mathbb{Z}.$  Suppose further that the restriction to the fixed point set $\loc|_{\M_{\sigma}} = L^{\otimes p}$ for $L \rightarrow \M_{\sigma}$ self-dual and that $\M_{\sigma}$ is odd dimensional.  Then 
$$\log \tau_{\sigma}(\M, \loc) = \log RT_{\sigma}(\M,\loc).$$
\end{atequalsrt}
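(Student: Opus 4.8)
The plan is to reduce the claim to the vanishing of the Bismut--Zhang error term. By the equivariant Cheeger--Müller theorem of \cite{BZ2},
\[
\log\tau_{\sigma}(\M,\loc)-\log RT_{\sigma}(\M,\loc)=E(\M,\loc),
\]
with $E(\M,\loc)$ a local quantity supported on the fixed point set $\M_{\sigma}$; so it is enough to prove $E(\M,\loc)=0$. I would begin by recording the structural facts peculiar to the locally symmetric setting: because $\M$ is locally symmetric and $\sigma$ acts isometrically, $\M_{\sigma}$ is totally geodesic, hence itself locally symmetric, and the Levi--Civita connection of $\M$ restricts to a block-diagonal connection on $T\M|_{\M_{\sigma}}=T\M_{\sigma}\oplus N$; the isometry $\sigma$ acts on the normal bundle $N$ fibrewise with eigenvalues the nontrivial $p$-th roots of unity, so $N$ is even-dimensional with an induced complex structure. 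The goal is then to make Bismut--Zhang's $E$ explicit in this situation: I expect it to take the form $\int_{\M_{\sigma}}\theta(\loc|_{\M_{\sigma}},g^{\loc})\wedge\Gamma$, where $\theta(\loc|_{\M_{\sigma}},g^{\loc})\in\bigoplus_{k\geq 0}\Omega^{2k+1}(\M_{\sigma})$ is the (closed) Bismut--Zhang form of the flat bundle $\loc|_{\M_{\sigma}}$ with its metric, and $\Gamma$ is an equivariant characteristic form of $(T\M|_{\M_{\sigma}},\sigma)$ assembled from the curvatures of $T\M_{\sigma}$ and $N$ together with the rotation angles of $\sigma$ on $N$, normalized so that $d\Gamma$ is a universal multiple of $e(T\M_{\sigma},\nabla^{\M_{\sigma}})\wedge\bigl(\text{equivariant Euler form of }(N,\sigma)\bigr)$ --- the equivariant counterpart of the fact that in the non-equivariant Cheeger--Müller theorem the correction is controlled by the Euler form $e(TM)$.

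Granting this description, the two hypotheses finish the proof cleanly. Since $\M_{\sigma}$ is odd-dimensional, $T\M_{\sigma}$ has odd rank, so its Euler form $e(T\M_{\sigma},\nabla^{\M_{\sigma}})$ --- a Pfaffian --- vanishes identically; hence $d\Gamma=0$ and $\Gamma$ is a closed form on $\M_{\sigma}$. On the other hand, the hypothesis $\loc|_{M_{\sigma}}=L^{\otimes p}$ with $L$ self-dual forces $\loc|_{\M_{\sigma}}\cong(\loc|_{\M_{\sigma}})^{\vee}$ as flat bundles; dualizing a flat bundle negates the de Rham cohomology class of its Bismut--Zhang form, so that class is its own negative in $H^{\mathrm{odd}}(\M_{\sigma};\R)$ and therefore vanishes, i.e. $\theta(\loc|_{\M_{\sigma}},g^{\loc})=d\eta$ for some even-degree form $\eta$. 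Since $\M_{\sigma}$ is closed, Stokes' theorem gives
\[
E(\M,\loc)=\int_{\M_{\sigma}}(d\eta)\wedge\Gamma=\pm\int_{\M_{\sigma}}\eta\wedge d\Gamma=0,
\]
and with the equivariant Cheeger--Müller identity this yields $\log\tau_{\sigma}(\M,\loc)=\log RT_{\sigma}(\M,\loc)$. (One could instead exploit that odd-dimensionality of $\M_{\sigma}$ also kills the metric anomalies of both $\tau_{\sigma}$ and $RT_{\sigma}$, reduce to the admissible metric on $L^{\otimes p}$ for which $\theta\equiv 0$ identically, and conclude directly; the same two hypotheses do the same two jobs either way.)

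The crux --- and the only genuinely hard step --- is the explicit identification of $E$ used above. Bismut--Zhang's formula in \cite{BZ2} is expressed through Mathai--Quillen currents and, as remarked in $\S\ref{difficulties}$, is not in usable closed form; the work is to restrict the ambient Mathai--Quillen data to the totally geodesic fixed point set, insert the normal rotation correctly, and --- the delicate part --- verify the parity bookkeeping showing that no contribution built solely from the even-dimensional normal geometry and independent of $\theta$ can survive, so that $E$ really is of the stated shape $\int_{\M_{\sigma}}\theta(\loc|_{\M_{\sigma}})\wedge\Gamma$ with $d\Gamma$ factoring through $e(T\M_{\sigma})$. It is precisely here that self-duality of $\loc|_{\M_{\sigma}}$, and not just odd-dimensionality of $\M_{\sigma}$, is required. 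A final, routine check is that the localization and Stokes arguments can be carried out $\langle\sigma\rangle$-equivariantly, i.e. that every differential form involved is $\langle\sigma\rangle$-invariant.
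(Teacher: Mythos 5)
Your reduction to the vanishing of the Bismut--Zhang error term $E(\M,\loc)$ is the right starting point, but the form you posit for $E$ is not what the Bismut--Zhang formula actually gives, and the gap this creates is where all the real work of the paper lives. Looking at Theorem~\ref{BZ}, the error has \emph{two} pieces: the integral $\int_{\M_{\sigma}}\theta_{\sigma}(\loc,h^{\loc})\wedge X^{*}\psi(T\M_{\sigma},\nabla^{T\M_{\sigma}})$, and a \emph{discrete} sum $\sum_{x\in\mathrm{Crit}(f)\cap\M_{\sigma}}(-1)^{\mathrm{ind}(f|_{\M_{\sigma}},x)}\sum_j\mathrm{ind}(f,N(\beta_j);x)\cdot C_j\cdot\mathrm{tr}[\sigma|\loc_x]$ over critical points of the Morse function on the fixed locus, with constants $C_j$ tied to equivariant torsion of spheres. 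Your proposal addresses only the first piece, and moreover over-engineers it: since $\loc$ is unimodular, Lemma~\ref{unimodular} gives $\theta_{\sigma}(\loc,h^{\loc})\equiv 0$ \emph{pointwise}, so the integral vanishes without any cohomology, Stokes, or self-duality. The Stokes argument you sketch is also technically suspect because $X^{*}\psi$ is a Mathai--Quillen \emph{current} (a transgression of the Euler class), not a closed smooth form playing the role of your $\Gamma$.

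The genuinely hard part is the discrete sum, and the ``parity bookkeeping'' you appeal to does not make it go away: it does not vanish for $(\M,\loc)$ in general. The paper's route is instead to show in Proposition~\ref{finalmariage} that this discrete error (and hence $E(\M,\loc)$ itself) equals the corresponding error $E(\M_{\sigma}^p,L^{\boxtimes p})$ for the cyclic-shift action on the $p$-fold product, by matching normal bundles (Proposition~\ref{isometric}), Morse functions, and local data near $\M_{\sigma}$, together with the anomaly formula to pass back to the group-invariant metric. Then Theorem~\ref{productcheegermuller} computes $E(\M_{\sigma}^p,L^{\boxtimes p})=0$ not by analyzing the BZ error at all, but by separately identifying $\log\tau_{\sigma}(L^{\boxtimes p})=p\log\tau(L)$ (Proposition~\ref{torsionproduct}) and $\log NRT_{\sigma}(L^{\boxtimes p})=p\log RT(L)$ (Lemma~\ref{twistedproductnrt}), invoking the untwisted Cheeger--M\"uller theorem, and then showing the leftover $\sum{}^{*}\log|H^i(D')|$ is $-\log p\cdot p\cdot[\chi(M,L_{\mathbb{F}_p})^p-\chi(M,L^{\otimes p}_{\mathbb{F}_p})]$, which vanishes by Poincar\'e duality because $M=\M_{\sigma}$ is odd-dimensional and $L_{\mathbb{F}_p}$ is self-dual. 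So the two hypotheses you flagged are indeed exactly what is needed, but they enter through Poincar\'e duality on the fixed-point set, not by making $\theta$ exact in cohomology. Unless you can supply an actual argument that the critical-point sum in Theorem~\ref{BZ} vanishes under your hypotheses --- which would be a substantial new claim --- the proposal as written has a hole at its center.
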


This theorem is proven in two steps:
\begin{itemize}
\item
Proposition \ref{finalmariage} compares the error terms of two different applications of the Bismut-Zhang formula \eqref{BZformula}, one for $\loc \rightarrow \M$ and one for $L^{\boxtimes p} \rightarrow \M_{\sigma}^p$ to prove that  
$$\log RT_{\sigma}(\M, \loc) - \log \tau_{\sigma}(\M, \loc) = \log RT_{\sigma}(\M_{\sigma}^p, L^{\boxtimes p}) - \log \tau_{\sigma}(\M_{\sigma}^p,L^{\boxtimes p}).$$
\item
Section \ref{product} proves that the latter difference
$$\log RT_{\sigma}(\M_{\sigma}^p, L^{\boxtimes p}) - \log \tau_{\sigma}(\M_{\sigma}^p,L^{\boxtimes p})$$
is zero by separately proving that 
$$\log \tau_{\sigma}(\M_{\sigma}^p, L^{\boxtimes p}) = p \log \tau(\M_{\sigma},L)$$
in Proposition \ref{torsionproduct},
$$\log RT_{\sigma}(\M_{\sigma}^p, L^{\boxtimes p}) = p \log RT(\M_{\sigma},L)$$
in Theorem \ref{productcheegermuller}, and then applying the untwisted Cheeger-M\"{u}ller theorem to conclude.
\end{itemize}

\subsection{Acknowledgements}

This paper is an outgrowth of the author's PhD thesis.  It owes its existence to the inspirational work of Bergeron-Venkatesh \cite{BV} and Calegari-Venkatesh \cite{CV}.  

The author thanks Jayce Getz, Les Saper, and Mark Stern for their helpful comments on drafts of this paper. 
 
The author would like to thank Nicolas Bergeron for many stimulating discussions on torsion growth and twisted endoscopy.  He would also like to thank Jean-Michel Bismut for patiently explaining his work with Zhang \cite{BZ2} on the equivariant Cheeger-M\"{u}ller theorem. 

Last but not least, the author would like to express his deep gratitude to his advisor, Akshay Venkatesh, for sharing so many of his ideas and for providing constant encouragement and support during the preparation of this work.

\subsection{Outline}
\label{outline}

\begin{itemize}

\item
In $\S \ref{chaincomplex},$ we recall the definition of twisted analytic torsion and twisted Reidemeister torsion of an (equivariant) metrized local systesm $\loc \rightarrow \M$ of $O_N$-modules over a compact Riemannian manifold, where $O_N$ is the ring of integers of a number field $N.$  In $\S \ref{geomtwistedrtorsion},$ we prove a ``compatibility with restriction of scalars" property of twisted Reidemeister torsion for equivariant local systems of $O_N$-modules, $N$ a number field.  This property is used to relate our main results, which concern comparisons of Reidemeister torsion for matching local systems $\loc \rightarrow \M$ and $L \rightarrow M$ of $O_N$-modules over different manifolds, to a relationship between sizes of cohomology groups 
(see \cite[\S 7.2]{Lip2}). \smallskip

In $\S \ref{luckcm},$ we state L\"{u}ck's version of the equivariant Cheeger-M\"{u}ller theorem from \cite{Lu}.  This theorem, along with the untwisted Cheeger-M\"{u}ller theorem for unimodular local systems proven in \cite{Mu2}, can be used in conjunction with the spectral comparison theorem proven in \cite[\S 4.4]{Lip2} 
to yield numerical cohomology comparisons.

\item
In $\S \ref{finitechaincomplex},$ we discuss a version of the twisted Cheeger-M\"{u}ller theorem valid for a finite, rationally acyclic metrized chain complex $A^{\bullet}$ of free abelian groups acted on isometrically by $\sigma$ with $\sigma^p = 1.$  This calculation naturally leads to the definition of ``naive equivariant Reidemeister torsion" (see Definition \ref{nrt}).   

\item
In $\S \ref{preliminaryestimate},$ in the case where $A^{\bullet} = C^{\bullet}(\M, \loc; K)$ is the group of $\loc$-valued cochains on $\M$ with respect to a fixed equivariant triangulation,  we carry through a spectral sequence argument to relate the naive Reidemeister torsion, up to a controlled error, to a quantity which is patently related to the structure of $H^{*}(\M, \loc)$ as a $\sigma$-module and which is independent of the triangulation $K.$  Finally, in $\S \ref{smalldifference},$ for $\loc \rightarrow \M$ an equivariant, unimodular local system over a locally symmetric space, we relate the naive twisted Reidemeister torsion of $C^{\bullet}(\M, \loc; K)$ to its twisted Reidemeister torsion. 

\item
In $\S \ref{dRecmt},$ we recall the statement of the equivariant Cheeger-M\"{u}ller theorem, proven by Bismut and Zhang.  Let $\loc \rightarrow \M$ be a $\sigma$-equivariant metrized local system.  The Bismut-Zhang formula enables us to prove Proposition $\ref{finalmariage},$ which shows that the difference between the twisted Reidemeister torsion and the twisted analytic torsion of a $\sigma$-equivariant local system $\loc \rightarrow \M$ equals the same difference for a $L^{\boxtimes p} \rightarrow (\M_{\sigma})^p,$ where $\sigma$ acts on the latter by cyclic shift.  This constitutes progress since the twisted analytic torsion and twisted Reidemeister torsion of a product are individually computed in $\S \ref{product}.$   

\item
In $\S \ref{product},$ we study the equivariant analytic torsion and equivariant Reidemeister torsion for a product.  We directly calculate both the equivariant analytic torsion of a metrized local system and the ``naive Reidemeister torsion" (see Definition \ref{nrt}) of the local system $L_{\rho}^{\boxtimes n} \rightarrow \mathcal{M} = M^n$ over the compact Riemannian manifold $M^n$ with respect to the cyclic shift $\sigma.$  In $\S \ref{smalldifferenceproduct},$ we show that for many unimodular metrized local systems $L \rightarrow M$ which are not necessarily unitarily flat, the conclusion of L\"{u}ck's variant of the twisted Cheeger-M\"{u}ller theorem continues to hold for $L^{\boxtimes p} \rightarrow M^p$: its twisted Reidemeister torsion often equals its twisted analytic torsion.  
\end{itemize}

\subsection{Notation used throughout}
\label{commonnotation1}

This section compiles a list of frequently used notation.  The descriptions given are consistent with the most common usage of the corresponding symbols.  The reader should be warned, however, that within a given chapter or section, the below symbols might carry a slightly different meaning; such local changes of notation will be made clear as necessary.   
\begin{itemize}
\item
$L \rightarrow M$ denotes a local system of projective $O_F,F, \mathbb{Q}, \mathbb{Z}, \R,$ or $\C$-modules, depending on the context.

\item
$\loc \rightarrow \M$ denotes a local system equivariant for the action of a finite group $\Gamma,$ usually $\Gamma = \langle \sigma \rangle$ with $\sigma^p = 1.$

\item
$C^{\bullet}(M,L; K)$ denotes the complex of $L$-valued cochains on $M$ with respect to a triangulation $K$

\item
Let $f$ be a Morse function on a smooth closed manifold $M$ and $X$ a weakly gradient-like vector field with respect to $f.$  Let $\Phi_X$ denote the flow generated by $X.$  For every critical point $p$ of $X,$ we let $W^u(p)$ denote the unstable (ascending) manifold of $p$ and $W^s(p)$ denote the stable (descending) manifold of $p.$  These are respectively defiend as
\begin{align*}
W^u(p) &= \{m \in M: \lim_{t \rightarrow -\infty} \Phi_{X,t}(m) = p \} \\
W^s(p) &= \{m \in M: \lim_{t \rightarrow \infty} \Phi_{X,t}(m) = p \}.
\end{align*}
See \cite[\S 2.4]{Nic} for further discussion.   

\item
$\MS(X,L)$ denotes the Morse-Smale complex associated with a vector field $X$ on $M$ which is weakly gradient-like with respect to a fixed Morse function $f$ and which satisfies Morse-Smale transversality.  

\item
$RT(M, L)$ denotes the Reidemeister torsion of the cochain complex $C^{\bullet}(M,L;K)$ for a local system $L \rightarrow M$ provided the implicit volume forms and triangulation are understood.  $RT_{\sigma}(\M, \loc)$ denotes the twisted Reidemeister torsion (evaluated at $\sigma$) of the complex $C^{\bullet}(M,L;K)$ for a $\langle \sigma \rangle$-equivariant local system $\loc \rightarrow \M$ of free abelian groups.

\item
$RT(X, L)$ denotes the Reidemeister torsion of the Morse-Smale complex $\MS(X,L)$ for a vector field $X,$ satisfying Morse-Smale transversality, and a local system $L \rightarrow M,$ provided the Morse function $f$ and the implicit volume forms are understood.  $RT_{\sigma}(X,\loc)$ denotes the twisted Reidemesiter torsion of the Morse-Smale complex whenever $\loc \rightarrow \M$ is a $\langle \sigma \rangle$ equivariant local system.

\item
For an $R$-module $A$ acted on $R$-linearly by $\langle \sigma \rangle,$ we let $A^{\sigma - 1} := \{ a \in A: (\sigma - 1) \cdot a = 0 \}$ and $A^{P(\sigma)} = \{ a \in A: P(\sigma) \cdot a = 0 \}$ where $P(\sigma)$ denotes the $p$-cyclotomic polynomial $P(x) = x^{p-1} + x^{p-2} + ... + 1.$  Sometimes, we denote these by $A[\sigma - 1]$ and $A[P(\sigma)]$ as well.

\item
For an $R$-module $A$ acted on $R$-linearly by $\langle \sigma \rangle,$ we define $A' := A / (A[\sigma - 1] \oplus A[P(\sigma)]).$  Similarly, if $A^{\bullet}$ is a complex of $R$-modules acted on $R$-linearly by $\langle \sigma \rangle,$ we define $A'^{\bullet} :=  A^{\bullet} / (A^{\bullet}[\sigma - 1] \oplus A^{\bullet}[P(\sigma)]).$

\item
For a finite abelian group $B$ and a rational prime $p,$ we let $B[p^{\infty}]$ and $B[p^{-1}]$ respectively denote the $p$-primary subgroup of $B$ and the prime to $p$ subgroup  of $B.$  These are canonically isomorphic to $B \otimes_{\mathbb{Z}} \mathbb{Z}_{(p)}$ and $B \otimes_\mathbb{Z} \mathbb{Z}[p^{-1}]$ respectively. 

\item
$\sum {}^{*}, \prod {}^{*}, \otimes {}^{*}$ respectively denote alternating sum, product, and tensor product.  The alternating tensor product of $R$-modules $M_1,...,M_n$ means $M_1 \otimes_R M_2^{*} \otimes_R M_3 \otimes ...$

\item 
For a projective $R$-module $M,$ we let $\det(M)$ denote $\wedge_R^{\mathrm{top}} M.$  For a complex $M^{\bullet} := 0 \rightarrow M_0 \rightarrow M_1 \rightarrow \cdots$ of projective $R$-modules, its determinant is defined to be $\det(M) := \otimes {}^{*} \det(M_i).$ 
\end{itemize}

\section{The equivariant Cheeger-M\"{u}ller theorem}
\label{chaincomplex}



\begin{itemize}
\item
In $\S \ref{twistedAtorsion},$ we define equivariant analytic torsion of a metrized local system $\loc \rightarrow \M$ of $\C$-vector spaces acted on equivariantly by isometries by a finite group $\Gamma.$

\item
In $\S \ref{rtdefn},$ we define the Reidemeister torsion of a complex $A^{\bullet}$ of $K$-vector spaces equipped with volume forms on the chain groups $A^{\bullet}$ and the cohomology groups $H^{*}(A^{\bullet}),$ where $K$ is any field.  

\item
In $\S \ref{twistedRtorsion},$ we define the twisted Reidemeister torsion of an abstract metrized complex of $\C$-vector spaces.

\item
In $\S \ref{morsesmale},$ we define the Morse smale complex associated to a local system $\loc \rightarrow X$ together with a Morse function $f: X \rightarrow \R.$

\item
In $\S \ref{geomtwistedrtorsionms}$ and $\S \ref{geomtwistedrtorsion},$ we explain how given some auxiliary volume forms associated with a $\langle \sigma \rangle$-equivariant unimodular local system of $N$-vector spaces $\loc \rightarrow \M,$ for any field $N,$ allows us to define an $N^{\times}$-valued version of equivariant Reidemeister torsion.  In particular, for $N$ a number field, we prove in $\S \ref{restrictionofscalars}$ a ``norm compatibility" between the equivariant Reidemeister torsion of the geometric complex $C^{\bullet}(\M, \loc ;K),$ where $\loc$ is a local system of $N$ vector spaces, and the same geometric complex $C^{\bullet}(\M, \loc_{\mathbb{Q}}; K)$ where $\loc_{\mathbb{Q}}$ denotes $\loc$ viewed as a local system of $\mathbb{Q}$-vector spaces.  

\item 
In $\S \ref{luckcm},$ we finally state the twisted Cheeger-M\"{u}ller theorem due to L\"{u}ck in \cite{Lu} and the untwisted Cheeger-M\"{u}ller theorem due to M\"{u}ller in \cite{Mu1}.
\end{itemize}

\subsection{Definition of twisted analytic torsion}
\label{twistedAtorsion}

Let $\mathcal{M}$ be a compact Riemannian manifold. 

\begin{defn}
A \emph{metrized local system}  $\loc \rightarrow \M,$ is a local system of free abelian groups equipped with a metric on $\loc_\R.$  The metric on $\loc_\R$ is \emph{not} required to be compatible with the flat structure on $\loc,$ which is to say that parallel transport need not be unitary.   
\end{defn}

\begin{defn}
If the parallel transport on $\loc_\R$ induced by the flat structure is unitary, we call $\loc \rightarrow \M$ \emph{unitarily flat}.
\end{defn}

Let $\Gamma$ be a group of finite order acting equivariantly on a metrized local system $\mathcal{L} \rightarrow \mathcal{M}$ by isometries.  Let $\Delta_{j,\mathcal{L}}$ denote the $j$-form Laplacian acting on $\Omega^j(\mathcal{M}, \mathcal{L}).$ \bigskip

Note that the $\lambda$-eigenspace of $\Delta_{j,\mathcal{L}},$ call it $E_{j,\mathcal{L},\lambda},$ is preserved under pullback by $\Gamma$ because $\Gamma$ acts by isometries; each $E_{j,\mathcal{L},\lambda}$ is a representation of $\Gamma.$  Let $\mathrm{Rep}(\Gamma)$ denote the representation ring of the finite group $\Gamma.$  We can form the $\mathrm{Rep}(\Gamma)_{\mathbb{C}}$-valued equivariant zeta functions
$$\zeta_{j,\mathcal{L},\Gamma}(s) = \sum \lambda^{-s} [E_{j, \mathcal{L},\lambda}] \in \mathrm{Rep}(\Gamma)_{\mathbb{C}}$$
where the sum ranges over the non-zero eigenvalues of $\Delta_{j,\mathcal{L}}.$  All of the functions $\zeta_{j,\mathcal{L},\sigma}$ admit meromorphic continuation to the entire complex plane and are holomorphic in a neighborhood of $s = 0$ (see \cite[Lemma 1.13]{Lu}).  Form the linear combination

$$Z_{\mathcal{L},\Gamma}(s) = \frac{1}{2} \sum_j (-1)^j j \cdot \zeta_{j,\mathcal{L},\Gamma}(s).$$

\begin{defn} \label{defntwistedrt}
The \emph{twisted (or equivariant) analytic torsion $\tau_{\Gamma}$}  is defined by
$$\tau_{\Gamma}(\mathcal{L}) = Z_{\mathcal{L}, \Gamma}'(0) \in \textrm{Rep}(\Gamma)_{\mathbb{C}}.$$
If $\Gamma = \langle \sigma \rangle$ is a cyclic group, we will often denote
$$\tau_{\sigma}(\mathcal{L}) := \tr \{\sigma | \tau_{\Gamma}(\mathcal{L}) \}.$$
\end{defn}

\textbf{Remarks}. 
\begin{itemize}
\item
If $\mathcal{L} = \mathbb{Z}_\M$ is the trivial local system, then we will sometimes label our generating functions with a subscript $\M$ instead of a subscript $\mathbb{Z}_{\M}.$ 

\item
If $\sigma = 1,$ we recover the usual untwisted torsion from this definition.
\end{itemize}

\subsection{Definition of Reidemeister torsion}
\label{rtdefn}

Let $K$ be any field.  We recall the definition of the determinant of a $K$-vector space or a complex of $K$-vector spaces.

\begin{defn} \label{det}
Let $V$ be a $K$-vector space of finite dimension $n.$  We define its determinant to be $\det(V) := \wedge^n V.$  If $A^{\bullet} = 0 \rightarrow A^0 \rightarrow ... \rightarrow A^n \rightarrow 0$ is a finite complex of finite dimensional $K$-vector spaces, we define 
$$\det(A^\bullet) := \bigotimes {}^{*} \det(A^{\bullet}) := \bigotimes_{i=0}^n \det(A^i)^{(-1)^i},$$
where the superscript $-1$ denotes $K$-dual.  In words, this is the ``alternating tensor product" of the determinants of the constituents of the complex $A^{\bullet}.$
\end{defn}

Let $(A^{\bullet}, d^{\bullet})$ be a bounded complex of finite dimensional $K$-vector spaces.  Let $B^{\bullet}, Z^{\bullet}, H^{\bullet}$ be the associated complexes of coboundaries, cocycles, and cohomology.  Suppose that $A^i, B^i, Z^i, \text{ and } H^i$ have respective dimensions $a_i ,b_i, z_i, h_i.$  Suppose further that each group $A^{\bullet}, H^{*}(A^{\bullet})$ is equipped with a volume form, i.e. there are given volume forms $\mu_i \in \left( \wedge^{h_i} H^i(A^{\bullet}) \right)^{*}, \omega_i \in \left( \wedge^{a_i} A^i \right)^{*}.$  There is a canonical isomorphism
\begin{equation} \label{detofcomplex}
\det(A^{\bullet}) \otimes_K \det( H^i(A^{\bullet}) )^{-1} \cong K
\end{equation}
(see \cite[$\S 1,$ Proposition 1]{KM}).  Let the section $s_{A^{\bullet}}$ of $\det(A^{\bullet})^{-1} \otimes_K \det( H^{*}(A^{\bullet}) )$ be the preimage of 1 under the isomorphism of (\ref{detofcomplex}).  

\begin{defn}[Reidemeister torsion through volume forms] \label{rtvolumeforms}
The \emph{Reidemeister torsion} $RT(A^{\bullet}, \omega, \mu) \in K^{\times}$ is the value of $\bigotimes {}^{*} \omega_i \otimes \bigotimes {}^{*} \mu_i^{-1}$ evaluated on the section $s_{A^{\bullet}}$ of $\det(A^{\bullet})^{-1} \otimes_K \det(H^{*}(A^{\bullet}) ).$  \smallskip

The Reidemeister torsion of $A^{\bullet}$ can be computed as follows.  Choose $\rho_i \in \left( \wedge^{b_i} A^i \right)^{*}$ with $\rho_i |_{\wedge^{b_i} B^i} \neq 0$ and $\sigma_i \in \left( \wedge^{h_i} A^i \right)^{*}$ with $\sigma |_{\wedge^{h_i} Z^i} = \pi^{*}(\mu_i), \pi$ denoting the projection $Z^\bullet \rightarrow H^\bullet.$  Then

$$\rho_i \wedge d_{i+1}^{*}(\rho_{i+1}) \wedge \sigma_i = m_i \omega_i$$

for some $m_i \in K^{\times}.$  Then

$$RT(A^{\bullet}, \omega, \mu) = \prod {}^{*} m_i \in K^{\times}$$
\end{defn}

(cf. \cite[$\S 1$]{Ch}).

\subsubsection{Norms versus volume forms for complexes of $\mathbb{C}$-vector spaces}
\label{normsvsvolumeforms}

Let $A^{\bullet}$ be a finite complex of finite $\mathbb{C}$-vector spaces.  We introduce some terminology.

\begin{defn}[norms and volume forms]
Let $V$ be a $\C$-vector space of finite dimension $n.$  By a \emph{norm on $V$}, we mean a non-degenerate norm on the one dimensional complex vector space $\wedge^n V.$  A \emph{volume form} denotes a non-zero element of $(\wedge^n V)^*.$  The absolute value of a volume form on $V$ is a norm on $V.$    
\end{defn}

\begin{defn}[Reidemeister torsion through norms]
Suppose each $A^i$ is endowed with a norm $\alpha_i$ and each $H^i(A^{\bullet})$ is endowed with a norm $\beta_i.$  We can choose $\alpha_i' \in \left(\wedge^{\mathrm{top}} A^i \right)^{*}$ and $\beta_i' \in \left(\wedge^{\mathrm{top}} H^i(A^{\bullet}) \right)^{*}$ for which $|\alpha_i'| = \alpha_i, |\beta_i'| = \beta_i.$  We define

$$RT(A^{\bullet}, \alpha, \beta) = |RT(A^{\bullet}, \alpha', \beta')|$$
\end{defn}

\begin{rem}
The choice of $\alpha_i', \beta_i'$ is only ambiguous up to a complex number of absolute value 1, so the absolute value $|RT(A^{\bullet}, \alpha', \beta')|$ is independent of all choices. 
\end{rem}

\subsubsection{Some useful examples and properties of Reidemeister torsion}
\label{examples}

\begin{itemize}
\item[(1)]
Any finite-free abelian group $B$ gives rise to a canonical norm on $\det(B_{\mathbb{C}}),$ namely that which assigns $e_1 \wedge ... \wedge e_r$ norm 1 for any basis $e_1,...,e_r$ of $B.$  For any complex $A^{\bullet}$ of finite free abelian groups, we let $\alpha_{\mathbb{Z}}$ and $\beta_{\mathbb{Z}}$ denote these norms arising from the integral structure in this manner.  One computes that
$$RT(A^{\bullet}, \alpha_{\mathbb{Z}}, \beta_{\mathbb{Z}}) = \prod {}^{*} |H^i(A^{\bullet})_{\textrm{tors}} |$$ 
(see \cite[$\S 1$]{Ch}).
\item[(2)]
If $\beta'$ is a different choice of norms on $H^i(A^{\bullet}_{\mathbb{C}})$ with $\beta_i' = k_i \beta_i,$ then
\begin{equation} \label{changeofvolumeform}
RT(A^{\bullet},\alpha, \beta') = RT(A^{\bullet}, \alpha, \beta) \times \prod {}^{*} k_i.
\end{equation}
\item[(3)]
Suppose that $L \rightarrow M$ is a metrized local system of free $\mathbb{Z}$-modules over a compact Riemannian manifold ($M,g$). Let $A^{\bullet}$ denote the group of $L$-valued cochains on $M$ with respect to a fixed choice of triangulation; the cohomology of this chain complex computes $H^{\bullet}(M, L).$  The Riemannian metric induces metrics on each $H^i(A^{\bullet}) \cong H^i(M,L)$ via Hodge theory, and hence norms $\beta_{i,g}$ on the $\det(H^i(A^{\bullet})).$  In such geometric situations, we define 
$$RT(A^{\bullet}) := RT(A^{\bullet}, \alpha_{\mathbb{Z}}, \beta_g).$$
Note that $\beta_{i,g} = \vol(H^i(M,L_{\mathbb{R}}) / H^i(M,L)) \times \beta_{i, \mathbb{Z}}.$  So by equation (\ref{changeofvolumeform}),
$$RT(A^{\bullet}) = \prod {}^{*} |H^i(M,L)_{\textrm{tors}}| \times \prod {}^{*} \vol(H^i(M,L_{\mathbb{R}}) / H^i(M,L)).$$
\item[(4)]
In our imminent discussion of Reidemeister torsion, it will be necessary to think of Reidemeister torsion in terms of (hermitian) metrics on $A^{\bullet}.$  Any such metric $h$ induces a collection of norms $\alpha_h,$ where $\alpha_h^i$ is the norm that $h$ induces on $A^i \otimes \mathbb{C}.$  \medskip

Suppose that $A^{\bullet} = C^{\bullet}(M, L;K)$ for some triangulation $K$ of the manifold $M$ from $(3).$  The group of cochains is generated by ``indicator cochains" $\mathbf{1}_{C, s},$ i.e. those which assign some cell $C$ a global section $s$ of $L|_C.$  Assume that $L \rightarrow M$ is unitarily flat.  We define a metric $h_{\mathbb{Z}}$ on $A^{\bullet}$ by insisting that 
$$h_{\mathbb{Z}}(1_{C,s},1_{C,s}) = || s ||^2 \text{ and } h_{\mathbb{Z}}(\mathbf{1}_{C,s}, \mathbf{1}_{C',s'}) = 0 \text{ if } C, C' \text{ are distinct},$$
where for a section $s \in L(C),$ we define $||s|| = ||s_x||$ for any $x \in C.$  Because $L$ is unitarily flat, this is well-defined.  Provided that $|| e_1 \wedge ... \wedge e_n || = 1$ for every integral basis $e_1,...,e_n$ of $L(C)$ and every cell $C$ of the triangulation $K,$  
$$RT(A^{\bullet}, \alpha_{h_{\mathbb{Z}}}, \beta) = RT(A^{\bullet}, \alpha_{\mathbb{Z}}, \beta).$$
\end{itemize}

\subsection{Definition of twisted Reidemeister torsion for an abstract metrized complex $A^{\bullet}$}
\label{twistedRtorsion}

Let $A^{\bullet}$ be a finite complex of finite dimensional $\C$-vector spaces together with metrics $h_i$ on $A^{\bullet}$ and induced metrics $g_i$ on each $H^i(A^{\bullet}).$ 

\begin{defn}
Let $\Gamma$ be a finite group acting on $A^{\bullet}$ by isometries.  The \emph{equivariant Reidemeister torsion of} $A^{\bullet}$ is defined by the formula
$$\log RT_{\Gamma}(A^{\bullet}, h, g) := \sum_{\pi \in \widehat{\Gamma}} \frac{1}{\dim \pi} \log RT(A^{\bullet}[\pi], h|_{A^{\bullet}[\pi]},  g|_{H^i(A^{\bullet}[\pi])} ) \cdot \pi \in \mathrm{Rep}(\Gamma)_{\mathbb{C}},$$ 
where $A^{\bullet}[\pi]$ denotes the $\pi$-isotypic subcomplex of $A^{\bullet}$ and $\mathrm{Rep}(\Gamma)$ denotes the representation ring of $\Gamma.$ 
\end{defn}

\begin{rem}
The representation ring $\mathrm{Rep}(\Gamma)$ is free as a $\mathbb{Z}$-module with basis given by the isomorphism classes of irreducible complex representations of $\Gamma.$  The map assigning to each representation its character defines an isomorphism between $\mathrm{Rep}(\Gamma)_{\C}$ and the complex-valued class functions on $\Gamma.$  We can and will use this isomorphism to evaluate elements of $\mathrm{Rep}(\Gamma)_\C$ on conjugacy classes of $\Gamma.$  
\end{rem}

Let $N$ be a number field with ring of integers $O_N.$ 

\begin{defn} \label{firstdefnrt}
Let $\loc \rightarrow \M$ be a local system of $O_N$-modules and let $\iota: N \hookrightarrow \C$ be an embedding.  Suppose $\loc_\iota = \loc \otimes_\iota \mathbb{C}$ is endowed with a unitarily flat metric.  Suppose the finite group $\Gamma$ acts equivariantly on $\loc_\iota \rightarrow \M$ by isometries.  Let $K$ be a fixed $\Gamma$-equivariant triangulation with cochain group $C^{\bullet}(\M,\loc; K).$  We define
\begin{equation} \label{geomtwistedrt}
\log RT_{\Gamma}(\M, \loc_{\iota} ; K) := \log RT_{\Gamma}(C^{\bullet}(\M,\loc_\iota;K)),
\end{equation}
where the metrics implicit on the right side of equation (\ref{geomtwistedrt}) are those induced on $C^{\bullet}(\M,\loc_\iota; K)$ and $H^{*}(C^{\bullet}(\M,\loc_\iota;K))$ by $\loc_\iota$ (see example $(4)$ from $\S \ref{examples}$).  For notational shorthand, if the group $\Gamma$ is understood - often $\Gamma = \langle \sigma \rangle$ - we define
$$RT_{\sigma}(\M,\loc_\iota;K) := RT_{\Gamma}(\M,\loc_\iota;K)(\sigma).$$ 
If $\M,$ the triangulation, and the complex embedding $\iota$ are all understood, we denote this by $RT_{\sigma}(\loc).$  
\end{defn}

\subsection{The Morse-Smale complex and the definition of twisted Reidemeister torsion}
\label{morsesmale}

\begin{defn} Let $f: M \rightarrow \R$ be a Morse function on a compact manifold $M.$  Let $X$ be a vector field on $M.$  We say that $X$ is a \emph{weakly gradient-like vector field associated to $f$} if the critical points of $X$ equal the critical points of $f$ and if $X_p(f) > 0$ for all non-critical points $p \in M.$  That is, $f$ increases along the flow of $X.$
\end{defn}

\begin{defn}
Let $X$ be a weakly gradient-like vector field associated to a Morse function $f: \M \rightarrow \R.$  We say that $X$ satisfies \emph{Morse-Smale transversality} if for every pair $p,q$ of critical points of $X,$ the ascending manifold $W^u(p)$ of $p$ and the descending manifold $W^s(q)$ of $q$ intersect transversely.
\end{defn}

Let $X$ be a weakly gradient-like vector field associated to a Morse function $f.$  Such a vector field $X$ satisfying Morse-Smale transversality gives rise to a Morse-Smale complex which computes the cohomology of any local system $\loc \rightarrow \M$ of finite projective $R$-modules, where $R$ is any ring.  The following formulation is taken from \cite[$\S 1 (c)$]{BZ} and repeated here for convenience. Define a chain complex 
$$\MS^i(X, \loc) = \bigoplus_{x \in \mathrm{Crit}(X), \mathrm{ind}(x) = i} R[W^u(x)] \otimes_{R} \loc_x,$$
where $R[W^u(x)]$ denotes the free rank-$1$ $R$-module with basis $[W^u(x)]$ and $\mathrm{ind}(x)$ denotes the Morse index of the critical point $x.$
Because $X$ satisfies Morse-Smale transversality, the set of flow lines $\Gamma(x,y)$ from $x$ to $y$ for each pair of critical points $x,y$ is finite and empty unless $\mathrm{ind}(y) = \mathrm{ind}(x) + 1.$ 

Consider a pair of critical points $x,y$ with $\mathrm{ind}(y) = \mathrm{ind}(x) + 1.$  Fix an orientation on $X$ and on each unstable manifold $W^u(x);$  this determines an orientation on each $W^s(x).$  Because $W^u(x)$ and $W^s(y)$ intersect transversally, we can ``flow their orientations" along any integral curve $\gamma$ from $x$ to $y$ to obtain a well-defined number $n_{\gamma}(x,y) = \pm 1,$ where the sign is $+1$ if the flowed orientations agree and $-1$ if they are opposite.  We define a boundary map

$$\delta: \MS^i(X, \loc) \rightarrow \MS^{i+1}(X,\loc)$$
$$\delta(W^u(x) \otimes a) = \sum_{y \in \mathrm{Crit}(X), \mathrm{ind}(y) = i+1} \sum_{\gamma \in \Gamma(x,y)} n_{\gamma}(x,y) \cdot W^u(y) \otimes PT_{\gamma}(a),$$

where $PT_{\gamma}(a)$ denotes the parallel transport of $a$ along the integral curve $\gamma.$   Some comments are in order:

\begin{itemize}
\item
The complex $\MS^{\bullet}(X, \loc)$ computes the cohomology of $\loc \rightarrow \M.$ 

\item
 Suppose that $f$ is an invariant Morse function, $X$ is an invariant weakly gradient-like vector field, and $R = \R.$  Let $\loc \rightarrow \M$ be a $\Gamma$-equivariant, metrized local system.  Assign the vector space $\bigoplus_{\mathrm{ind}(x) = i} \R [W^u(x)]$ the combinatorial metric where the $[W^u(x)]$ form an orthonormal basis.  Then $\Gamma$ acts on $\MS^{\bullet}(X, \loc)$ by isometries and so the equivariant Reidemeister torsion of this complex makes sense.
\end{itemize}

\begin{defn} \label{morsesmalert}
Let $\loc$ be a metrized local system.  We define $RT_{\Gamma}(X, \loc)$ to be the $\Gamma$-equivariant Reidemeister torsion of the metrized $\Gamma$-complex $\MS^{\bullet}(X,\loc).$
\end{defn}

\subsection{Reidemeister torsion of the $\sigma$-isotypic pieces of $\MS^{\bullet}(X,\loc)$}
\label{geomtwistedrtorsionms}

In $\S \ref{morsert}$ and $\S \ref{cochainrt},$ we specialize Definition \ref{rtvolumeforms} for Reidemeister torsion to isotypic pieces of geometric complexes associated with local systems $\loc \rightarrow X$ equivariant for a cyclic group action.  In Lemma \ref{numericalmeaning}, the Reidemeister torsion of these isotypic pieces is related to the equivariant Reidemeister torsion of the complexified local system $\loc_{\iota} \rightarrow X$ for embeddings $\iota: N \hookrightarrow \C.$  The local systems we consider will all be \emph{rationally acyclic}.

\begin{defn}
A local system $\loc \rightarrow X$ of $N$-vector spaces is \emph{acyclic} if $H^{*}(X, \loc) = 0.$  A local system $\loc' \rightarrow X$ of $O_N$-modules is \emph{rationally acyclic} (or $\mathbb{Q}$-acyclic or $N$-acyclic) if $\loc'_N,$ or equivalently $\loc_{\mathbb{Q}},$ is acyclic.
\end{defn}

\subsubsection{Reidemeister torsion of the cochain complex $\MS^{\bullet}(X, \loc)[\sigma - 1]$ for local systems of rationally acyclic $N$-vector spaces} \label{morsert}
Let $\loc \rightarrow \M$ be an equivariant local system of $N$-vector spaces acted on $N$-linearly by $\langle \sigma \rangle$ with $\sigma^p = 1.$  Assume that $\loc$ is rationally acyclic.  Let $A^{\bullet} = \MS^{\bullet}(X,\loc)$ for a vector field $X$ which satisfies Morse-Smale transversality and is weakly gradient-like with respect to a $\sigma$-invariant Morse function $f$ on $\M.$   By a \emph{volume form} on a local system $\loc \rightarrow \M$ of $N$-vector spaces, we mean a global section of $\det(\loc)^{*}.$  \medskip

Suppose we are given volume forms $\omega$ on $\loc \rightarrow \M, \omega_{\sigma - 1}$ on $\loc[\sigma - 1] \rightarrow \M_{\sigma},$ and $\omega_{P(\sigma)}$ on $\loc[P(\sigma)] \rightarrow \M_{\sigma}.$  These give rise to volume forms on the chain groups $A^{\bullet}[\sigma -1]$ and $A^{\bullet}[P(\sigma)].$  Indeed, let $\mathcal{C}$ be a set of representatives for the orbits of $\sigma$ acting on the critical points of $X.$  Then $A^{\bullet}[\sigma -1]$ has a ``geometric" $N$-basis given as follows. 

\begin{itemize}
\item
Suppose $x \in \mathcal{C}$ is not fixed by $\sigma.$  We let $\mathcal{O}_x := \bigoplus W(\sigma^j \cdot x) \otimes \loc_{\sigma^j \cdot x},$ where $\loc_x$ denotes the fiber of $\loc$ over $x.$ Then a basis for $\mathcal{O}_x[\sigma - 1]$ is given by $\{  \sum_j \sigma^j W^u(x) \otimes \sigma^j e^x_k \},$ where $e^x_1,...,e^x_r$ runs through a basis of $\loc_x.$     

\item
If $x \in \mathcal{C}$ is fixed by $\sigma,$ then let $f^x_1,...,f^x_s$ be a basis for $\loc_x[\sigma-1].$  A basis for $\mathcal{O}_x[\sigma - 1] = W^u(x) \otimes \loc_x[\sigma-1]$ is given by $\{ W^u(x) \otimes f^x_k \}.$ 
\end{itemize}     

Then a volume form $\theta^i_{\sigma - 1}$ is given by 

\begin{align*}
\theta^i_{\sigma - 1} \left( \bigwedge_{x \text{ not fixed}} \bigwedge_k \left( \sum_j W^u(\sigma^j x) \otimes \sigma^j e^x_k \right) \wedge \bigwedge_{x \text{ fixed}} \bigwedge_k W^u(x) \otimes f^x_k \right) &=& \prod_{x \text{ not fixed } } \omega(e^x_1 \wedge ... \wedge e^x_r ) \\  
&\times& \prod_{x \text{ fixed}} \omega_{\sigma - 1}(f^x_1 \wedge ... \wedge f^x_s),
\end{align*}

where $x$ ranges over all critical points of $\mathcal{C}$ of index $i.$  Using the volume form $\omega_{P(\sigma)},$ we can construct a collection of geometric volume forms $\theta^i_{P(\sigma)}$ in a completely analogous manner.  By the definition of Reidemeister torsion, it then follows that 

$$RT(A^{\bullet}[\sigma - 1], \theta_{\sigma - 1}), RT(A^{\bullet}[P(\sigma)], \theta_{P(\sigma)}) \in N^{\times}.$$

\subsection{Reidemeister torsion of the $\sigma$-isotypic pieces of $C^{\bullet}(\M,\loc;K)$}
\label{geomtwistedrtorsion}

\subsubsection{Reidemeister torsion of the cochain complex $C^{\bullet}(\M,\loc;K)[\sigma - 1]$ for local systems of rationally acyclic $N$-vector spaces} \label{cochainrt}
Let $\loc \rightarrow \M$ be an equivariant local system of $N$-vector spaces acted on $N$-linearly by $\langle \sigma \rangle$ with $\sigma^p = 1.$  Assume that $\loc$ is rationally acyclic.  Let $A^{\bullet} = C^{\bullet}(\M,\loc ;K)$ for a $\sigma$-equivariant triangulation of $\M.$ By a \emph{volume form} on a local system $\loc \rightarrow \M$ of $N$-vector spaces, we mean a global section of $\det(L)^{*}.$  \medskip

Suppose we are given volume forms $\omega$ on $\loc \rightarrow \M, \omega_{\sigma - 1}$ on $\loc[\sigma - 1] \rightarrow \M_{\sigma},$ and $\omega_{P(\sigma)}$ on $\loc[P(\sigma)] \rightarrow \M_{\sigma}.$  These give rise to volume forms on the chain groups $A^{\bullet}[\sigma -1]$ and $A^{\bullet}[P(\sigma)].$  Indeed, let $\mathcal{C} = \{ C \}$ be a set of representatives for the orbits of $\sigma$ acting on the $i$-cells of the triangulation $K.$  Then $A^{\bullet}[\sigma -1]$ has a ``geometric" $N$-basis given as follows. 

\begin{itemize}
\item
Suppose $C$ is not fixed by $\sigma.$  We let $\mathcal{O}_C := \bigoplus \sigma^j \cdot C \otimes \loc(C),$ where $\loc(C)$ denotes the $N$-vector space of sections of $\loc$ over $C.$  Then a basis for $\mathcal{O}_C[\sigma - 1]$ is given by $\{  \sum_j \sigma^j C \otimes \sigma^j e^C_k \},$ where $e^C_1,...,e^C_r$ runs through a basis of $\loc(C).$     

\item
If $C$ is fixed by $\sigma,$ then let $f^C_1,...,f^C_s$ be a basis for $L[\sigma-1](C).$  A basis for $\mathcal{O}_C[\sigma - 1] = C \otimes \loc[\sigma-1](C)$ is given by $\{ C \otimes f^C_k \}.$ 
\end{itemize}     

Then a volume form $\theta^i_{\sigma - 1}$ is given by 

\begin{align*}
\theta^i_{\sigma - 1} \left( \bigwedge_{C \text{ not fixed}} \bigwedge_k \left( \sum_j \sigma^j C \otimes \sigma^j e^C_k \right) \wedge \bigwedge_{C \text{ fixed}} \bigwedge_k C \otimes f^C_k \right) &=& \prod_{C \text{ not fixed } } \omega(e^C_1 \wedge ... \wedge e^C_r ) \\  
&\times& \prod_{C \text{ fixed}} \omega_{\sigma - 1}(f^C_1 \wedge ... \wedge f^C_s),
\end{align*}

Using the volume form $\omega_{P(\sigma)},$ we can construct a collection of geometric volume forms $\theta^i_{P(\sigma)}$ in a completely analogous manner.  By the definition of Reidemeister torsion, it then follows that 

$$RT(A^{\bullet}[\sigma - 1], \theta_{\sigma - 1}), RT(A^{\bullet}[P(\sigma)], \theta_{P(\sigma)}) \in N^{\times}.$$

\begin{rem}[the relative merits of the Morse-Smale complex]
$\S \ref{geomtwistedrtorsion}$ is nearly identical to $\S \ref{geomtwistedrtorsionms}.$  We see fit to mention the following:

\begin{itemize}
\item
The Bismut-Zhang theorem (see $\S \ref{statementofBZ}$) is expressed in the language of Morse-Smale complexes.  

\item
$\S \ref{geomtwistedrtorsionms}$ is a special case of $\S \ref{geomtwistedrtorsion}$ where we take the decomposition of $\M$ into (the closures of) unstable cells $W^u(x)$ as our cell decomposition.  

\item
The complex $\MS^{\bullet}(X,\loc)$ appears to be a slight enrichment of $C^{\bullet}(M, \loc; \{ W^u(x) \})$ in that each unstable manifold is endowed with a distinguished point which we were used to define the volume forms on $\MS(X, \loc)[\sigma - 1], \MS(X,  \loc)[P(\sigma)].$  However, as long as $\loc \rightarrow \M, \loc[\sigma-1] \rightarrow \M_{\sigma}$ and $\loc[P(\sigma)] \rightarrow \M_{\sigma}$ are unimodular, the aforementioned volume forms are ``independent of choice of basepoint". 
The Morse-Smale perspective proves more convenient in \cite{BZ}, \cite{BZ2} where the authors prove Cheeger-M\"{u}ller type theorems for local systems $\loc$ not assumed to be unimodular.    
\end{itemize}

\end{rem}

\subsubsection{Reidemeister torsion and norm compatibility}
\label{restrictionofscalars}

Let $N$ be a number field and $\pi: \mathrm{Spec} \; O_N \rightarrow \mathrm{Spec} \; \mathbb{Z}$ the natural map.  For any $O_N$ or $N$-module $P,$ we let $\pi_{*}P$ denote its restriction of scalars.  To describe the compatiblity between the Reidemeister torsion of the complex $P^{\bullet}$ and of the complex $\pi_{*} P^{\bullet}$ requires the notion of the norm of a projective $O_N$-module.  We refer to \cite[II, $\S 4$]{O} for the required foundations.  

\begin{lem}\label{oesterle}
Let $P$ be a projective $O_N$-module of finite rank $d.$  There is a canonical isomorphism 
$$\mathrm{det}_{\mathbb{Z}} (\pi_{*} P) = \mathrm{Norm}_{O_N/\mathbb{Z}}(\mathrm{det}_{O_N} (P)) \otimes (\mathrm{det}_{\mathbb{Z}}O_N)^{\otimes d}.$$  
\end{lem}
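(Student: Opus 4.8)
The plan is to reduce the statement to the local case, i.e.\ to verify the asserted isomorphism after completing or localizing at each rational prime, and then patch. Recall that a projective $O_N$-module $P$ of rank $d$ is locally free, and over a semilocal ring (in particular over $O_N \otimes \mathbb{Z}_{(\ell)}$ for each rational prime $\ell$) it is in fact free. So first I would fix a rational prime $\ell$, set $R = O_N \otimes \mathbb{Z}_{(\ell)}$, and choose an $R$-basis $v_1,\dots,v_d$ of $P \otimes \mathbb{Z}_{(\ell)}$; simultaneously choose a $\mathbb{Z}_{(\ell)}$-basis $\omega_1,\dots,\omega_n$ of $O_N \otimes \mathbb{Z}_{(\ell)}$ (where $n = [N:\mathbb{Q}]$). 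Then $\{\omega_a v_i\}_{a,i}$ is a $\mathbb{Z}_{(\ell)}$-basis of $\pi_* P \otimes \mathbb{Z}_{(\ell)}$, which gives a generator of $\det_{\mathbb{Z}}(\pi_* P)$ localized at $\ell$; on the other side, $v_1 \wedge \cdots \wedge v_d$ generates $\det_{O_N}(P)$, and $\omega_1 \wedge \cdots \wedge \omega_n$ generates $\det_{\mathbb{Z}} O_N$, all localized at $\ell$. The map sends $(\omega_1 v_1)\wedge\cdots$ to $\mathrm{Norm}_{O_N/\mathbb{Z}}(v_1\wedge\cdots\wedge v_d)\otimes(\omega_1\wedge\cdots\wedge\omega_n)^{\otimes d}$, up to a sign which I would pin down once and for all by fixing an ordering convention on the index set $\{(a,i)\}$.

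Next I would check that this local recipe is independent of the choices. A change of $R$-basis of $P$ by a matrix $g \in \mathrm{GL}_d(R)$ multiplies $v_1\wedge\cdots\wedge v_d$ by $\det g$, hence multiplies the right-hand side by $\mathrm{Norm}_{O_N/\mathbb{Z}}(\det g)$ (using that the norm of a projective rank-one module is multiplicative and $\mathrm{Norm}$ of a principal module generated by a unit is the principal module generated by the field norm of that unit); on the left-hand side the basis $\{\omega_a v_i\}$ changes by the block matrix acting through $g$, whose $\mathbb{Z}_{(\ell)}$-determinant is exactly $N_{N/\mathbb{Q}}(\det g)$ — this is the standard determinant-of-restriction-of-scalars identity. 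A change of $\mathbb{Z}_{(\ell)}$-basis of $O_N\otimes\mathbb{Z}_{(\ell)}$ by $h\in\mathrm{GL}_n(\mathbb{Z}_{(\ell)})$ multiplies both sides by $(\det h)^d$. So the isomorphism is well-defined locally, and since the local isomorphisms agree on overlaps (they are all induced by the same functorial construction — norm of modules and restriction of scalars are defined integrally, cf.\ \cite[II, $\S 4$]{O}), they glue to a canonical global isomorphism of invertible $\mathbb{Z}$-modules. Finally I would verify the claim at the infinite places / over $\mathbb{Q}$ by the same computation tensored with $\mathbb{R}$, which forces the two sides to be abstractly isomorphic and the glued map to be an isomorphism rather than merely an inclusion of index a power of $\ell$.

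The main obstacle I anticipate is purely bookkeeping: getting the sign and the ordering of the wedge factors in $\det_{\mathbb{Z}}(\pi_* P) = \wedge^{nd}(\pi_* P)$ to match consistently with the definition of $\mathrm{Norm}_{O_N/\mathbb{Z}}$ on rank-one modules and with the $d$-th tensor power of $\det_{\mathbb{Z}} O_N$, so that the isomorphism is genuinely canonical (independent of all choices, including the ordering conventions) and not just canonical up to sign. The conceptually cleanest route around this is to first treat the rank-one case $d=1$ — where the statement $\det_{\mathbb{Z}}(\pi_* I) = \mathrm{Norm}_{O_N/\mathbb{Z}}(I)\otimes \det_{\mathbb{Z}} O_N$ for a fractional ideal $I$ is essentially the content of \cite[II, $\S 4$]{O} — and then bootstrap to general $d$ using a filtration of $P$ with rank-one projective subquotients (locally: $P \cong \bigoplus R v_i$, so $\det_{O_N} P$ and $\pi_* P$ both decompose compatibly), together with additivity of $\det$ over short exact sequences. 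This reduces the sign issue to the unambiguous rank-one case plus the well-understood combinatorics of $\det$ of a direct sum.
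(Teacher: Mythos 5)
The paper does not actually give a proof of this lemma: it simply writes ``See \cite[II,$\S 4$]{O}'' and leaves the argument to Oesterl\'e's paper. So you are filling in a step the paper delegates to the reference. Your localize-and-glue argument is correct and is, in substance, the argument underlying Oesterl\'e's treatment (the norm of a projective $O_N$-module is constructed locally, exactly as you outline). The essential ingredients are all in place: semilocality of $O_N\otimes\mathbb{Z}_{(\ell)}$ (so that $P\otimes\mathbb{Z}_{(\ell)}$ is free), the determinant-of-restriction-of-scalars identity $\det_{\mathbb{Z}_{(\ell)}}(\mathrm{Res}_{R/\mathbb{Z}_{(\ell)}}\,g)=N_{R/\mathbb{Z}_{(\ell)}}(\det g)$ for $g\in\mathrm{GL}_d(R)$, the $(\det h)^d$ bookkeeping under change of $\mathbb{Z}_{(\ell)}$-basis of $O_N$, and gluing of the local isomorphisms. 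The sign/ordering ambiguity you flag is genuine but benign: it is resolved once and for all by fixing a convention (e.g.\ lexicographic ordering of the index set $\{(a,i)\}$, with $a$ varying fastest), and the change-of-basis computations then show that both sides pick up the \emph{same} factor, so the resulting isomorphism is canonical, not merely canonical up to sign. Your alternative route via the rank-one case (which is directly in \cite[II, $\S 4$]{O}) plus a filtration by rank-one subquotients and multiplicativity of $\det$ in short exact sequences is also correct, and is arguably the cleaner way to dispose of the sign issue. Either way your argument is a valid replacement for the citation.
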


\begin{proof}
See \cite[II,$\S 4$]{O}.
\end{proof}

\begin{lem} \label{rtrestrictionofscalars}
Let $P^{\bullet}$ be a finite complex $N$-vector spaces with $P^i$ of dimension $d^i.$  Suppose $P^{\bullet}$ is equipped with a collection of volume forms $\omega_{\bullet}$ and that the cohomology spaces $H^{*}(P^{\bullet})$ are equipped with volume forms $\mu_{\bullet}.$  Suppose that $RT(P^{\bullet},\omega_{\bullet},\mu_{\bullet}) = f \in N^{\times}.$  Then $RT(\pi_{*}P^{\bullet}, \mathrm{Norm}_{N/\mathbb{Q}} \omega_{\bullet}, \mathrm{Norm}_{N/\mathbb{Q}} \mu_{\bullet}) = \mathrm{Norm}_{N/\mathbb{Q}} f.$  
\end{lem}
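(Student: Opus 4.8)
The plan is to reduce everything to the definition of Reidemeister torsion via the canonical section $s_{P^\bullet}$ of $\det(P^\bullet)^{-1} \otimes_K \det(H^*(P^\bullet))$ and to track how this section and the volume forms transform under restriction of scalars $\pi_*$. The key point is that $\pi_*$ is an exact functor, so it carries the complex $P^\bullet$ over $N$ to the complex $\pi_* P^\bullet$ over $\mathbb{Q}$ with $H^*(\pi_* P^\bullet) = \pi_*\big(H^*(P^\bullet)\big)$, and it carries the acyclic filtration pieces (coboundaries, cocycles) of $P^\bullet$ to those of $\pi_* P^\bullet$. Consequently the canonical isomorphism \eqref{detofcomplex} for $P^\bullet$ over $N$ maps, under $\pi_*$ and the canonical isomorphism of Lemma \ref{oesterle}, to the canonical isomorphism \eqref{detofcomplex} for $\pi_* P^\bullet$ over $\mathbb{Q}$. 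First I would spell out this compatibility at the level of the determinant lines: applying Lemma \ref{oesterle} to each $P^i$ gives
$$\mathrm{det}_{\mathbb{Q}}(\pi_* P^i) = \mathrm{Norm}_{N/\mathbb{Q}}\big(\mathrm{det}_N(P^i)\big) \otimes (\mathrm{det}_{\mathbb{Q}} N)^{\otimes d^i},$$
and taking the alternating tensor product over $i$, the auxiliary factors $(\mathrm{det}_{\mathbb{Q}} N)^{\otimes d^i}$ collect into $(\mathrm{det}_{\mathbb{Q}} N)^{\otimes \sum^* d^i}$, while the same happens for $H^*(\pi_* P^\bullet) = \pi_* H^*(P^\bullet)$ with exponent $\sum^* h^i$. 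Since $P^\bullet$ is a complex (its cohomology has finite total dimension counted with the correct signs), one has $\sum^* d^i = \sum^* h^i$, so these auxiliary $(\mathrm{det}_{\mathbb{Q}} N)$-factors cancel in $\det(\pi_* P^\bullet)^{-1} \otimes \det(H^*(\pi_* P^\bullet))$. Thus the section $s_{\pi_* P^\bullet}$ is identified, via Lemma \ref{oesterle} and $\mathrm{Norm}_{N/\mathbb{Q}}$, with $\mathrm{Norm}_{N/\mathbb{Q}}(s_{P^\bullet})$ — here the subtle point worth checking is that the canonical section, being defined purely from the exact sequences $0 \to Z^i \to P^i \to B^{i+1} \to 0$ and $0 \to B^i \to Z^i \to H^i \to 0$, is functorial and behaves multiplicatively under $\otimes$, so it is preserved by the functorial, monoidal operations $\pi_*$ and $\mathrm{Norm}$.

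Next I would compute the pairing: $RT(\pi_* P^\bullet, \mathrm{Norm}\,\omega_\bullet, \mathrm{Norm}\,\mu_\bullet)$ is by definition the value of $\bigotimes^* (\mathrm{Norm}\,\omega_i) \otimes \bigotimes^* (\mathrm{Norm}\,\mu_i)^{-1}$ on $s_{\pi_* P^\bullet}$. Under the identifications above, this pairing becomes $\mathrm{Norm}_{N/\mathbb{Q}}$ of the pairing of $\bigotimes^* \omega_i \otimes \bigotimes^* \mu_i^{-1}$ against $s_{P^\bullet}$, because $\mathrm{Norm}_{N/\mathbb{Q}}$ on one-dimensional $N$-spaces is multiplicative, is compatible with the duality used in the alternating tensor product (the norm of a dual line is the dual of the norm of the line, canonically), and intertwines the canonical evaluation pairing $\det(V)^* \otimes \det(V) \to N$ with $\det(\pi_* V)^* \otimes \det(\pi_* V) \to \mathbb{Q}$ via the restriction-of-scalars trace/norm map on $N \to \mathbb{Q}$. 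That last compatibility — that the "evaluation" of a norm-volume-form on a norm-section recovers $\mathrm{Norm}_{N/\mathbb{Q}}$ of the original scalar in $N^\times$ — is exactly the statement for the trivial complex $P^\bullet = (P^0)$ concentrated in a single degree, i.e. $\mathrm{Norm}_{N/\mathbb{Q}}$ of a scalar $f \in N^\times$, and this is the defining property recorded in \cite[II, $\S 4$]{O}; I would cite it rather than reprove it. Putting the two identifications together gives $RT(\pi_* P^\bullet, \mathrm{Norm}\,\omega_\bullet, \mathrm{Norm}\,\mu_\bullet) = \mathrm{Norm}_{N/\mathbb{Q}}(f)$, as claimed.

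The main obstacle I anticipate is bookkeeping rather than conceptual: making sure all the "auxiliary" factors $(\mathrm{det}_{\mathbb{Q}} N)^{\otimes \bullet}$ arising from Lemma \ref{oesterle} genuinely cancel once one passes to $\det(\pi_* P^\bullet)^{-1} \otimes \det(H^*(\pi_* P^\bullet))$, and that the signs and dualities in the alternating tensor product $\bigotimes^*$ are respected by $\mathrm{Norm}_{N/\mathbb{Q}}$ (a norm functor on lines, being monoidal and compatible with duality up to canonical isomorphism, does respect this, but the signs in $(-1)^i$ must be matched degree by degree). Once the determinant-line identifications are pinned down precisely, the evaluation step is a formal consequence of multiplicativity of the norm and the fact that $s_{P^\bullet}$ is canonical; no further analysis is needed. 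I would present the argument as: (i) recall $\pi_*$ exact $\Rightarrow$ cohomology and the KM canonical isomorphism are preserved; (ii) apply Lemma \ref{oesterle} degreewise and cancel auxiliary factors using $\sum^* d^i = \sum^* h^i$; (iii) conclude by multiplicativity of $\mathrm{Norm}_{N/\mathbb{Q}}$ on the evaluation pairing, reducing to the single-line case handled in \cite{O}.
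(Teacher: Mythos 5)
Your proposal follows the paper's own argument in all essentials: applying Lemma \ref{oesterle} degree by degree, cancelling the auxiliary $(\det_{\mathbb{Q}} N)$-factors via the Euler-characteristic identity $\sum^{*} d_i = \sum^{*} h_i$, identifying $\mathrm{Norm}_{N/\mathbb{Q}}\, s_{P^{\bullet}}$ with $s_{\pi_{*}P^{\bullet}}$, and invoking multiplicativity of the norm on the evaluation pairing, citing Oesterl\'e for the last compatibility. The paper is a bit terser (it states the section identity as ``readily found by unravelling'' rather than walking through exactness of $\pi_{*}$ and functoriality of the Knudsen--Mumford isomorphism), but the route is the same.
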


\begin{rem}
In accordance with Lemma \ref{oesterle}, $\mathrm{Norm}_{N/\mathbb{Q}} \omega_{\bullet}, \mathrm{Norm}_{N/\mathbb{Q}} \mu_{\bullet}$ are not volume forms, but rather $\mathrm{Norm}_{N/\mathbb{Q}} \omega_{\bullet} \otimes \gamma_0^{\otimes \dim_N P^{\bullet}}$ and $\mathrm{Norm}_{N/\mathbb{Q}} \mu_{\bullet} \otimes \gamma_0^{\otimes \dim_N H^{*}(P^{\bullet})}$ are, for  $\gamma_0 \in (\wedge^{\mathrm{top}}_{\mathbb{Q}}N)^{*}.$  However, as will be clear from the proof to follow, the Reidemeiester torsion of the resulting complex is independent of choice of $\gamma_0$; this is why $\gamma_0$ is omitted from the notation.
\end{rem}

\begin{proof}
Let $d_i = \dim_N P^i,h_i = \dim_N H^i(P^{\bullet}).$  By Lemma \ref{oesterle},
\begin{eqnarray*}
&{}& \mathrm{det}_{\mathbb{Q}}(\pi_{*} P^{\bullet}) \otimes \mathrm{det}_{\mathbb{Q}} (\pi_{*} H^{*}(P^{\bullet}))^{-1} \\
&=& \mathrm{Norm}_{N/\mathbb{Q}} (\mathrm{det}_N P^{\bullet}) \otimes \mathrm{Norm}_{N/\mathbb{Q}}(\mathrm{det}_N H^{*}(P^{\bullet}))^{-1} \otimes (\mathrm{det}_{\mathbb{Q}} N)^{\otimes \sum {}^{*} d_i - \sum {}^{*} h_i}  \\
&=&  \mathrm{Norm}_{N/\mathbb{Q}} (\mathrm{det}_N P^{\bullet}) \otimes \mathrm{Norm}_{N/\mathbb{Q}}(\mathrm{det}_N H^{*}(P^{\bullet}))^{-1}. 
\end{eqnarray*} 
The last equality follows because $\sum {}^{*} d_i = \sum {}^{*} h_i,$ both equalling the Euler characteristic of the complex $P^{\bullet}.$  The complex $P^{\bullet}$ gives rise to the section $s_{P^{\bullet}} = f \cdot \omega_{\bullet} \otimes \mu_{\bullet}^{-1}$ of $\mathrm{det}_{N}(P^{\bullet}) \otimes \mathrm{det}_{N} (H^{*}(P^{\bullet}))^{-1},$ where $f = RT(P^{\bullet},\omega_{\bullet},\mu_{\bullet}).$  Therefore,
\begin{eqnarray*}
\mathrm{Norm}_{N/\mathbb{Q}} s_{P^{\bullet}} &=& \mathrm{Norm}_{N/\mathbb{Q}} (f \cdot \omega_{\bullet} \otimes \mu_{\bullet}^{-1}) \\
&=& \mathrm{Norm}_{N / \mathbb{Q}} f \cdot (\mathrm{Norm}_{N / \mathbb{Q}} \omega_{\bullet}) \otimes (\mathrm{Norm}_{N / \mathbb{Q}} \mu_{\bullet})^{-1}.  
\end{eqnarray*} 
Unravelling the isomorphism from Lemma \ref{oesterle} (see \cite[II, $\S 4.2$]{O}), we readily find that $\mathrm{Norm}_{N/\mathbb{Q}} s_{P^{\bullet}} = s_{\pi_{*} P^{\bullet}}.$  The lemma follows.     
\end{proof}

\subsubsection{Applying norm compatibility to the geometric complexes $C^{\bullet}(\M,\loc;K), \MS(X, \loc)$ for local systems of $O_N$-modules}
\label{rtorsionaltprod}

Suppose $\loc \rightarrow \M$ is an equivariant local system of projective $O_N$-modules for which $\det(\loc)^{*}$ is trivial and $\det(\loc[\sigma-1])^{*}, \det(\loc[P(\sigma)])^{*} \rightarrow \M_{\sigma}$ are trivial with bases $\omega, \omega_{\sigma - 1}, \omega_{P(\sigma)}.$  Suppose further that $\loc_N$ is acyclic.  Let $A^{\bullet} = C^{\bullet}(\M,\loc ;K)$ or $\MS(X,\loc),$ as defined in $\S \ref{geomtwistedrtorsionms}$ and $\S \ref{geomtwistedrtorsion}.$  As before, we construct volume forms $\theta^i_{\sigma-1}, \theta^i_{P(\sigma)-1}$ now generators of the \emph{free $O_N$-modules} $\left( \wedge_{O_N}^{\mathrm{top}} A^i[\sigma - 1] \right)^{*}$  and $\left( \wedge_{O_N}^{\mathrm{top}} A^i[P(\sigma)] \right)^{*}$ respectively.  Let $\gamma_0 \in \left(\wedge_{\mathbb{Z}}^{\mathrm{top}} O_N \right)^{*}.$

As a corollary to Lemma \ref{rtrestrictionofscalars}, we obtain the main result of this section:
\begin{cor} \label{altprodofcohomology}
Let $A^{\bullet} = C^{\bullet}(\M,\loc ;K)$ or $\MS(X,\loc).$  If $RT(A^{\bullet}[\sigma - 1], \theta_{\sigma-1}) = f, RT(A^{\bullet}[P(\sigma)], \theta_{P(\sigma)}) = f' \in N^{\times},$ then 
$$\mathrm{Norm}_{N/ \mathbb{Q}} f =  \pm \prod {}^{*} |H^i(A^{\bullet}[\sigma - 1])|, \mathrm{Norm}_{N/ \mathbb{Q}} f' =  \pm \prod {}^{*} |H^i(A^{\bullet}[P(\sigma)])|.$$
\end{cor}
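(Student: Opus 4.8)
The plan is to reduce the statement to Lemma \ref{rtrestrictionofscalars} together with the integral computation of Reidemeister torsion from example (1) of $\S \ref{examples}$. First I would observe that the complexes $A^{\bullet}[\sigma - 1]$ and $A^{\bullet}[P(\sigma)]$ are finite complexes of free $O_N$-modules: by the explicit geometric bases written down in $\S \ref{geomtwistedrtorsionms}$ and $\S \ref{geomtwistedrtorsion}$, each $A^i[\sigma-1]$ and each $A^i[P(\sigma)]$ is free over $O_N$, with the volume forms $\theta^i_{\sigma-1}, \theta^i_{P(\sigma)}$ being generators of the corresponding top exterior powers $(\wedge^{\mathrm{top}}_{O_N} A^i[\sigma-1])^*$ and $(\wedge^{\mathrm{top}}_{O_N} A^i[P(\sigma)])^*$. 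The cohomology of $A^{\bullet}[\sigma-1]$ (resp. $A^{\bullet}[P(\sigma)]$) is a finitely generated $O_N$-module; one must note here that it is in fact torsion, i.e. $H^i(A^{\bullet}[\sigma-1])_N = 0$. This uses rational acyclicity of $\loc$: since $\loc_N$ is acyclic, the complex $A^{\bullet}_N = A^{\bullet} \otimes_{O_N} N$ is acyclic, hence so are its $\sigma$-isotypic summands $A^{\bullet}_N[\sigma-1]$ and $A^{\bullet}_N[P(\sigma)]$ (taking $\langle\sigma\rangle$-invariants of a given type is exact over the characteristic-zero field $N$), so $H^*(A^{\bullet}[\sigma-1])$ and $H^*(A^{\bullet}[P(\sigma)])$ are finite $O_N$-modules. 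In particular these cohomology groups have finite cardinality, so $\prod{}^* |H^i(A^{\bullet}[\sigma-1])|$ and $\prod{}^* |H^i(A^{\bullet}[P(\sigma)])|$ make sense.

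Next I would apply Lemma \ref{rtrestrictionofscalars} to $P^{\bullet} = A^{\bullet}[\sigma-1]$ (and separately to $A^{\bullet}[P(\sigma)]$), with $\omega_{\bullet} = \theta_{\sigma-1}$, viewing everything over the fraction field $F = N$: since $RT(A^{\bullet}[\sigma-1], \theta_{\sigma-1}) = f \in F^\times$, the lemma gives
$$RT(\pi_* A^{\bullet}[\sigma-1], \mathrm{Norm}_{F/\mathbb{Q}}\theta_{\sigma-1}, \mathrm{Norm}_{F/\mathbb{Q}}\mu_{\bullet}) = \mathrm{Norm}_{F/\mathbb{Q}} f,$$
where $\mu_\bullet$ is the zero volume form datum on the (zero-dimensional) rational cohomology — more precisely, since $H^*(A^{\bullet}[\sigma-1])_N = 0$, the cohomology spaces over $F$ vanish and there is no $\mu$-ambiguity. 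Now I would identify $\pi_* A^{\bullet}[\sigma-1]$ with a complex of finite free $\mathbb{Z}$-modules: restriction of scalars of a free $O_N$-module is a free $\mathbb{Z}$-module, and by Lemma \ref{oesterle} the volume form $\mathrm{Norm}_{F/\mathbb{Q}}\theta_{\sigma-1} \otimes \gamma_0^{\otimes d}$ is, up to sign, the canonical integral volume form $\alpha_{\mathbb{Z}}$ attached to the integral structure $\pi_* A^{\bullet}[\sigma-1]$ (this is exactly the content of unravelling the norm isomorphism — the generator $\theta_{\sigma-1}$ of $\det_{O_N}$ maps to a generator of $\mathrm{Norm}_{O_N/\mathbb{Z}}\det_{O_N}$, which tensored with $(\det_{\mathbb{Z}}O_N)^{\otimes d}$ recovers $\det_{\mathbb{Z}}\pi_*$). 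Then example (1) of $\S \ref{examples}$ computes
$$RT(\pi_* A^{\bullet}[\sigma-1], \alpha_{\mathbb{Z}}, \beta_{\mathbb{Z}}) = \prod{}^* |H^i(\pi_* A^{\bullet}[\sigma-1])_{\mathrm{tors}}| = \prod{}^* |H^i(A^{\bullet}[\sigma-1])|,$$
the last equality because $H^i(\pi_* A^{\bullet}[\sigma-1]) = \pi_* H^i(A^{\bullet}[\sigma-1])$ is already finite (all torsion), and restriction of scalars preserves cardinality. Comparing the two computations of the same Reidemeister torsion yields $\mathrm{Norm}_{F/\mathbb{Q}} f = \pm \prod{}^* |H^i(A^{\bullet}[\sigma-1])|$, and the identical argument with $P(\sigma)$ in place of $\sigma-1$ gives the second identity.

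The main obstacle I anticipate is bookkeeping the sign and the $\gamma_0$-factor correctly: Lemma \ref{rtrestrictionofscalars} produces Reidemeister torsion with respect to the twisted volume form $\mathrm{Norm}_{F/\mathbb{Q}}\omega_\bullet$, not literally $\alpha_{\mathbb{Z}}$, and one must check via the change-of-volume-form formula \eqref{changeofvolumeform} (together with the fact that $\sum{}^* d_i = \sum{}^* h_i$ so that the powers of $\gamma_0$ cancel in the alternating tensor product) that the discrepancy is a root of unity, hence $\pm 1$ after taking absolute values / working in $\mathbb{Q}^\times$. A secondary subtlety is making sure the cohomology volume-form data is genuinely vacuous — this is where rational acyclicity is essential and is the reason the statement is restricted to $\loc_N$ acyclic. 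Neither of these is deep; the corollary is essentially a formal consequence of the preceding two lemmas.
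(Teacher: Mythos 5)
Your proposal is correct and follows essentially the same route as the paper: apply Lemma \ref{rtrestrictionofscalars} to the $\sigma$-isotypic subcomplexes with the geometric volume forms $\theta$, then invoke example (1) of $\S$\ref{examples} to identify the restricted-scalars Reidemeister torsion with $\pm\prod{}^*|H^i|$. The paper's own proof is terser (it compresses the $\gamma_0$-bookkeeping and the verification of rational acyclicity of the isotypic pieces), but the two key inputs and their order of application are identical to yours.
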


\begin{proof}
As explained in item $(1)$ of $\S \ref{examples},$ the Reidemeister torsion for any finite, rationally acyclic complex $C^{\bullet}$ of $\mathbb{Z}$-modules with volume forms $\alpha_i$ given by a generator of $(\wedge_{\mathbb{Z}}^{\mathrm{top}} C^i)^{*}$ satisfies

$$RT(C^{\bullet}, \alpha) = \pm \prod {}^{*} |H^i(C^{\bullet})|.$$

The result follows immediately by Lemma \ref{rtrestrictionofscalars}, with $C^{\bullet} = \MS^{\bullet}(X, \loc)[\sigma - 1], \MS^{\bullet}(X, \loc)[P(\sigma)].$  
\end{proof}

%
%
%

\subsubsection{Twisted Reidemeister torsion of $\MS^{\bullet}(X,\loc), C^{\bullet}(\M,\loc;K)$ and compatibility with base change to $\C$}
\label{complexifiedcomplex}
Let $N$ be a number field.  Let $\iota: N \hookrightarrow \C$ be a complex embedding.  Let $\loc \rightarrow M$ be an equivariant local system of $O_N$-modules acted on by $\langle \sigma \rangle$ with $\sigma^p = 1.$  Suppose that there are global volume forms $\omega$ on $\det(\loc) \rightarrow \M, \omega_{\sigma - 1}$ on $\loc[\sigma - 1] \rightarrow \M_{\sigma},$ and $\omega_{P(\sigma)}$ on $\loc[P(\sigma)] \rightarrow \M_{\sigma}.$  \medskip

Suppose further that there is a metric $h$ on $\loc_{\iota} := \loc \otimes_{\iota} \C$ which induces the norm $|\omega_{\iota}|$ and which also induces the norms $|\omega_{\sigma-1,\iota}|, |\omega_{P(\sigma),\iota}|$ on $\loc[\sigma-1] \otimes_{\iota} \C \rightarrow \M_{\sigma}, \loc[P(\sigma)] \otimes_{\iota} \C \rightarrow \M_{\sigma}.$  Suppose further that $\sigma$ acts isometrically on $\loc \rightarrow \M$ with respect to $h.$  

\begin{lem} \label{numericalmeaning} 
Let $A^{\bullet} = C^{\bullet}(\M,\loc ;K)$ or $\MS(X,\loc).$  Suppose $RT(A^{\bullet}[\sigma - 1], \theta_{\sigma-1}) = f, RT(A^{\bullet}[P(\sigma)], \theta_{P(\sigma)}) = f' \in N^{\times}.$  Then
$$\log RT_{\sigma}(\M,\loc_{\iota},h) = \log |\iota(f)| - \frac{1}{p-1} \log |\iota(f')|.$$
Furthermore, if the volume forms $\omega_{\sigma-1}, \omega_{P(\sigma)}$ giving rise to $\theta_{\sigma -1}, \theta_{P(\sigma)}$ are generators of $\mathrm{det}_{O_N}(\loc[\sigma - 1])^{*}$ and $\mathrm{det}_{O_N}(\loc_{P(\sigma)})^{*},$ then 

$$|\mathrm{Norm}_{N/\mathbb{Q}}(f)| = \prod {}^{*} | H^i(A^{\bullet}[\sigma - 1])|,$$ 
$$|\mathrm{Norm}_{N /\mathbb{Q}}(f')| = \prod {}^{*} | H^i(A^{\bullet}[P(\sigma)])| .$$ 
\end{lem}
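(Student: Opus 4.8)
\textbf{Proof plan for Lemma \ref{numericalmeaning}.}

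The plan is to reduce everything to the definition of equivariant Reidemeister torsion via the isotypic decomposition, and then to the $N^\times$-valued Reidemeister torsion already set up in \S\ref{cochainrt} and \S\ref{morsert}. First I would recall that $\langle\sigma\rangle\cong\mathbb{Z}/p\mathbb{Z}$ has exactly $p$ irreducible complex characters: the trivial character $\mathbf{1}$ and the $p-1$ nontrivial characters $\chi,\chi^2,\dots,\chi^{p-1}$, each one-dimensional, and that over $\C$ we have a decomposition $A^\bullet\otimes_\iota\C = A^\bullet[\mathbf 1]\oplus\bigoplus_{k=1}^{p-1}A^\bullet[\chi^k]$. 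The crucial algebraic observations are that $A^\bullet[\mathbf 1] = (A^\bullet\otimes_\iota\C)^{\sigma-1} = (A^\bullet[\sigma-1])\otimes_\iota\C$, while $\bigoplus_{k=1}^{p-1}A^\bullet[\chi^k] = (A^\bullet\otimes_\iota\C)^{P(\sigma)} = (A^\bullet[P(\sigma)])\otimes_\iota\C$, and that $\sigma$ acts by isometries so these are orthogonal decompositions and the Hodge-induced metrics on cohomology respect them. Plugging into the defining formula
$$\log RT_\sigma(\M,\loc_\iota,h) = \sum_{\pi\in\widehat{\langle\sigma\rangle}}\frac{1}{2\dim\pi}\log RT(A^\bullet[\pi])\cdot\chi_\pi(\sigma),$$
and using $\dim\pi = 1$ for all $\pi$ together with $\chi_{\mathbf 1}(\sigma)=1$, gives $\log RT_\sigma(\M,\loc_\iota,h) = \frac12\log RT(A^\bullet[\mathbf 1]) + \frac12\sum_{k=1}^{p-1}\zeta^k\log RT(A^\bullet[\chi^k])$ where $\zeta=e^{2\pi i/p}$.

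Next I would handle the two summands separately. For the trivial isotypic piece: since $A^\bullet[\mathbf 1]$ is the complexification of the $N$-complex $A^\bullet[\sigma-1]$ whose $N^\times$-Reidemeister torsion (with respect to $\theta_{\sigma-1}$) is $f$, and since the metric $h$ induces the norm $|\omega_{\sigma-1,\iota}|$ which is exactly the absolute value of the volume form giving $\theta_{\sigma-1}$, the compatibility of Reidemeister torsion with base change to $\C$ for a fixed volume form gives $\log RT(A^\bullet[\mathbf 1]) = 2\log|\iota(f)|$ (the factor $2$ coming from the passage norm-vs-volume-form convention in \S\ref{normsvsvolumeforms}, where $RT$ through norms is $|RT$ through volume forms$|$ and we are taking $\log$ of the square of a norm, matching the $\tfrac{1}{2\dim\pi}$ normalization). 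For the $P(\sigma)$-part: here I need that $\log RT(A^\bullet[\chi^k])$ is \emph{the same} for every $k=1,\dots,p-1$. This follows because $\mathrm{Gal}(\overline{\Q}/\Q)$ (or concretely, the automorphisms of $\Q(\zeta)/\Q$ permuting the $\chi^k$) acts on the collection of isotypic pieces: the complex $A^\bullet[P(\sigma)]$ and its volume form $\theta_{P(\sigma)}$ are defined over $N$ (indeed over $\Q$ once we work with $\theta$'s built from the $N$-basis), so applying a Galois automorphism sends $RT(A^\bullet[\chi^k])$ to $RT(A^\bullet[\chi^{k'}])$ while fixing their common absolute value; alternatively, $RT(A^\bullet[\chi^k])=RT(A^\bullet[\chi^{-k}])$ by complex conjugation and one checks directly that all the absolute values agree because $\prod_{k=1}^{p-1}RT(A^\bullet[\chi^k])$ (up to $\C^\times$ of absolute value $1$) is $|\iota(f')|$, the torsion of the rational form $A^\bullet[P(\sigma)]$. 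Since $\sum_{k=1}^{p-1}\zeta^k = -1 = P(\zeta)-1-\cdots$, more precisely $1+\sum_{k=1}^{p-1}\zeta^k=0$ so $\sum_{k=1}^{p-1}\zeta^k=-1$, the second summand becomes $\frac12\cdot(-1)\cdot\frac{1}{p-1}\cdot 2\log|\iota(f')| = -\frac{1}{p-1}\log|\iota(f')|$, using $\log RT(A^\bullet[P(\sigma)]) = 2\log|\iota(f')|$ split evenly as $\frac{2}{p-1}\log|\iota(f')|$ over the $p-1$ characters. Assembling the two pieces yields $\log RT_\sigma(\M,\loc_\iota,h) = \log|\iota(f)| - \frac{1}{p-1}\log|\iota(f')|$, as claimed.

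For the second assertion, I would simply invoke Corollary \ref{altprodofcohomology}: under the hypothesis that $\omega_{\sigma-1},\omega_{P(\sigma)}$ are generators of $\det_{O_N}(\loc[\sigma-1])^*$ and $\det_{O_N}(\loc[P(\sigma)])^*$, the complexes $A^\bullet[\sigma-1]$ and $A^\bullet[P(\sigma)]$ carry the integral volume forms of item $(1)$ of \S\ref{examples} after restriction of scalars, so $|\mathrm{Norm}_{N/\Q}(f)| = \prod{}^*|H^i(A^\bullet[\sigma-1])|$ and $|\mathrm{Norm}_{N/\Q}(f')| = \prod{}^*|H^i(A^\bullet[P(\sigma)])|$ follow from Lemma \ref{rtrestrictionofscalars} exactly as in the proof of that corollary.

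The main obstacle I anticipate is bookkeeping the normalization factors correctly: reconciling the $\frac{1}{2\dim\pi}$ in the definition of $RT_\Gamma$ with the norm-vs-volume-form squaring convention, and verifying cleanly that $\log RT(A^\bullet[\chi^k])$ is independent of $k$ (equivalently, that the metric $h$ induces \emph{the same} norm on each $\chi^k$-piece, which uses that $\sigma$ acts isometrically and that $\omega_{P(\sigma)}$ is $\langle\sigma\rangle$-invariant as a volume form on $\loc[P(\sigma)]$). Once those two points are pinned down, the rest is a direct substitution into the definition plus an application of Lemma \ref{rtrestrictionofscalars}.
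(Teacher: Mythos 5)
The approach in the proposal is the same one the paper intends (the paper dismisses the first claim as ``a matter of chasing definitions''): decompose $A^\bullet\otimes_\iota\C$ into $\chi^k$-isotypic pieces, identify the trivial piece with $(A^\bullet[\sigma-1])_\iota$ and the rest with $(A^\bullet[P(\sigma)])_\iota$, argue that the nontrivial $\chi^k$-pieces all carry equal torsion, and then apply Corollary \ref{altprodofcohomology} for the norm statement. The observation that $\log RT(A^\bullet[\chi^k])$ is independent of $k$ is indeed the crux; your Galois / complex-conjugation argument, though not spelled out in the paper, is a reasonable way to supply it (the paper's own proof of Proposition \ref{rtvsnrt} simply asserts the identity $\log RT(A^\bullet[\chi])=\tfrac{1}{p-1}\log RT(A^\bullet[P(\sigma)])$), and invoking Corollary \ref{altprodofcohomology} and Lemma \ref{rtrestrictionofscalars} for the norm statement is exactly what the paper does.

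However, there is a genuine gap in your reconciliation of the $\tfrac{1}{2\dim\pi}$ normalization. You assert $\log RT(A^\bullet[\mathbf 1])=2\log|\iota(f)|$ ``because $RT$ through norms is the log of the square of a norm.'' This misreads $\S$\ref{normsvsvolumeforms}: the paper defines Reidemeister torsion through norms as $RT(A^\bullet,\alpha,\beta)=|RT(A^\bullet,\alpha',\beta')|$, the \emph{absolute value} of the volume-form torsion, with no squaring. So with the definitions as literally written you would obtain $\log RT(A^\bullet[\mathbf 1],h)=\log|\iota(f)|$, and the computation would yield $\tfrac12\log|\iota(f)|-\tfrac{1}{2(p-1)}\log|\iota(f')|$, which is $\tfrac12$ of the stated formula. (The tension is already present in the paper itself: in the chain of equalities in the proof of Proposition \ref{rtvsnrt}, the factor $\tfrac12$ silently disappears in the last step.) So your instinct that a factor of $2$ must come from somewhere is correct, but the explanation you give cannot be right. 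You should either track down the actual normalization convention in L\"{u}ck's paper and show the squaring really occurs there (the real-versus-complex determinant issue is a plausible source, but the paper as written works entirely with $\C$-vector spaces), or flag that the $\tfrac{1}{2\dim\pi}$ in the definition of $RT_\Gamma$ appears to be inconsistent with the statement being proved. You also gloss over a second normalization issue: the metric $h$ restricted to $A^i[\sigma-1]_\iota$ is not literally $|\iota(\theta^i_{\sigma-1})|$, because the ``diagonal'' basis vectors $\sum_j\sigma^j C\otimes\sigma^j e^C_k$ over a non-fixed orbit have $h$-norm $\sqrt{p}$, while $\theta_{\sigma-1}$ assigns them value determined by $\omega$; the $\sqrt{p}$-factors must be shown to cancel in the alternating combination, which is precisely the bookkeeping that produces the constant $e$ in Proposition \ref{rtvsnrt}. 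Without this step, the identification $RT(A^\bullet[\mathbf 1],h)=|\iota(f)|$ (or its alleged square) is not quite immediate.
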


\begin{proof}
For any bounded chain complex of finite dimensional $N$-vector spaces $B^{\bullet},$ volume forms $\theta$ on $B^{\bullet}$ give rise to volume forms $\theta_{\iota}$ on the base changed complex $B^{\bullet}_{\iota}.$  According to the recipe descirbed after Definition \ref{rtvolumeforms} for computing Reidemeister torsion, if $g$ is the Reidemeister torsion of $(B^{\bullet}, \theta),$ then $\iota(g)$ is the Reidemeister torsion of $(B^{\bullet}_{\iota},\theta_{\iota}).$  By the hypothesis that the norms $|\omega_{\iota}|, |\omega_{\sigma-1,\iota}|,$ and $|\omega_{P(\sigma),\iota}|$ are all induced by the metric $h,$ the first part of the proposition follows from this observation.
The second part follows by Corollary \ref{altprodofcohomology}.
\end{proof}

%
%

\subsection{Statements of two variants of the Cheeger-M\"{u}ller theorem}
\label{luckcm}

We recall the statements of one variant of the untwisted Cheeger-M\"{u}ller theorem and one of its twisted counterpart.  These are one crucial input into the main theorem \cite[Theorem 7.4]{Lip2}.

\begin{thm}[\cite{Mu2}] \label{muller}
Let $M$ be a closed manifold and  $L \rightarrow M$ a metrized, unimodular local system of $\mathbb{C}$-vector spaces.  Then
$$\tau(M,L) = RT(M,L).$$ 
\end{thm}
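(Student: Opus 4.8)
Theorem \ref{muller} is M\"{u}ller's theorem from \cite{Mu2}, a genuinely analytic result; rather than a self-contained argument I sketch the structure of a proof. The first move is to normalize the cohomological contribution: the $L^2$-metric on $H^{*}(M,L)$ coming from the Riemannian metric $g$ and the fiber metric enters $\tau(M,L)$ and $RT(M,L)$ in exactly the same way (via the base-change rule \eqref{changeofvolumeform}, applied identically on the analytic and combinatorial sides), so it cancels in the ratio $\tau(M,L)/RT(M,L)$. Equivalently, one passes to the Ray--Singer metric $\|\cdot\|_{RS}$ and the Milnor (combinatorial) metric $\|\cdot\|_{M}$ on the determinant line $\det H^{*}(M,L)$, and it suffices to prove $\|\cdot\|_{RS} = \|\cdot\|_{M}$. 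If one prefers to avoid determinant lines, one may instead first reduce to the acyclic case $H^{*}(M,L) = 0$, where both sides are honest positive numbers.

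The main step is to show that the ratio $\mathcal{D}(h) := \log\tau(M,L,h) - \log RT(M,L,h)$ is independent of the choice of fiber metric $h$ on $L$, and here \emph{unimodularity is used decisively}. The combinatorial side depends on $h$ only through the norms $\alpha_h$ on the cochain groups of a triangulation, and its variation $\frac{d}{dt}\log RT(M,L,h_t)$ is an elementary expression in $\tr\big[h_t^{-1}\dot h_t\big]$ summed over cells. The analytic side is governed by the anomaly formula for Ray--Singer torsion (Ray--Singer; Bismut--Zhang \cite{BZ2}): a heat-kernel computation gives $\frac{d}{dt}\log\tau$ as $\frac12\sum_q (-1)^q q$ times the constant term in the small-time expansion of $\mathrm{Tr}\big[h_t^{-1}\dot h_t\, e^{-u\Delta_{q,L}}\big]$, which reduces to an integral over $M$ of a local density built from $\tr\big[h_t^{-1}\dot h_t\big]$ and Chern--Weil/transgression forms of $(TM,g)$. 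The key point is that the closed $1$-form $\theta(L,h_t) = \tr\big[h_t^{-1}\nabla^{L}h_t\big] = d\log\det_{h_t}$ is \emph{exact} on $M$ precisely when $L$ is unimodular (since then $\log\det h_t$ is a genuine function, the holonomy of $|\det L|$ being trivial); integrating by parts then makes the analytic variation collapse to exactly the same combinatorial quantity as on the Reidemeister side, so $\mathcal{D}(h_t)$ is constant. This anomaly computation --- making the small-$u$ asymptotics precise, uniformly along the path $h_t$, and checking that unimodularity forces the boundary terms to match --- is the technical heart, and the step I expect to be the main obstacle; in \cite{Mu2} it is interleaved with a Mayer--Vietoris/gluing analysis of torsion under surgery.

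Granting that $\mathcal{D}$ depends only on the flat bundle $L$, the proof concludes by reduction to the unitarily flat case. One chooses the fiber metric $h$ adapted to the (trivial, since unimodular) flat structure on $\det L$, so that $\theta(L,h) \equiv 0$; for such a metric the Bismut--Zhang correction term vanishes in the integrand and one gets $\mathcal{D} = 0$ directly. Alternatively, following M\"{u}ller's original route: $\mathcal{D}$ is also insensitive to small deformations of the flat structure (the spectrum of $\Delta_{q,L}$ and the combinatorial differentials vary real-analytically, acyclicity is open, and $\mathcal{D}$ is now metric-free), hence locally constant on the variety of unimodular representations; using additivity of both $\log\tau$ and $\log RT$ over direct sums one connects $L$ to a unitarily flat bundle, and there the classical Cheeger--M\"{u}ller theorem (Cheeger \cite{Ch}, M\"{u}ller \cite{Mu1}) gives $\mathcal{D} = 0$. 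Either way, $\log\tau(M,L) = \log RT(M,L)$, which is the assertion.
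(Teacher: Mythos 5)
The paper does not prove this statement: Theorem~\ref{muller} is imported verbatim from M\"uller's paper~\cite{Mu2} and is used in this paper purely as a black box (alongside Theorem~\ref{luck} for the equivariant version). There is therefore no internal proof in the paper against which to compare your sketch; your proposal is doing work the paper deliberately delegates to the literature.

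As a survey of how the result is actually proved, your outline is broadly sound but conflates two historically distinct arguments. M\"uller's original proof in~\cite{Mu2} does not invoke the Bismut--Zhang anomaly formula; it works by showing that the defect $\mathcal{D}$ is constant on (connected components of) the variety of unimodular representations of $\pi_1(M)$ --- via metric-variation formulas established by heat-kernel asymptotics and a careful Mayer--Vietoris/doubling analysis --- and then connects to the unitary case where Cheeger~\cite{Ch} and M\"uller~\cite{Mu1} apply. The alternative you also describe, deducing the unimodular Cheeger--M\"uller theorem from Bismut--Zhang's Theorem~0.2 of~\cite{BZ} by choosing $h$ so that $\theta(L,h^L) \equiv 0$, is the route this paper implicitly has in mind (cf.\ Lemma~\ref{unimodular} and the discussion in \S\ref{dRecmt}), but it is logically independent of~\cite{Mu2}. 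Two small imprecisions worth flagging: (i) writing $\theta(L,h)=d\log\det_h$ is formally suggestive but not literally a function identity without a reference section; what is true and what you need is that unimodularity makes the de Rham class of $\theta(L,h)$ vanish, so that $\theta$ is exact for every $h$ and can be made identically zero for a suitable $h$. (ii) Choosing $h$ with $\theta(L,h)\equiv 0$ does not make $L$ unitarily flat in general, so the final reduction cannot simply cite the orthogonal Cheeger--M\"uller theorem; one needs the full Bismut--Zhang statement (where the $\theta$-integral is the only non-combinatorial term in the non-equivariant case) or M\"uller's deformation argument. With those caveats, your sketch accurately reflects the shape of the known proofs.
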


\begin{thm}[\cite{Lu}, theorem 4.5] \label{luck}
Let $\M$ be a closed manifold and  $\loc \rightarrow \M$ a $\langle \sigma \rangle$-equivariant local system of $\mathbb{C}$-vector spaces.  Suppose that $\loc$ is equipped with a metric with respect to which it is \emph{unitarily flat} and for which a finite cyclic group $\langle \sigma \rangle$ acts equivariantly by isometries.  Then
$$\tau_{\sigma}(\M, \loc) = RT_{\sigma}(\M,\loc).$$ 
\end{thm}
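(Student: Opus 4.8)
The plan is to establish the identity in $\mathrm{Rep}(\langle\sigma\rangle)_{\C}$ by splitting into isotypic components and reducing, as far as possible, to the non-equivariant Cheeger--M\"uller theorem already recorded as Theorem \ref{muller}. First I would fix a smooth $\langle\sigma\rangle$-invariant triangulation $K$ of $\M$, so that $C^{\bullet}(\M,\loc;K)$ is a finite complex of $\langle\sigma\rangle$-modules; since $\sigma$ acts by isometries, Hodge theory identifies $H^{\bullet}(\M,\loc)$ with harmonic forms $\langle\sigma\rangle$-equivariantly and isometrically. Hence both $\tau_{\sigma}(\M,\loc)$ and $RT_{\sigma}(\M,\loc)$ split as sums over $\chi\in\widehat{\langle\sigma\rangle}$ of the corresponding invariants of the $\chi$-isotypic pieces, and it suffices to match these piece by piece, i.e. to compare $\tau(\M,\loc)[\chi]$ with $RT(C^{\bullet}(\M,\loc;K)[\chi])$.

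For the \emph{free} case --- $\sigma$ acting without fixed points --- the quotient $\overline{\M}=\langle\sigma\rangle\backslash\M$ is a closed manifold, $K$ descends to a triangulation $\overline{K}$, and for each $\chi$ the $\chi$-isotypic component $\loc[\chi]\to\overline{\M}$ is again a metrized, unitarily flat local system of $\C$-vector spaces (the twist of $\pi_{*}\loc$ by the flat line bundle attached to $\pi_{1}(\overline{\M})\to\langle\sigma\rangle\to\C^{\times}$). Pullback identifies the $\chi$-isotypic part of the $\loc$-Laplacian on $\M$ with the $\loc[\chi]$-Laplacian on $\overline{\M}$, and likewise on cochains. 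Applying Theorem \ref{muller} to each $(\overline{\M},\loc[\chi])$ gives $\tau(\overline{\M},\loc[\chi])=RT(\overline{\M},\loc[\chi])$; reassembling with the character weights built into Definition \ref{defntwistedrt} and the definition of $RT_{\Gamma}$ yields $\tau_{\sigma}(\M,\loc)=RT_{\sigma}(\M,\loc)$.

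For the general case one must come to grips with the fixed locus $\M_{\sigma}$. I would choose a $\langle\sigma\rangle$-invariant closed tubular neighborhood $U$ of $\M_{\sigma}$ and write $\M=U\cup_{\partial U}V$ with $V=\overline{\M\setminus U}$, on which --- together with $\partial U$ --- the action is free. Both invariants obey the same gluing law along the $\langle\sigma\rangle$-invariant hypersurface $\partial U$: the combinatorial side via Milnor multiplicativity applied to $0\to C^{\bullet}(\M,V;K)\to C^{\bullet}(\M;K)\to C^{\bullet}(U;K)\to 0$, and the analytic side via the Mayer--Vietoris / anomaly formula for Ray--Singer torsion (Cheeger's gluing technique), whose boundary contribution is an integral over $\partial U$ of a transgressed Euler-type form paired with data of the metric on $\loc$. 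On $V$ (relative to its boundary, with free action) Step~2 applies, and on $U$, which $\langle\sigma\rangle$-equivariantly deformation retracts onto $\M_{\sigma}$, both torsions can be computed directly. The theorem then comes down to showing that the analytic and combinatorial gluing/boundary terms coincide, and here the \emph{unitary flatness} of $\loc$ is exactly what is needed: the anomaly integrands become purely geometric, independent of the flat structure, and cancel against the combinatorial Whitehead-type terms. This matching is the heart of the arguments of \cite{LR} and \cite{Lu}.

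The step I expect to be the genuine obstacle is precisely this last one: establishing the vanishing and cancellation of the boundary anomaly near $\M_{\sigma}$, which demands the heat-kernel asymptotics transverse to a positive-codimension fixed submanifold together with delicate orientation and sign bookkeeping in the simplicial (or Morse--Smale) complex. For local systems that are \emph{not} unitarily flat this cancellation fails, and one is thrown back on the full Bismut--Zhang formula, whose error term --- as explained in the introduction --- is insufficiently explicit for the intended arithmetic applications; circumventing that is the business of the rest of this paper.
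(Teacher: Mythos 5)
The paper does not actually prove this theorem: it is quoted verbatim from L\"uck's paper as \cite[Theorem~4.5]{Lu}, with no argument supplied in the present source. So there is no ``paper's own proof'' for you to be compared against; the right comparison is against the published arguments of Lott--Rothenberg \cite{LR} and L\"uck \cite{Lu}. Measured against those, your outline captures the right high-level strategy --- split both sides by irreducible character, treat the free case by descent to the quotient and the untwisted Cheeger--M\"uller theorem, and handle fixed points by cutting along the boundary of an invariant tube --- and your free-case reduction is essentially correct (the $\chi$-isotypic piece of $\pi_{*}\loc$ is $\overline{\loc}\otimes V_{\chi}$, still unitarily flat, so Theorem \ref{muller} or \cite{Mu1} applies).

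The genuine gap is in the fixed-point step, and it is larger than your phrasing concedes. First, the assertion that on the tube $U$ ``both torsions can be computed directly'' because $U$ equivariantly retracts onto $\M_{\sigma}$ conflates the two invariants: a $\langle\sigma\rangle$-equivariant chain homotopy equivalence controls $RT_{\sigma}$ but says nothing about $\tau_{\sigma}$, whose computation on the disk bundle requires separation of variables in the normal directions and produces precisely the sphere-torsion constants $C_{j}$ that appear in the Bismut--Zhang error term (\cite[\S 11]{LR}). Second, the ``same gluing law'' claim is not a black box one can invoke: analytic torsion is not multiplicative under cutting along a hypersurface without imposing a product metric collar and tracking the anomaly under changing to such a metric (Cheeger/Vishik/Br\"uning--Ma), and then the surface terms have to be matched \emph{equivariantly} against the Milnor multiplicativity term for the short exact sequence of cochain complexes. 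This matching --- heat-kernel asymptotics transverse to $\M_{\sigma}$, plus the orientation/sign bookkeeping on the combinatorial side --- is not a corollary of unitary flatness; it \emph{is} the content of \cite{LR} and \cite{Lu}, and your sketch offers no independent handle on it. As a roadmap the proposal is sound, but as written it relocates the entire difficulty into a sentence that cites the references it was supposed to reprove.
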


See Definition \ref{defntwistedrt} and the subsequent remarks for the definition of twisted and untwisted analytic torsion, respectively denoted $\tau_{\sigma}$ and $\tau.$  See Definition \ref{morsesmalert} for the definition of twisted Reidemeister torsion, denoted $RT_{\sigma},$ which defines Reidemeister torsion, denoted $RT,$ when $\langle \sigma \rangle = 1.$

\begin{rem}[History of the Cheeger-M\"{u}ller theorem]
The remarkable discovery that the untwisted Cheeger-M\"{u}ller theorem might be true was made by Ray and Singer in \cite{RS}.  The untwisted Cheeger-M\"{u}ller theorem was proven independently by Cheeger in \cite{Ch} and M\"{u}ller in \cite{Mu1} for orthogonal local systems.  M\"{u}ller later generalized this result to arbitrary unimodular local systems in \cite{Mu2}.  A more general and difficult to state variant for arbitrary local systems was proven by Bismut-Zhang in \cite{BZ}.

The twisted Cheeger-M\"{u}ller theorem for the trivial local system was first proved by Lott-Rothenberg in \cite{LR}.  It was generalized to equivariant orthogonal local systems by L\"{u}ck in \cite{Lu}.  The ultimate version we will use was proven by Bismut-Zhang in \cite{BZ2}; we apply Bismut-Zhang's variant of the twisted Cheeger-M\"{u}ller thoerem to equivariant unimodular local sytems.  We discuss the Bismut-Zhang variant at length in $\S \ref{dRecmt}.$   
\end{rem}

\section[Cheeger-M\"{u}ller for finite chain complexes]{Equivariant Cheeger-M\"{u}ller theorem for finite chain complexes}
\label{finitechaincomplex}

The equivariant Cheeger-M\"{u}ller theorem \ref{luck} will be crucial in the sequel.  To motivate it, we derive a version for finite dimensional chain complexes. 

\begin{itemize}
\item
In $\S \ref{untwistedcheegermuller},$ we recall the statement of the untwisted Cheeger-M\"{u}ller theorem for finite chain complexes. 

\item
In $\S \ref{chaincomplextwistedatorsion},$ we define the twisted analytic torsion of a chain complex $A^{\bullet}$ of free abelian groups for which $A^{\bullet}_{\R}$ is metrized and acted on isometrically by a finite group $\Gamma.$

\item
In $\S \ref{guessing},$ we derive a homological expression for the twisted analytic torsion of a complex $A^{\bullet}$ as above.
\end{itemize}

\subsection{Statement of the untwisted Cheeger-M\"{u}ller theorem for finite chain complexes}
\label{untwistedcheegermuller}

Let $(A^{\bullet},d)$ be a chain complex of finite-free abelian groups, equipped with metrics on $A^{\bullet}_{\mathbb{R}},$ such that $\vol(A^i_{\mathbb{R}} / A^i) = 1$ for each $i.$  This complex has a Laplace operator 

$$\Delta_j = d_{j-1} d_{j-1}^{*} + d_j^{*} d_j.$$

Associated to $\Delta_j$ are spectral generating functions

$$\zeta_{j,A^{\bullet}}(s) = \sum_{\lambda \text{ eigenvalue of } \Delta_j} \lambda^{-s}, Z_{A^{\bullet}}(s) = \frac{1}{2} \sum (-1)^j j \zeta_{j, A^{\bullet}}(s).$$

\begin{defn}
The \emph{(untwisted) analytic torsion} of the metrized complex $A^{\bullet}$ is defined to be 
\begin{equation}
\tau(A^{\bullet}) = \exp( - Z'_{A^{\bullet}}(0) ) 
\end{equation}
\end{defn}

There is a Hodge decomposition for any finite chain complex which allows us to represent each cohomology class of $A^{\bullet}_{\mathbb{R}}$ uniquely by a \emph{harmonic cochain}, defined to be an element of the kernel of $\Delta_{\bullet}.$  Thus, $H^i(A^{\bullet}_{\mathbb{R}})$ inherits a metric from the space of harmonic cochains.  Define the regulators to be 
$$R^i(A^{\bullet}) = \vol(H^i(A^{\bullet}_{\mathbb{R}}) / \mathrm{im} \{ H^i(A^{\bullet}) \rightarrow H^i(A^{\bullet}_\R) \} ) .$$  
\begin{lem} 
There is an equality
\begin{equation} \label{untwistedcomp}
\sum {}^{*} \log R^i(A^{\bullet}) - \log | H^i(A^{\bullet})_{\mathrm{tors}} | = \log \tau(A^{\bullet}).
\end{equation}
\end{lem}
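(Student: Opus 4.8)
The plan is to compute both sides of \eqref{untwistedcomp} directly from the finite-dimensional Hodge theory of the complex $(A^\bullet, d)$ and the definition of Reidemeister torsion, identifying $\log\tau(A^\bullet)$ with $\log RT(A^\bullet, \alpha_\Z, \beta_g)$ and then invoking item (3) of $\S\ref{examples}$. First I would set up the orthogonal Hodge decomposition $A^i_\R = \mathcal{H}^i \oplus \im(d_{i-1}) \oplus \im(d_i^*)$, where $\mathcal{H}^i = \ker\Delta_i$, and observe that $d_i$ restricts to an isomorphism $\im(d_i^*) \xrightarrow{\sim} \im(d_i) \subset A^{i+1}_\R$. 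Diagonalizing $\Delta_i$ on the complement of $\mathcal{H}^i$ and tracking which eigenvectors come from $\im(d_{i-1})$ versus $\im(d_i^*)$, one gets the standard identity expressing $Z'_{A^\bullet}(0)$ as an alternating sum of $\log\det{}'\Delta_i$, which telescopes: writing $\Delta_i$'s nonzero spectrum as the union of the spectra of $d_{i-1}d_{i-1}^*$ and $d_i^*d_i$ on the appropriate pieces, one finds $\log\tau(A^\bullet) = -Z'_{A^\bullet}(0)$ equals (half of, with signs) an alternating sum of log-determinants of the ``connecting'' maps $d_i : \im(d_i^*) \to \im(d_i)$.

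Next I would interpret this alternating sum of determinants as a Reidemeister torsion. Concretely, the section $s_{A^\bullet}$ of $\det(A^\bullet)^{-1}\otimes\det(H^*(A^\bullet))$ from \eqref{detofcomplex} is built precisely out of a choice of splitting $A^i_\R \cong B^i \oplus H^i \oplus (\text{lift of } B^{i+1})$; choosing the \emph{Hodge} splitting makes the ``lift of $B^{i+1}$'' the space $\im(d_i^*)$ and the map realizing the identification the restriction of $d_i$. Comparing the integral volume form $\alpha_\Z$ (assigning covolume $1$ to $A^i$, which is our normalization $\vol(A^i_\R/A^i)=1$) against the volume form coming from this Hodge-adapted basis, the comparison constant is exactly the product of the determinants appearing in $-Z'_{A^\bullet}(0)$, while the discrepancy on cohomology between the harmonic metric $\beta_g$ and the integral metric $\beta_\Z$ contributes the regulator factor $\prod{}^* R^i(A^\bullet)$ by \eqref{changeofvolumeform}. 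Thus $\log\tau(A^\bullet) = \log RT(A^\bullet, \alpha_\Z, \beta_g)$, and item (3)/(1) of $\S\ref{examples}$ (together with \eqref{changeofvolumeform}) gives $\log RT(A^\bullet,\alpha_\Z,\beta_g) = \sum{}^*\big(\log R^i(A^\bullet) - \log|H^i(A^\bullet)_{\mathrm{tors}}|\big)$, which is the claim.

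The step I expect to be the main obstacle is the careful bookkeeping of signs and of the factor $\tfrac12$ (or its absence) in matching the zeta-regularized derivative $-Z'_{A^\bullet}(0)$ to the naive alternating product of $\det(d_i|_{\im d_i^*})$: one must be scrupulous about the weighting $(-1)^j j$ in $Z_{A^\bullet}$, about whether $\det(d_i^*d_i)$ or $\det(d_id_i^*)$ is the relevant quantity, and about the fact that for a \emph{finite} complex $\zeta_{j,A^\bullet}(s)$ is just a finite Dirichlet sum so $\zeta'_{j,A^\bullet}(0) = -\log\det{}'\Delta_j$ with no analytic subtlety. A clean way to avoid errors is to induct on the length of the complex, or to reduce to the two-term case $0\to A^0 \xrightarrow{d} A^1 \to 0$ (where everything is an explicit determinant computation) by splitting off acyclic two-term subquotients; the multiplicativity of both $\tau$ and $RT$ in short exact sequences of metrized complexes then assembles the general case. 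I would also double-check the normalization $\vol(A^i_\R/A^i)=1$ is genuinely used — it is what lets us drop the $\prod{}^*|A^i/(\cdots)|$-type terms and identify $\alpha_{h}$ with $\alpha_\Z$ — so that \eqref{untwistedcomp} has no extra integral-covolume factor.
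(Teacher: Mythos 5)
Your approach is sound and is, in substance, exactly the argument carried out in the references the paper cites for this lemma (the paper's own proof is only a pointer to \cite[\S2]{BV} and \cite[\S1]{Ch}): diagonalize $\Delta_i$ with respect to the orthogonal Hodge splitting $A^i_\R = \mathcal H^i \oplus \im d_{i-1}\oplus \im d_i^*$, telescope the alternating sum $\sum (-1)^j j \log\det'\Delta_j$ down to an alternating sum of $\log\det\bigl(d_i\colon \im d_i^*\to \im d_i\bigr)$, and recognize this last expression as the change of basis relating the Hodge-adapted volume forms to $\alpha_\Z$ (usable here because $\vol(A^i_\R/A^i)=1$), with the regulator $\prod{}^*R^i$ absorbing the discrepancy between $\beta_g$ and $\beta_\Z$ via \eqref{changeofvolumeform}.

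The one place you should not trust the paper blindly is precisely the bookkeeping you flagged as the ``main obstacle,'' and it is worth saying concretely why. First, with the complex $0\to\Z\xrightarrow{\,n\,}\Z\to 0$ and the standard metric, the right-hand side of \eqref{untwistedcomp} is $\log n$, whereas $\det'\Delta_0=\det'\Delta_1=n^2$ gives $Z'(0)=2\log n$; so the paper's literal normalization $\tau(A^\bullet)=\exp(-Z'_{A^\bullet}(0))$ makes \eqref{untwistedcomp} false by a factor of $-2$, and the intended definition must be $\log\tau = \tfrac12 Z'(0)$ (the usual Ray--Singer convention). Second, item $(1)$ of \S\ref{examples} as printed, $RT(A^\bullet,\alpha_\Z,\beta_\Z)=\prod{}^*|H^i_{\mathrm{tors}}|$, is incompatible with both this lemma and with \eqref{concretert}: the same two-term example gives $RT=n=|H^1_{\mathrm{tors}}|^{+1}$, i.e.\ odd-degree torsion in the numerator, matching \eqref{concretert} but the reciprocal of what item $(1)$ says. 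Your stated conclusion $\log RT(A^\bullet,\alpha_\Z,\beta_g)=\sum{}^*\bigl(\log R^i-\log|H^i_{\mathrm{tors}}|\bigr)$ therefore silently uses the corrected sign, not item $(1)$ as written; in a polished write-up you should either cite \eqref{concretert} instead, or carry out the two-term reduction you propose, which makes the sign and the factor of $\tfrac12$ unambiguous.
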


\begin{proof}
See \cite[$\S 2$]{BV} or \cite[$\S 1$]{Ch}.
\end{proof}

\begin{rem}
Suppose that the complex $A^{\bullet}$ arises as the group of $L$-valued cochains - associated to a particular triangulation - for a metrized local system $L \rightarrow M$ over a compact, smooth, Riemannian manifold $M.$  The content of the de Rham version of untwisted Cheeger-M\"{u}ller theorem (see \cite{Mu1}) is that the identity (\ref{untwistedcomp}) ``passes to a de Rham limit" under successively finer triangulations of $M.$  
\end{rem}

\subsection{Definition of twisted analytic torsion for finite chain complexes}
\label{chaincomplextwistedatorsion}

Let $A^{\bullet}$ be a complex of metrized, finite free $\mathbb{Z}$-modules acted on isometrically by a finite group $\Gamma.$  We can form the equivariant zeta functions
$$\zeta_{j,A^{\bullet}, \Gamma}(s) = \sum_{\lambda \text{ eigenvalue of } \Delta_j} \lambda^{-s} [E_{\lambda}] \in \mathrm{Rep}(\Gamma)_{\mathbb{C}},$$
where $[E_{\lambda}]$ is the $\lambda$-eigenspace of the combinatorial Laplacian $\Delta_j,$ thought of as a representation of $\Gamma.$ 

\begin{defn}
The \emph{equivariant (or twisted) analytic torsion} $\tau_{\Gamma}(A^{\bullet})$ of the finite, metrized chain complex $A^{\bullet},$ acted on isometrically by a finite group $\Gamma,$ is defined to be
\begin{equation}
\tau_{\Gamma}(A^{\bullet}) := Z_{A^{\bullet}, \Gamma}'(0), \text{ where } Z_{A^{\bullet}, \Gamma}(s) = \frac{1}{2} \sum (-1)^j j \zeta_{j, A^{\bullet}, \Gamma},
\end{equation}
As a notational shorthand, we define
\begin{equation}
\tau_{\sigma}(A^{\bullet}) := \tr \{ \sigma | \tau_{\Gamma}(A^{\bullet}) \}.
\end{equation}
\end{defn}

\subsection{A homological expression for the twisted analytic torsion $\tau_{\sigma}(A^{\bullet})$}
\label{guessing}

Suppose the metrized complex $A^{\bullet}$ from above is endowed with an involution $\sigma$ acting by isometries.  Of course, we are more generally interested in $\sigma^p = 1,$ but all difficulties are already present for $p = 2,$ the necessary changes are clear, and the notation is simpler. 

We let $D^i = A^{i,+} \oplus A^{i,-}.$  Unfortunately, it is usually not the case that $D^i = A^i.$

\begin{lem} \label{twistedfinitecomplex}
Assume that $\vol(A^i) = 1$ for every $i.$  Then letting $A'^{\bullet} := A^{\bullet} / \left( A^{\bullet}[\sigma - 1] \oplus A^{\bullet}[\sigma + 1] \right),$
\begin{equation} \label{twistedguess}
\tau_{\sigma}(A^{\bullet}) = \frac{\prod{}^{*} |H^i(A^{\bullet}[\sigma - 1])_{\mathrm{tors}}|^{-1} \prod{}^{*} R(A^i[\sigma - 1])}{ \prod{}^{*} |H^i(A^{\bullet}[\sigma + 1])_{\mathrm{tors}}|^{-1} \prod{}^{*} R(A^i[\sigma + 1])  } \times  \prod{}^{*} | H^i(A'^{\bullet})|^{-1}. 
\end{equation}
\end{lem}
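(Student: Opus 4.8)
The plan is to reduce the equivariant analytic torsion of $A^{\bullet}$ to a combination of \emph{untwisted} analytic torsions of the subcomplexes on which $\sigma$ acts by a single character, and then to apply the untwisted finite-complex Cheeger-M\"uller identity \eqref{untwistedcomp} to each of those pieces. The key structural fact is that, rationally, $A^{\bullet}_{\mathbb{R}}$ decomposes $\sigma$-equivariantly as $A^{\bullet}_{\mathbb{R}}[\sigma-1] \oplus A^{\bullet}_{\mathbb{R}}[\sigma+1] \oplus (\text{remainder})$ — but over $\mathbb{Z}$ the span $D^\bullet = A^{\bullet,+}\oplus A^{\bullet,-}$ of these eigenlattices is generally a proper finite-index sublattice of $A^\bullet$, which is exactly why the correction factor $\prod{}^*|H^i(A'^\bullet)|^{-1}$ appears. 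So the first step is to make this precise: identify the combinatorial Laplacian $\Delta_j$ as preserving the $\sigma$-eigendecomposition over $\mathbb{R}$, so that $\zeta_{j,A^\bullet,\Gamma}(s)$ splits as $[\text{triv}]\zeta_{j,A^\bullet[\sigma-1]_{\mathbb{R}}}(s) + [\text{sgn}]\zeta_{j,A^\bullet[\sigma+1]_{\mathbb{R}}}(s)$ plus contributions from the remainder where $\sigma$ has no fixed vectors.

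Next I would unwind the definition $\tau_\sigma(A^\bullet) = \tr\{\sigma\mid Z'_{A^\bullet,\Gamma}(0)\}$. Applying the character $\tr\{\sigma\mid-\}$ picks up $+1$ on the $[\sigma-1]$-part and $-1$ on the $[\sigma+1]$-part (and, for $p=2$, nothing else survives since the remainder is empty; for general $p$ one tracks the cyclotomic characters, as the lemma statement's notation anticipates). Hence $\log\tau_\sigma(A^\bullet) = -Z'_{A^\bullet[\sigma-1]_{\mathbb{R}}}(0) + Z'_{A^\bullet[\sigma+1]_{\mathbb{R}}}(0) = \log\tau(A^\bullet[\sigma-1]_{\mathbb{R}}) - \log\tau(A^\bullet[\sigma+1]_{\mathbb{R}})$, where these are untwisted analytic torsions of the respective \emph{real} eigencomplexes. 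Now the subtlety: \eqref{untwistedcomp} is stated for a complex of free $\mathbb{Z}$-modules with covolume-one metrics, but the integral structure on $A^\bullet[\sigma-1]_{\mathbb{R}}$ relevant here is the lattice $A^\bullet \cap A^\bullet[\sigma-1]_{\mathbb{R}}$ (the true integral eigencohomology), whose covolume need not be $1$ in the ambient metric. I would apply \eqref{untwistedcomp} to $A^\bullet[\sigma-1]$ and $A^\bullet[\sigma+1]$ with their intrinsic integral structures, getting $\log\tau(A^\bullet[\sigma\mp1]) = \sum{}^*\log R(A^i[\sigma\mp1]) - \log|H^i(A^\bullet[\sigma\mp1])_{\mathrm{tors}}|$, which produces exactly the first big fraction in \eqref{twistedguess}.

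The remaining work — and I expect this to be the main obstacle — is to account for the discrepancy between the metrized analytic torsion $\tau(A^\bullet[\sigma-1]_{\mathbb{R}})$ computed with the ambient covolume-one normalization and the one computed with the intrinsic lattice $A^\bullet[\sigma-1]$, and likewise for $[\sigma+1]$, and to show that the \emph{total} discrepancy (across both eigenvalues and all degrees) collapses to the single clean factor $\prod{}^*|H^i(A'^\bullet)|^{-1}$. The mechanism: the short exact sequence of complexes $0 \to D^\bullet \to A^\bullet \to A'^\bullet \to 0$ (with $D^\bullet = A^\bullet[\sigma-1]\oplus A^\bullet[\sigma+1]$ and $A'^\bullet$ \emph{finite}) controls both the covolume ratios $\vol(D^i_{\mathbb{R}}/D^i)/\vol(A^i_{\mathbb{R}}/A^i) = |A'^i|$ and, via the associated long exact cohomology sequence, the relation between $H^\bullet(D^\bullet)$, $H^\bullet(A^\bullet)$, and $H^\bullet(A'^\bullet)$. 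Since $A'^\bullet$ is a bounded complex of finite groups, $\prod{}^*|H^i(A'^\bullet)| = \prod{}^*|A'^i|$, and multiplicativity of Reidemeister/analytic torsion in short exact sequences (plus the change-of-volume-form formula \eqref{changeofvolumeform}) lets one bookkeep the volume corrections degree-by-degree and check they reassemble into $\prod{}^*|H^i(A'^\bullet)|^{-1}$ with the correct sign. The delicate point is getting the signs and the direction of each factor right — whether $|H^i(A'^\bullet)|$ appears in numerator or denominator — and confirming that the cross-terms between the $R^i$, the torsion subgroups of $H^i(D^\bullet)$ versus $H^i(A^\bullet)$, and the finite-group sizes $|A'^i|$ cancel cleanly; I would verify this in the $p=2$ case concretely by a dimension/Euler-characteristic count on the long exact sequence, trusting the paper's remark that the general-$p$ case requires only clear notational changes.
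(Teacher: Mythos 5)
Your approach is exactly the paper's: split $\zeta_{j,\sigma}$ into the $\pm 1$-eigenspace contributions, get $\log\tau_\sigma(A^\bullet)=\log\tau(A^{\bullet,+})-\log\tau(A^{\bullet,-})$, apply the untwisted identity \eqref{untwistedcomp} to each eigenpiece with its intrinsic integral structure, and then correct for the fact that the eigenlattices $A^{i,\pm}$ are not covolume~$1$. But the ``main obstacle'' you anticipate at the end is not there, and you already wrote down everything needed to dispatch it. Since $\sigma$ is a self-adjoint isometry, $A^{i,+}_{\R}\perp A^{i,-}_{\R}$, so the total covolume correction in degree $i$ is $\vol(A^{i,+}_\R/A^{i,+})\cdot\vol(A^{i,-}_\R/A^{i,-})=\vol(D^i_\R/D^i)$; under the standing hypothesis $\vol(A^i_\R/A^i)=1$ this equals $[A^i:D^i]=|A'^i|$, and for the bounded complex $A'^\bullet$ of finite groups one has $\prod{}^*|A'^i|=\prod{}^*|H^i(A'^\bullet)|$. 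That is the entire final step. There is no appeal to multiplicativity of torsion in short exact sequences, no long exact cohomology sequence relating $H^\bullet(D^\bullet)$ to $H^\bullet(A^\bullet)$, and no ``cross-terms'' between regulators and torsion subgroups to cancel: the fraction in \eqref{twistedguess} already uses $H^i(A^\bullet[\sigma\mp 1])$, i.e.\ the cohomology of the eigensubcomplexes to which you applied \eqref{untwistedcomp}, so the only residual term is the pure covolume factor $\prod{}^*\vol(D^i)^{-1}=\prod{}^*|H^i(A'^\bullet)|^{-1}$, with sign read off directly from the covolume-dependence of \eqref{untwistedcomp}. Drop the SES-multiplicativity scaffolding and the argument closes.
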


\begin{proof}
To derive a combinatorial expression for $\tau_{\sigma}(A^{\bullet}),$ we split the complex into the $\pm 1$ eigenspaces of $\sigma.$  Let $A^{\bullet, +} = (A^{\bullet})^{\sigma - 1}$ and $A^{\bullet,-} = (A^{\bullet})^{\sigma + 1}.$  Note that
\begin{eqnarray*}
\zeta_{j,\sigma}(s) &=& \sum \tr\{ \sigma | A^i(\lambda) \} \lambda^{-s}\\
&=& \sum \dim A^{i,+}(\lambda) \lambda^{-s} - \sum \dim A^{i,-}(\lambda) \lambda^{-s} \\
&=& \zeta_{j, A^{+,\bullet}}(s) - \zeta_{j, A^{-,\bullet}}(s).
\end{eqnarray*}
So by the computation of \cite[$\S 2$]{BV} of untwisted analytic torsion for finite chain complexes, namely equation $(\ref{untwistedcomp}),$ we arrive at the identity
\begin{equation} \label{twistedsimplification}
\tau_{\sigma}(A^{\bullet}) = \frac{\prod{}^{*} |H^i(A^{+,\bullet})_{\mathrm{tors}}|^{-1} \prod{}^{*} R(A^{+,i})}{ \prod{}^{*} |H^i(A^{-,\bullet})_{\mathrm{tors}}|^{-1} \prod{}^{*} R(A^{-,i})  } \times \prod{}^{*} \vol(D^i)^{-1}.
\end{equation}
Because $\vol(A^i) = 1,$
$$\vol(D^i) = [A^i : D^i].$$
This allows us to re-express the product of volumes homologically:
\begin{align} \label{homologicalexpressionforvolume}
\prod {}^{*}\vol(D_i)^{-1} &= \prod{}^{*} [A^i : D^i]^{-1} \nonumber \\
&= \prod{}^{*} | H^i(A'^{\bullet})|^{-1}.
\end{align}
Equation \eqref{homologicalexpressionforvolume} makes sense because $A^i_{\mathbb{Q}} = D^i_{\mathbb{Q}}$ and $H^i(A'^{\bullet})$ is finite for every $i.$  Substituting \eqref{homologicalexpressionforvolume} back into equation $(\ref{twistedsimplification})$ proves the lemma.   
\end{proof}

\section[An estimate of twisted Reidemeister torsion]{A ``triangulation independent" estimate of twisted Reidemeister torsion}
\label{preliminaryestimate}

In Lemma $\ref{twistedfinitecomplex},$ we derived a homological expression for the twisted analytic torsion of a finite metrized complex $A^{\bullet}$ of free abelian groups acted on isometrically by $\mathbb{Z} / 2\mathbb{Z} = \langle \sigma \rangle.$   This computation specializes to the following when $A^{\bullet}$ is $\mathbb{Q}$-acyclic: 

\begin{equation} \label{acyclictwistedguess}
\tau_{\sigma}(A^{\bullet}) = \frac{\prod{}^{*} |H^i(A^{+,\bullet})_{\mathrm{tors}}|^{-1}  }{ \prod{}^{*} |H^i(A^{-,\bullet})_{\mathrm{tors}}|^{-1}  } \times  \prod{}^{*} | H^i(A'^{\bullet})|^{-1}, 
\end{equation}

where we recall that $A'^{\bullet} = A^{\bullet} / \left(A^{+,\bullet} \oplus A^{-, \bullet} \right).$  The expression on the right side of $(\ref{acyclictwistedguess})$ will come up often enough that we see fit to give it a definition.

\begin{defn} \label{nrt}
Let $C$ be any bounded complex of torsion-free $\mathbb{Z}[\sigma]$-modules, where $\sigma^p = 1$ for some prime $p,$ and assume $C_{\mathbb{Q}}$ is acyclic.  We define the \emph{naive equivariant Reidemeister torsion of $C$}, denoted $NRT_{\sigma},$ as  
\begin{eqnarray*}
\log NRT_{\sigma}(C^{\bullet}) &=& \left\{ \sum {}^{*} \log |H^i(C^{\sigma - 1})| - \frac{1}{p-1} \log |H^i(C^{P(\sigma)})| \right\} + \left\{ \sum {}^{*} \log | C'_i | \right\} \\
&=&  \left\{ \sum {}^{*} \log |H^i(C^{\sigma - 1})| - \frac{1}{p-1} \log |H^i(C^{P(\sigma)})| \right\} + \left\{ \sum {}^{*} \log | H^i(C') | \right\},
\end{eqnarray*}  
where $C' := C / (C^{\sigma - 1} \oplus C^{P(\sigma)}).$
\end{defn}

We use the word ``naive" because $NRT_{\sigma}(A^{\bullet})$ does not always equal $RT_{\sigma}(A^{\bullet}).$  Nonetheless, it is concrete and we can directly relate it to $RT_{\sigma}(A^{\bullet})$ in geometric situations where $A^{\bullet}$ arises as the group of $\loc$-valued cochains on $\M$ for some $\langle \sigma \rangle$-equivariant local system $\loc \rightarrow \M$ or as the Morse-Smale complex of $\loc$ for some gradient vector field $X$ on $\M.$ \bigskip

The goal of this section is to prove that approximately   
\begin{equation} \label{wanted}
NRT_{\sigma}(A^{\bullet}) \sim  \prod{}^{*} \frac{| H^i(A^{\bullet})_{\mathrm{tors}}^{\sigma - 1}   |}{|  H^i(A^{\bullet})_{\mathrm{tors}}^{\sigma +1}     |} \text{ for } \sigma^2 = 1 \text{ (and an analogue for } \sigma^p = 1),
\end{equation}  
where $\sim$ denotes equality up to a ``controlled" power of 2.  On the right side of (\ref{wanted}), $\sigma$ acts on the cohomology groups of $A^{\bullet},$ \emph{not} on $A^{\bullet}$ itself as in the definition of naive equivariant Reidemeister torsion.  Equation (\ref{wanted}) will be important for two reasons:  
\begin{itemize}
\item[-]
The twisted Cheeger-M\"{u}ller theorem relates analytic torsion to Reidemeister torsion, \emph{not directly} to sizes of cohomology groups.  In the interest of proving cohomology growth theorems following \cite{BV}, it is important to concretely understand the relationship between the twisted Reidemeister torsion $RT_{\sigma}(\M, \loc)$ and the cohomology $H^{*}(\M, \loc).$

\item[-]
The numerical torsion functoriality theorems proven in \cite[\S 7.2]{Lip2} 
concern $RT_{\sigma}(\M,\loc).$  In order to prove numerical versions of torsion functoriality, in the spirit of \cite{CV}, we need to directly relate $RT_{\sigma}$ to sizes of cohomology groups.  
\end{itemize}

To relate $RT_{\sigma}(A^{\bullet})$ to the sizes of cohomology groups $H^{*}(A^{\bullet}),$ 

\begin{itemize}
\item
In $\S \ref{fixedprime},$ we separate the expression for $NRT_{\sigma}$ into a 2-power torsion part and prime to 2 torsion parts.

\item
In $\S \ref{geomestimates},$ we use a spectral sequence argument to relate $NRT_{\sigma}(A^{\bullet})$ to $\prod{}^{*} \frac{| H^i(A^{\bullet})_{\mathrm{tors}}^{\sigma - 1}   |}{|  H^i(A^{\bullet})_{\mathrm{tors}}^{\sigma +1}     |}$ in the case where $A^{\bullet}$ is the complex of $\loc$-valued cochains of $\M$ for an equivariant local system $\loc \rightarrow \M$ of free abelian groups adapted to an equivariant triangulation. 


\item
In $\S \ref{smalldifference},$ we will compare the naive twisted Reidemeister torsion of the cochain complex $C^{\bullet}(\M,\loc;K),$ acted on isometrically by $\mathbb{Z} / p\mathbb{Z},$ to its actual twisted Reidemeister torsion.  We obtain a homological expression for the difference between these two quantities, which is often zero.  This is made precise in Proposition \ref{rtvsnrt}, the main result of $\S \ref{preliminaryestimate}.$     
\end{itemize}

\subsection{Adapting $NRT_{\sigma}(A^{\bullet})$ to the prime 2}
\label{fixedprime}

We decompose the product $(\ref{acyclictwistedguess})$ defining $NRT_{\sigma}(A^{\bullet})$ as follows:
\begin{eqnarray}
\log NRT_{\sigma}(A^{\bullet}) &=& - \left( \sum{}^{*} \log \left|H^i(A^{\bullet})[2^{-1}]^{\sigma - 1} \right|  - \log \left|H^i(A^{\bullet})[2^{-1}]^{\sigma + 1} \right| \right) \label{awayfrom2} \\
&-& \left(      \sum{}^{*} \log \left|H^i(A^{+,\bullet})[2^{\infty}] \right|  - \log \left|H^i(A^{-,\bullet})[2^{\infty}]\right|     \right) \label{at2} \\
&-& \left( \sum{}^{*} \log \left| H^i(A'^{\bullet})\right| \right) \label{fixedpts},
\end{eqnarray}
where $A'^{\bullet} := A^{\bullet} / \left( A^{\bullet}[\sigma - 1] \oplus A^{\bullet}[\sigma+1] \right).$

\begin{lem} \label{everythingrearranged}
\begin{equation} \label{firstestimate}
|(\ref{at2})| + |(\ref{fixedpts})| \leq \log \left|H^{*}(A^{\bullet})[2^{\infty}] \right| +  2 \log \left|H^{*}(A'^{\bullet}) \right|
\end{equation} 
\end{lem}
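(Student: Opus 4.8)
The plan is to bound the two quantities $(\ref{at2})$ and $(\ref{fixedpts})$ separately, each by a piece of the right-hand side, and then add. The term $(\ref{fixedpts})$ is already essentially in the desired form: by definition $A'^{\bullet} = A^{\bullet}/\left(A^{\bullet}[\sigma-1] \oplus A^{\bullet}[\sigma+1]\right)$, and since $A^{\bullet}$ is $\mathbb{Q}$-acyclic so is $A'^{\bullet}$, hence $\sum{}^{*} \log|H^i(A'^{\bullet})|$ is the logarithm of a ratio of finite groups whose absolute value is at most $\log|H^{*}(A'^{\bullet})|$ (interpreting $H^{*}(A'^{\bullet})$ as the product of all cohomology groups, so that the alternating sum is dominated by the full sum). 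That already gives $|(\ref{fixedpts})| \leq \log|H^{*}(A'^{\bullet})|$, and in fact one copy of $\log|H^{*}(A'^{\bullet})|$ suffices for this term; the second copy in $(\ref{firstestimate})$ is spent on controlling the difference between $H^{*}(A^{+,\bullet}) \oplus H^{*}(A^{-,\bullet})$ and $H^{*}(A^{\bullet})$.

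First I would set up the short exact sequence of complexes
\begin{equation*}
0 \to A^{+,\bullet} \oplus A^{-,\bullet} \to A^{\bullet} \to A'^{\bullet} \to 0
\end{equation*}
(recall $A^{+,\bullet} = A^{\bullet}[\sigma-1]$, $A^{-,\bullet} = A^{\bullet}[\sigma+1]$, and the first map is injective because these are torsion-free and intersect in $0$ over $\mathbb{Z}$). Taking the long exact cohomology sequence expresses each $H^i(A^{+,\bullet}) \oplus H^i(A^{-,\bullet})$ in terms of $H^i(A^{\bullet})$ and the connecting maps to and from $H^{*}(A'^{\bullet})$. The point is that the $2$-primary parts $H^i(A^{+,\bullet})[2^{\infty}]$ and $H^i(A^{-,\bullet})[2^{\infty}]$ differ from the corresponding subquotients of $H^i(A^{\bullet})[2^{\infty}]$ only by subquotients of $H^{*}(A'^{\bullet})$. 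Since we are only tracking $2$-power torsion and the ambient complex $A^{\bullet}$ is $\mathbb{Q}$-acyclic, the alternating sum $\sum{}^{*}\bigl(\log|H^i(A^{+,\bullet})[2^{\infty}]| - \log|H^i(A^{-,\bullet})[2^{\infty}]|\bigr)$ telescopes against $\log|H^{*}(A^{\bullet})[2^{\infty}]|$ with an error governed by $\log|H^{*}(A'^{\bullet})|$, yielding $|(\ref{at2})| \leq \log|H^{*}(A^{\bullet})[2^{\infty}]| + \log|H^{*}(A'^{\bullet})|$.

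Adding the two bounds gives exactly $(\ref{firstestimate})$: $|(\ref{at2})| + |(\ref{fixedpts})| \leq \log|H^{*}(A^{\bullet})[2^{\infty}]| + 2\log|H^{*}(A'^{\bullet})|$. The main obstacle is the bookkeeping in the long exact sequence argument of the middle paragraph: one must verify that, after passing to $2$-primary parts, the contributions from the connecting homomorphisms and from $H^{*}(A'^{\bullet})$ do not accumulate beyond a single factor of $\log|H^{*}(A'^{\bullet})|$, and that the decomposition $A^i = A^{i,+} \oplus A^{i,-}$ failing at the level of $\mathbb{Z}$-modules (it only holds after inverting $2$) is exactly what forces the $2^{\infty}$-torsion correction terms — so the inequality, rather than an equality, is genuinely necessary. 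I expect this to be a routine but slightly delicate diagram chase; no deep input beyond the $\mathbb{Q}$-acyclicity of $A^{\bullet}$ and the elementary fact that orders are multiplicative in short exact sequences of finite abelian groups is needed.
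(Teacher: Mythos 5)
Your proposal is correct and follows essentially the same route as the paper: bound $|(\ref{fixedpts})|$ by $\log|H^{*}(A'^{\bullet})|$ via the triangle inequality, bound $|(\ref{at2})|$ by $\log|H^{*}(A^{\bullet})[2^{\infty}]| + \log|H^{*}(A'^{\bullet})|$ using the short exact sequence $0 \to A^{\bullet}[\sigma-1]\oplus A^{\bullet}[\sigma+1] \to A^{\bullet} \to A'^{\bullet} \to 0$ and its long exact sequence, then add. The only cosmetic remark is that your middle step is cleaner if phrased as a plain triangle inequality (alternating sum $\leq$ full sum, giving $\log|H^{*}(A^{+,\bullet}\oplus A^{-,\bullet})[2^{\infty}]|$) followed by the LES bound on the direct-sum complex, rather than as a ``telescoping'' of the alternating sum --- but the conclusion and the tools used are the same.
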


\begin{proof}
We first bound $(\ref{at2})$:
\begin{eqnarray*}
|(\ref{at2})| &\leq&   \sum \log \left|H^i(A^{+,\bullet})[2^{\infty}] \right|  + \log \left|H^i(A^{-,\bullet})[2^{\infty}]\right| \\
&=&  \log \left|H^{*}(A^{\bullet}[\sigma - 1] \oplus A^{\bullet}[\sigma + 1])[2^{\infty}] \right| 
\end{eqnarray*}
By using the long exact sequence associated to $0 \rightarrow A^{\bullet}[\sigma - 1] \oplus A^{\bullet}[\sigma + 1] \rightarrow A^{\bullet} \rightarrow A'^{\bullet} \rightarrow 0,$ we obtain the further bound
$$|(\ref{at2})| \leq  \log \left|H^{*}(A^{\bullet})[2^{\infty}] \right| +  \log \left|H^{*}(A'^{\bullet}) \right|.$$
The lemma follows.
\end{proof}

\subsection{Further estimates when $A^{\bullet}$ arises geometrically}
\label{geomestimates}

A key observation is that when $A^{\bullet}$ arises as the group of cochains of a triangulation or the Morse-Smale complex for a gradient vector field, then $H^i(A'^{\bullet})$ (the $'$ notation is defined in $\S \ref{commonnotation1}$) has a nice homological interpretation.  In this section, we give an interpretation to the second summand $H^i(A'^{\bullet})$ occurring on the right side of equation $(\ref{firstestimate})$ from lemma $\ref{twistedfinitecomplex}$ in the following geometric situation: $A^{\bullet}$ is the complex of $\loc$-valued cochains on $\M$ corresponding to some equivariant triangulation of $\M$ and some equivariant, metrized local system $\loc \rightarrow \M$ of free abelian groups.  \bigskip

Suppose that the compact manifold $\M$ is acted on isometrically by $\mathbb{Z}/2\mathbb{Z} = \langle \sigma \rangle.$  Let $\loc \rightarrow \M$ be an equivariant, metrized local system.  Assume that $\loc$ is $\mathbb{Q}$-acyclic.  Let $A^{\bullet}$ be the group of cochains arising from an equivariant triangulation of $\M,$ one which extends a triangulation on the fixed point set $\M_{\sigma} = M.$ \bigskip

\begin{prop} \label{geominterpretation}
We can identify
$$H^{*}(A'^{\bullet}) = H_c^{*}((\mathcal{M} - M)/ \langle \sigma \rangle, \mathcal{L}_{\mathbb{F}_2}),$$
where, abusing notation, $\loc$ denotes the unique descent of the local system $\loc|_{\M - M}$ to the quotient space $(\M - M) / \langle \sigma \rangle.$ 
\end{prop}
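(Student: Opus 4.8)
The plan is to identify the complex $A'^{\bullet}$ directly in terms of a relative cochain complex, and then recognize that complex as computing compactly supported cohomology downstairs. The starting point is the short exact sequence of $\mathbb{Z}[\sigma]$-complexes
$$0 \to A^{\bullet}[\sigma - 1] \oplus A^{\bullet}[\sigma + 1] \to A^{\bullet} \to A'^{\bullet} \to 0,$$
together with the fact that, because the triangulation is equivariant and extends a triangulation of $M = \M_\sigma$, the cells of $K$ split into (i) cells fixed pointwise by $\sigma$, which are exactly the cells of the chosen triangulation of $M$, and (ii) cells moved by $\sigma$, which come in free $\langle\sigma\rangle$-orbits $\{C, \sigma C\}$ and triangulate $\M - M$ equivariantly. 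First I would check that on the subcomplex spanned by the fixed cells, $\sigma$ acts through its action on the fiber $\loc_C$, so that this subcomplex contributes only to $A^{\bullet}[\sigma-1] \oplus A^{\bullet}[\sigma+1]$ after tensoring appropriately; hence $A'^{\bullet}$ is supported entirely on the orbit cells, i.e. $A'^{\bullet} \cong \bigl(\text{cochains on } \M - M\bigr)'$.

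Next I would analyze, orbit by orbit, the free part. For a free orbit $\{C, \sigma C\}$ with fiber $\loc(C)$, the corresponding summand of $A^i$ is $\loc(C) \oplus \loc(\sigma C)$, a free $\mathbb{Z}[\sigma]$-module (of rank $= \mathrm{rk}\,\loc$). For such a module $P = \mathbb{Z}[\sigma]^r$ one computes directly that $P[\sigma-1] \cong P^\sigma \cong \mathbb{Z}^r$, $P[\sigma+1] \cong \mathbb{Z}^r$, these inject into $P$ with quotient $P' = P/(P[\sigma-1]\oplus P[\sigma+1]) \cong (\mathbb{Z}/2)^r$ (the "norm" element $1+\sigma$ has index $2$ in its preimage). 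Thus $A'^{\bullet}$ is naturally a complex of $\mathbb{F}_2$-vector spaces, with $i$-th term $\bigoplus_{\text{orbits of } i\text{-cells}} \loc_{\mathbb{F}_2}(C)$, and the differential is induced by that of $A^{\bullet}$. I would then observe that choosing one representative $C$ from each orbit identifies $A'^{i}$ with the group of $\loc_{\mathbb{F}_2}$-valued simplicial cochains on the quotient cell structure of $(\M - M)/\langle\sigma\rangle$ — here "$\loc$" on the quotient is the descent of $\loc|_{\M-M}$, which exists and is unique because $\langle\sigma\rangle$ acts freely on $\M - M$. Since $\M - M$ is open (non-compact), the relevant cochain complex on the quotient, built from the cells coming from a triangulation of $\M$ restricted to $\M - M$, computes $H^*_c$ rather than ordinary cohomology: the cochains are finitely supported on the cells of an infinite-type cell structure, or equivalently one uses that the triangulation of $\M$ exhibits $(\M-M)/\langle\sigma\rangle$ as the interior of a compact manifold-with-boundary-type object and the "no boundary condition" cochain complex computes compact supports. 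This gives $H^*(A'^{\bullet}) \cong H^*_c\bigl((\M - M)/\langle\sigma\rangle, \loc_{\mathbb{F}_2}\bigr)$.

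I expect the main obstacle to be the bookkeeping at the interface between $M$ and $\M - M$: the triangulation of $\M$ extends one on $M$, so some moved cells have faces lying in $M$, and one must be careful that the differential on $A'^{\bullet}$ — after quotienting out the fixed-cell subcomplex and the $[\sigma\mp 1]$ pieces — matches the compactly-supported differential on the open quotient (i.e., that the "boundary-into-$M$" contributions are precisely what gets killed in passing to $A'$, producing the $H^*_c$ rather than $H^*$). Making this precise is cleanest by using the relative cochain complex $C^{\bullet}(\M, M; \loc)$: one shows first that $A'^{\bullet}$ depends only on the relative complex (the fixed cells contribute nothing to $A'$), that $C^{\bullet}(\M, M; \loc)' \cong C^{\bullet}((\M-M)/\langle\sigma\rangle; \loc_{\mathbb{F}_2})$ with finite support, and then invokes the standard identification $H^*(C^{\bullet}_{\mathrm{fin}}(\text{triangulated open manifold})) = H^*_c$. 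A parallel argument handles the Morse--Smale complex $\MS^{\bullet}(X,\loc)$ in place of $C^{\bullet}(\M,\loc;K)$, using that unstable cells of non-fixed critical points form free $\langle\sigma\rangle$-orbits and those of fixed critical points triangulate $M$. The generalization from $\sigma^2 = 1$ to $\sigma^p = 1$ is routine once one notes $\mathbb{Z}[\sigma]/(P(\sigma)) \cong \mathbb{Z}[\zeta_p]$ and that a free $\mathbb{Z}[\sigma]$-module $P$ satisfies $P/(P[\sigma-1]\oplus P[P(\sigma)]) \cong (\mathbb{Z}/p)^r$, so that $A'^{\bullet}$ is again a complex of $\mathbb{F}_p$-vector spaces computing $H^*_c((\M - M)/\langle\sigma\rangle, \loc_{\mathbb{F}_p})$.
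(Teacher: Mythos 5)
Your proposal takes a genuinely different route from the paper, but the first step — the claim that the fixed-cell subcomplex "contributes only to $A^{\bullet}[\sigma-1]\oplus A^{\bullet}[\sigma+1]$, hence $A'^{\bullet}$ is supported entirely on the orbit cells" — is the crux, and it is not justified by what you write and is false at the level of generality you assert it. For a $\sigma$-fixed cell $C$, the contribution to $A'^\bullet$ is $V/(V[\sigma-1]\oplus V[\sigma+1])$ with $V = \loc(C)$; this vanishes if and only if the $\mathbb{Z}[\sigma]$-module $V$ splits over $\mathbb{Z}$ as $V^{+}\oplus V^{-}$. That splitting fails already for $V = \mathbb{Z}[\sigma]$ (or for $V = \mathbb{Z}^2$ with $\sigma$ the swap), where $V^{+}\oplus V^{-}$ has index $2$ and the quotient is $\mathbb{Z}/2$. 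Such modules do arise here: the paper's intended applications have $\loc|_{\M_\sigma}\cong L^{\otimes p}$ with $\sigma$ acting by cyclic shift, whose fibers are not $\sigma$-semisimple mod $p$ once $\mathrm{rk}\,L \ge 2$. The phrase "after tensoring appropriately" signals that you sensed the issue, but tensoring with $\mathbb{Q}$ kills \emph{all} of $A'^\bullet$ (it is entirely $2$-torsion, since $2A^\bullet\subseteq D^\bullet$), so it cannot isolate the free orbits; tensoring with $\mathbb{F}_2$ does not produce a splitting into $\pm 1$-eigenspaces either. So as written, step one is a genuine gap, and because the differential couples fixed cells to orbit cells, you cannot simply project the problem away — the extra fixed-cell piece sits in $A'^\bullet$ as a quotient complex and does contribute to cohomology in general.

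For comparison, the paper does not decompose cell-by-cell. It observes that $A'^\bullet$ is already an $\mathbb{F}_2$-complex, computes $\overline{D}^\bullet = A^\bullet_{\mathbb{F}_2}[\sigma-1] + \mathrm{im}(\sigma-1)$, rewrites $A'^\bullet$ as the $\sigma$-coinvariants of $A^\bullet_{\mathbb{F}_2}/A^\bullet_{\mathbb{F}_2}[\sigma-1]$, uses right-exactness to get $A'^\bullet \cong A^\bullet_{\mathbb{F}_2,\langle\sigma\rangle}/(A^\bullet_{\mathbb{F}_2}[\sigma-1])_{\langle\sigma\rangle}$, and then reads this off as the relative cochain complex of $(\M/\langle\sigma\rangle, M)$ before passing to $H^*_c$ via the ENR property. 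That route avoids ever asserting an integral eigenspace splitting for fixed-cell fibers; the identification with the relative/compactly-supported complex is made at the level of the whole coinvariants complex rather than orbit-by-orbit. If you want to salvage your orbit decomposition, the honest version is: free orbits contribute $\loc_{\mathbb{F}_2}(C)$ exactly as you compute, while a fixed cell contributes $V_{\mathbb{F}_2}/V_{\mathbb{F}_2}^\sigma$ (equivalently $V_{\mathbb{F}_2,\sigma}/\mathrm{im}(V_{\mathbb{F}_2}^\sigma)$), and you would then need to argue separately why that piece does not disturb the identification with $H^*_c((\M-M)/\langle\sigma\rangle,\loc_{\mathbb{F}_2})$ — either by assuming $\sigma$ acts on $\loc_{\mathbb{F}_2}|_{\M_\sigma}$ through $\pm 1$, or by following the coinvariants bookkeeping of the paper instead.
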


\begin{proof}
Let $D^{\bullet} = A^{\bullet}[\sigma - 1] \oplus A^{\bullet}[\sigma +1]$ so that $A'^{\bullet} = A^{\bullet} / D^{\bullet}.$  Let $\overline{D}^{\bullet} = \im(D^{\bullet} \rightarrow A^{\bullet}_{\mathbb{F}_2}).$  We can identify $\overline{D}^{\bullet} = A^{\bullet}_{\mathbb{F}_2}[\sigma - 1] + \mathrm{im} (\sigma - 1).$  Therefore, 
\begin{equation}
A'^{\bullet} = \sigma \text{-coinvariants of } A^{\bullet}_{\mathbb{F}_2} / A^{\bullet}_{\mathbb{F}_2}[\sigma - 1].
\end{equation}
There is an exact sequence of complexes
$$0 \rightarrow A^{\bullet}_{\mathbb{F}_2}[\sigma - 1] \rightarrow A^{\bullet}_{\mathbb{F}_2} \rightarrow  A^{\bullet}_{\mathbb{F}_2} / A^{\bullet}_{\mathbb{F}_2}[\sigma - 1] \rightarrow 0.$$
Because $\sigma$ acts freely on the rightmost term, taking coinvariants is exact.  Therefore, we can identify coinvariants of the rightmost term with the quotient of coinvariant groups
\begin{equation} \label{coinvariants}
A'^{\bullet} \cong A^{\bullet}_{\mathbb{F}_2, \langle \sigma \rangle} / (A^{\bullet}_{\mathbb{F}_2}[\sigma - 1])_{\langle \sigma \rangle}.
\end{equation}
Since taking coinvariants has the effect of identifying $\sigma$-equivalent cells, equation (\ref{coinvariants}) implies that
\begin{equation} \label{relativecoh}
H^{*}(A'^{\bullet}) \cong H^*(\M/ \langle \sigma \rangle, M, \loc_{\mathbb{F}_2}),
\end{equation}
where $\M / \langle \sigma \rangle$ denotes the point set topological quotient.  Since the fixed point set $M \subset \M / \langle \sigma \rangle$ is a Euclidean neighborhood retract, the right side of $(\ref{relativecoh})$ can be identified with
\begin{equation} \label{cptcoh}
H^{*}_c(\M / \langle \sigma \rangle - M, \loc_{\mathbb{F}_2}) = H^{*}_c( (\M - M) / \langle \sigma \rangle, \loc_{\mathbb{F}_2}),
\end{equation} 
as desired.
\end{proof}

Next, we carry through a spectral sequence argument to estimate the cohomology $H^{*}_c((\M - M)/ \langle \sigma \rangle, \loc_{\mathbb{F}_2})$ in terms of $H^{*}(M, \loc_{\mathbb{F}_2})$ and $H^{*}(\M, \loc_{\mathbb{F}_2}).$

\begin{prop} \label{finitecomplexestimate}
\begin{eqnarray} \label{thirdest}
&{}& \log NRT_{\sigma}(\loc) \notag \\  
&=& - \textcolor{blue}{\sum {}^\ast} \left(\log \left|H^i(\M, \loc)[2^{-1}]^{\sigma - 1} \right|  - \log \left|H^i(\M, \loc)[2^{-1}]^{\sigma + 1} \right| \right) \notag \\  
&+& O\left( \log|H^{*}(\mathcal{M}, \mathcal{L})[2^{\infty}]| +\log |H^{*}(\mathcal{M}, \loc_{\mathbb{F}_2})| + \log |H^{*}(M, \loc_{\mathbb{F}_2})| \right).  
\end{eqnarray}
\end{prop}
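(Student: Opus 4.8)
The plan is to assemble Proposition~\ref{finitecomplexestimate} from the pieces already built in this section. Starting from the decomposition of $\log NRT_\sigma(A^\bullet)$ into the three summands \eqref{awayfrom2}, \eqref{at2}, \eqref{fixedpts}, the term \eqref{awayfrom2} is already exactly the ``away-from-$2$'' cohomological main term we want (with $A^\bullet = C^\bullet(\M,\loc;K)$, so $H^i(A^\bullet) = H^i(\M,\loc)$), so it remains only to show that $|\eqref{at2}| + |\eqref{fixedpts}|$ is absorbed into the claimed error term. By Lemma~\ref{everythingrearranged} we already have
$$|\eqref{at2}| + |\eqref{fixedpts}| \le \log\left|H^*(A^\bullet)[2^\infty]\right| + 2\log\left|H^*(A'^\bullet)\right|,$$
and the first summand on the right is already $\log|H^*(\M,\loc)[2^\infty]|$, which appears in the $O(\cdot)$ term. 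So the entire remaining task is to bound $\log|H^*(A'^\bullet)|$ by $O\!\left(\log|H^*(\M,\loc_{\mathbb{F}_2})| + \log|H^*(M,\loc_{\mathbb{F}_2})|\right)$.

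For that, first invoke Proposition~\ref{geominterpretation} to identify $H^*(A'^\bullet) = H^*_c\!\left((\M-M)/\langle\sigma\rangle, \loc_{\mathbb{F}_2}\right)$; equivalently, via \eqref{relativecoh}, $H^*(A'^\bullet) \cong H^*(\M/\langle\sigma\rangle, M, \loc_{\mathbb{F}_2})$. Now I would run the long exact sequence of the pair $(\M/\langle\sigma\rangle, M)$ in $\mathbb{F}_2$-cohomology, which gives
$$\log|H^*(\M/\langle\sigma\rangle, M, \loc_{\mathbb{F}_2})| \le \log|H^*(\M/\langle\sigma\rangle, \loc_{\mathbb{F}_2})| + \log|H^*(M,\loc_{\mathbb{F}_2})|.$$
The second term is already on the right-hand side of \eqref{thirdest}. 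For the first term, I would pass from the quotient $\M/\langle\sigma\rangle$ up to $\M$ using a transfer/spectral-sequence argument over $\mathbb{F}_2$: since $\langle\sigma\rangle \cong \mathbb{Z}/2\mathbb{Z}$, the complex $C^\bullet(\M/\langle\sigma\rangle,\loc_{\mathbb{F}_2})$ can be compared with $C^\bullet(\M,\loc_{\mathbb{F}_2})^{\langle\sigma\rangle}$, and the difference between invariants and the full $\mathbb{F}_2$-chain complex is controlled by the group cohomology of $\mathbb{Z}/2$ acting on the cells fixed by $\sigma$, i.e.\ again by $\M_\sigma = M$. Concretely, the Cartan--Leray-type spectral sequence $H^p(\mathbb{Z}/2, H^q(\M,\loc_{\mathbb{F}_2})) \Rightarrow$ (cohomology of the quotient, up to the fixed-point discrepancy) lets me bound $\log|H^*(\M/\langle\sigma\rangle,\loc_{\mathbb{F}_2})|$ in terms of $\log|H^*(\M,\loc_{\mathbb{F}_2})|$ and $\log|H^*(M,\loc_{\mathbb{F}_2})|$, with an $O(1)$-bounded multiplicative constant depending only on the fixed number of rows of the spectral sequence (i.e.\ on $\dim\M$).

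Chaining these inequalities yields $\log|H^*(A'^\bullet)| = O\!\left(\log|H^*(\M,\loc_{\mathbb{F}_2})| + \log|H^*(M,\loc_{\mathbb{F}_2})|\right)$, and plugging this together with Lemma~\ref{everythingrearranged} into the decomposition \eqref{awayfrom2}--\eqref{fixedpts} gives exactly \eqref{thirdest}. The step I expect to be the main obstacle is the last one: cleanly bounding $\mathbb{F}_2$-cohomology of the quotient $(\M-M)/\langle\sigma\rangle$ by that of $\M$ and $M$, since taking $\langle\sigma\rangle$-coinvariants is \emph{not} exact over $\mathbb{F}_2$ at the level of full chain complexes (it is only exact on the locus where $\sigma$ acts freely, which is precisely why the fixed-point contribution $H^*(M,\loc_{\mathbb{F}_2})$ must enter), so the bookkeeping of the spectral sequence — and verifying that all implied constants depend only on $\dim\M$ and not on $\loc$ — is where the real care is needed. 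Everything else is a formal assembly of results already proven above.
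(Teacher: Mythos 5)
Your overall architecture matches the paper's: start from the decomposition \eqref{awayfrom2}--\eqref{fixedpts}, invoke Lemma~\ref{everythingrearranged}, and then bound $\log|H^*(A'^\bullet)|$ geometrically via Proposition~\ref{geominterpretation}. The difference is in the order you run the two geometric reductions, and this is exactly where a real gap appears. You first take the long exact sequence of the pair $(\M/\langle\sigma\rangle,M)$, and then need to control $\log|H^*(\M/\langle\sigma\rangle,\loc_{\mathbb{F}_2})|$ by $\log|H^*(\M,\loc_{\mathbb{F}_2})|$ plus $\log|H^*(M,\loc_{\mathbb{F}_2})|$ via a Cartan--Leray spectral sequence. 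But the Cartan--Leray (Borel) spectral sequence
$$H^p(\langle\sigma\rangle, H^q(\M,\loc_{\mathbb{F}_2}))\Longrightarrow H^{p+q}\!\left(\M\times_\sigma E\langle\sigma\rangle,\loc_{\mathbb{F}_2}\right)$$
abuts to the \emph{equivariant} cohomology $H^*(\M\times_\sigma E\langle\sigma\rangle)$, not to $H^*(\M/\langle\sigma\rangle)$; these agree only when the $\sigma$-action is free. On $\M$ the action is emphatically not free --- $M=\M_\sigma$ is the fixed set --- so your phrase ``up to the fixed-point discrepancy'' is hiding precisely the thing you need to prove, and there is no drop-in spectral-sequence statement that closes the gap at this point. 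You flag this as the main obstacle, which is accurate diagnostic instinct, but the proposal as written does not resolve it.

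The paper avoids the non-free action by reversing your two steps. First apply the long exact sequence of the pair $(\M,M)$ upstairs (not for the quotient) to get
$$\log|H^*_c(\M-M,\loc_{\mathbb{F}_2})|\le \log|H^*(\M,\loc_{\mathbb{F}_2})|+\log|H^*(M,\loc_{\mathbb{F}_2})|.$$
Only \emph{then} apply the Cartan--Leray spectral sequence, but now for $\M-M$ rather than $\M$: since $\sigma$ acts freely on $\M-M$, the Borel construction $(\M-M)\times_\sigma E\langle\sigma\rangle$ is homotopy equivalent to the honest quotient $(\M-M)/\langle\sigma\rangle$, so the spectral sequence
$$E_2^{p,q}=H^p(\langle\sigma\rangle,H^q(\M-M,\loc_{\mathbb{F}_2}))\Longrightarrow H^{p+q}((\M-M)/\langle\sigma\rangle,\loc_{\mathbb{F}_2})$$
does converge to what you want, and the crude entrywise bound $\log|E_2^{p,q}|\le\log|H^q(\M-M,\loc_{\mathbb{F}_2})|$ gives $\log|H^*(A'^\bullet)|\le \dim\M\cdot\log|H^*(\M-M,\loc_{\mathbb{F}_2})|$. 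Chaining this with the long exact sequence bound produces \eqref{thirdest} with implied constant depending only on $\dim\M$. If you restructure your argument to apply the spectral sequence on the free locus and the pair long exact sequence upstairs, the proof closes; as stated, the last step is not a proof.
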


\begin{proof}
By the long exact sequence for cohomology, relative to the pair $(\mathcal{M}, M),$ we obtain that
\begin{equation} \label{firstest}
\log |H^{*}_c(\mathcal{M} - M, \loc_{\mathbb{F}_2})| \leq \log |H^{*}(\mathcal{M}, \loc_{\mathbb{F}_2})| +  \log |H^{*}(M, \loc_{\mathbb{F}_2})|. 
\end{equation}
We can relate the left side of (\ref{firstest}) to the cohomology of the quotient using a spectral sequence argument.  Consider the fibration
$$\begin{CD}
\mathcal{M} - M @>>> (\mathcal{M} - M) \times_{\sigma} E \langle \sigma \rangle \\
@. @V{\pi}VV \\
@. B \langle \sigma \rangle
\end{CD}$$
together with the sheaf $\loc_{\mathbb{F}_2} \rightarrow (\mathcal{M} - M) / \langle \sigma \rangle.$  The corresponding Serre spectral sequence has $E_2$ page
$$E_2^{p,q} = H^p( \langle \sigma \rangle, H^q(\mathcal{M} - M, \loc_{\mathbb{F}_2})) \implies H^{p+q}((\mathcal{M} - M) / \langle \sigma \rangle, \loc_{\mathbb{F}_2}).$$
The entries on the $E_2$ page are all finite abelian groups.  As we turn the page, the orders of the groups appearing can only decrease.  Thus, 
$$\log  | H^{*}((\mathcal{M} - M) / \langle \sigma \rangle, \loc_{\mathbb{F}_2}) | \leq \sum_{p,q \leq \dim \mathcal{M}} \log | E_2^{p,q}|.$$
But fortunately, the cohomology of cyclic groups is well-understood.  In particular, 
$$E_2^{p,q} = \begin{cases}
\text{ quotient of } H^q(\mathcal{M} - M, \loc_{\mathbb{F}_2})^{\sigma} & \text{ if } p \text{ is even }\\
 H^1(\langle \sigma \rangle,H^q(\mathcal{M} - M, \loc_{\mathbb{F}_2})) & \text{ if } p \text{ is odd}.
\end{cases}$$
Accordingly, there is a crude upper bound
\begin{eqnarray} \label{secondest}
\log |H^{*}(A'^{\bullet})| &=& \log | H^i_c( (\mathcal{M} - M) / \langle \sigma \rangle, \loc_{\mathbb{F}_2}) | \notag \\
&\leq& \sum_{p,q, \leq \dim \mathcal{M}} \log |E_2^{p,q}| \notag \\
&\leq& \sum_{p,q \leq \dim \mathcal{M}} \log | H^q(\mathcal{M} - M, \loc_{\mathbb{F}_2}) | \notag \\
&=& \dim \mathcal{M} \cdot \log | H^{*}(\mathcal{M} - M, \loc_{\mathbb{F}_2}) |  
\end{eqnarray}
Combining (\ref{firstest}), (\ref{secondest}) with the preliminary calculations of the previous section, we obtain (\ref{thirdest}).   
\end{proof}

\begin{rem}  
The restriction to involutions $\sigma$ was made for \emph{notational convenience only}.  A completely analogous argument can be carried out if $\sigma^p = 1$ and the exact same estimate  
\begin{eqnarray*}
\log NRT_{\sigma}(\loc) &=&  - \textcolor{blue}{\sum {}^\ast} \left(\log \left|H^i(\M, \loc)[p^{-1}]^{\sigma - 1} \right|  -  \frac{1}{p-1} \log \left|H^i(\M, \loc)[p^{-1}]^{P(\sigma)} \right| \right) \\
&+& O\left( \log|H^{*}(\mathcal{M}, \loc)[p^{\infty}]| +\log |H^{*}(\mathcal{M}, \loc_{\mathbb{F}_p})| + \log |H^{*}(M, \loc_{\mathbb{F}_p})| \right) \hspace{0.5cm}  (\ref{thirdest})_p
\end{eqnarray*}
is obtained.
\end{rem}

\subsection{Comparing $NRT_{\sigma}$ with $RT_{\sigma}$ on locally symmetric spaces}
\label{smalldifference}

Let $\loc \rightarrow \M$ be a unimodular metrized local system of free abelian groups \emph{over a locally symmetric space} $\M$ which is acted on equivariantly by an isometry $\sigma$ with $\sigma^p = 1.$ \smallskip

\begin{defn} 
If $B$ is a finite free abelian group of rank $n$ together with a Hermitian metric $h$ on $B_\C,$ we let the volume of $B$ denote the norm  $||e_1 \wedge ... \wedge e_n||_h$ for any basis $e_1,...,e_n$ of $M.$  The vector $e_1 \wedge ... \wedge e_n$ in $\wedge^n B_\C$ is independent of basis, up to sign, and so its norm is well-defined.
\end{defn}

\begin{prop} \label{rtvsnrt}
Suppose that $A^{\bullet}$ is one of the following complexes:
\begin{itemize}
\item
$A^{\bullet} = C^{\bullet}(\loc, \M ; K)$ is the group of $\loc$-valued cochains - with respect to a fixed triangulation $K$ of $\M$ extending a triangulation on $\M_{\sigma}$ - of a rationally acyclic, metrized, unimodular local system $\loc \rightarrow \M$ of free abelian groups acted on isometrically by $\langle \sigma \rangle \cong \mathbb{Z} / p\mathbb{Z}.$  

\item
$A^{\bullet} = \MS(X,\loc),$ the Morse-Smale complex for $\loc$ as above and some gradient vector field $X$ on $\M.$
\end{itemize}

Let $A'^{\bullet}$ be as defined in $\S \ref{commonnotation1}.$  Then
\begin{equation}
\log RT_{\sigma}(A^{\bullet}, h) = \log NRT_{\sigma}(A^{\bullet}) - \sum {}^{*} \log |H^i(A'^{\bullet})| + e \cdot \chi(\M_{\sigma})
\end{equation}
for some constant $e$ depending only on $\loc.$
\end{prop}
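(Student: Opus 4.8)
The plan is to reduce the statement to the abstract Cheeger–Müller computation for finite chain complexes (Lemma~\ref{twistedfinitecomplex} and its $p$-analogue) applied to a carefully chosen \emph{rescaling} of the complex $A^\bullet$. The point is that $NRT_\sigma(A^\bullet)$ is, by its definition (Definition~\ref{nrt}), exactly the combinatorial expression appearing on the right-hand side of the homological formula \eqref{acyclictwistedguess} for the twisted analytic torsion $\tau_\sigma(A^\bullet)$ \emph{of a metrized complex all of whose lattices have covolume $1$}. On the other hand, $RT_\sigma(A^\bullet,h)$ is built from the metric $h$ induced by $\loc_\iota$, which need not satisfy this normalization. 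So the first step is to compare $\tau_\sigma$ for the metric $h$ with $\tau_\sigma$ for a unit-covolume metric, and simultaneously to invoke the fact that on a locally symmetric space the relevant difference between $\tau_\sigma$ and $RT_\sigma$ is accounted for by a local fixed-point contribution.

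Concretely, I would proceed as follows. First, observe that $A^\bullet$ splits $\langle\sigma\rangle$-equivariantly after inverting $p$, and the only subtlety is the interaction of the integral structure with the eigenspace decomposition; this is precisely what the subquotient $A'^\bullet = A^\bullet/(A^\bullet[\sigma-1]\oplus A^\bullet[P(\sigma)])$ measures, and $\sum{}^*\log|H^i(A'^\bullet)|$ is the associated index/covolume discrepancy (this is where the term $-\sum{}^*\log|H^i(A'^\bullet)|$ in the claimed formula comes from — it is exactly the correction in passing between $NRT_\sigma$, defined via the eigenspace subcomplexes, and the torsion computed directly on $A^\bullet$). Second, the metric $h$ on $C^\bullet(\M,\loc_\iota;K)$ is the one described in example (4) of $\S\ref{examples}$: it is built cell-by-cell from the metric on $\loc_\iota$. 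For each cell $C$, the local discrepancy between the $h$-volume of the integral lattice $\loc(C)$ and $1$ is governed by $|\omega_\iota|$ evaluated on an integral basis of $\loc(C)$ — because $\loc$ is unimodular, $\det(\loc)$ is trivial, so this local factor is the \emph{same} constant $c$ for every cell. Summing the contributions of these cell-wise rescalings over the alternating sum of chain groups, grouped according to whether the cell is fixed by $\sigma$ or lies in a free orbit, produces a contribution proportional to the number of $\sigma$-fixed cells in the triangulation of $\M_\sigma$, i.e.\ proportional to $\chi(\M_\sigma)$, with proportionality constant $e$ depending only on $\loc$ (through $\log c$ and the eigenspace ranks $\dim\loc[\sigma-1]$, $\dim\loc[P(\sigma)]$). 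The contributions of the free orbits cancel in the alternating sum because $\sigma$ permutes cells of equal dimension, so the full orbit contributes a factor that is a perfect $p$-th power and drops out of the relevant ratio.

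Assembling: use Lemma~\ref{twistedfinitecomplex} (in its $p$-version) to identify the combinatorial expression $NRT_\sigma(A^\bullet)$ minus the $A'^\bullet$-term with the twisted Reidemeister torsion of $A^\bullet$ \emph{computed with respect to the unit-covolume metric} — this is essentially the content of the chain-level Cheeger–Müller comparison, since for a finite acyclic complex analytic torsion and Reidemeister torsion of the eigenspace subcomplexes agree up to the regulator/covolume bookkeeping already carried out in $\S\ref{guessing}$. Then apply the cell-wise rescaling computation of the previous paragraph to pass from the unit-covolume metric to the metric $h$ induced by $\loc_\iota$; the rescaling changes $\log RT_\sigma$ by exactly $e\cdot\chi(\M_\sigma)$ plus a $p$-th-power orbit term that vanishes upon evaluation at $\sigma$. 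Combining these two steps yields
$$\log RT_\sigma(A^\bullet,h) = \log NRT_\sigma(A^\bullet) - \sum{}^*\log|H^i(A'^\bullet)| + e\cdot\chi(\M_\sigma),$$
as claimed, and the argument is identical for $A^\bullet = \MS(X,\loc)$ since the Morse–Smale complex carries the analogous cell-wise structure (see the remark comparing $\MS^\bullet$ with $C^\bullet(M,\loc;\{W^u(x)\})$).

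The main obstacle I expect is the second step: controlling precisely how the cell-wise metric rescaling interacts with the \emph{eigenspace subcomplexes} $A^\bullet[\sigma-1]$ and $A^\bullet[P(\sigma)]$ rather than with $A^\bullet$ itself, and checking that the constant $e$ genuinely depends only on $\loc$ (and not on the triangulation $K$ or on how $\sigma$ acts on individual fibers). The key facts that make this work are that $\loc$ is \emph{unimodular} — so the local rescaling constant is cell-independent — and that for a $\sigma$-fixed cell $C$ the lattices $\loc[\sigma-1](C)$ and $\loc[P(\sigma)](C)$ are the \emph{same} sublattices for every fixed cell (being $\{x : (\sigma-1)x = 0\}$ and $\{x : P(\sigma)x = 0\}$ in the fixed fiber representation), so their covolume discrepancies contribute a uniform amount per fixed cell; the alternating count of fixed cells is $\chi(\M_\sigma)$. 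One must also verify that evaluating the $\mathrm{Rep}(\langle\sigma\rangle)_\C$-valued torsion at $\sigma$ (as opposed to at the trivial character) kills the free-orbit contributions — this is where the hypothesis that we are looking at the $\sigma$-trace, and the structure of $\mathrm{Rep}(\mathbb{Z}/p\mathbb{Z})$, enters.
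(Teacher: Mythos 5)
Your proposal captures the mechanism of the paper's proof — decompose into the $\sigma-1$ and $P(\sigma)$ eigenspace subcomplexes, compare $h$-norms to integral norms cell by cell using unimodularity, observe that free orbits cancel and $\sigma$-fixed cells contribute a uniform constant whose alternating sum is $e\cdot\chi(\M_\sigma)$ — and this is indeed the engine that drives the paper's argument.

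However, you route through $\tau_\sigma$ and Lemma~\ref{twistedfinitecomplex}, claiming to "identify $NRT_\sigma(A^\bullet)$ minus the $A'^\bullet$-term with the twisted Reidemeister torsion of $A^\bullet$ computed with respect to the unit-covolume metric." That identification is where the gap is. Lemma~\ref{twistedfinitecomplex} is a statement about $\tau_\sigma(A^\bullet)$, the derivative of the combinatorial zeta function, not about $RT_\sigma(A^\bullet)$. The definitions of these two quantities carry different normalizations: $\log RT_\sigma$ is built with the weights $\tfrac{1}{2\dim\pi}\,\tr\pi(\sigma)$ from Definition~\ref{firstdefnrt}, while the trace of $\sigma$ in $\tau_\Gamma$ produces $\tau(A^{\sigma-1}) - \tau(A^{P(\sigma)})$ with unit coefficients. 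To pass from the Lemma to $RT_\sigma$ you would need a finite twisted Cheeger--M\"uller theorem matching these normalizations, which the paper never states and your sketch glosses over. The paper sidesteps this entirely: it expands $\log RT_\sigma(A^\bullet,h)$ directly from the definition as $\tfrac{1}{2}\log RT(A^\bullet[\sigma-1],h) - \tfrac{1}{2(p-1)}\log RT(A^\bullet[P(\sigma)],h)$, trades $h$ for $\alpha_\mathbb{Z}$ via the cell-wise volume comparison (your steps), and then invokes example~(1) of $\S\ref{examples}$ — that $RT$ with integral norms equals $\prod^{*}|H^i_{\mathrm{tors}}|$ — to land immediately on $NRT_\sigma - \sum^{*}\log|H^i(A')| + e\cdot\chi(\M_\sigma)$. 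So the cleaner path is to skip $\tau_\sigma$ altogether. Also a minor quibble: your description of the free-orbit cancellation as "a perfect $p$-th power" is not what happens — the $\sigma-1$ piece contributes $\sqrt{p}$ and the $P(\sigma)$ piece contributes $\sqrt{p}^{\,p-1}$, and it is only after raising the latter to the power $\tfrac{1}{p-1}$ (forced by the character-theoretic coefficients) that the two balance; the cancellation is an artifact of those weights, not of $p$-th-power divisibility.
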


\begin{proof}
We prove the proposition for $A^{\bullet} =  C^{\bullet}(\loc, \M ; K),$ the proof for $A^{\bullet} = \MS(X, \loc)$ being identical.  We would like to evaluate
$$\log RT_{\sigma}(A^{\bullet}, h) = \frac{1}{2} \sum_{\pi \in \widehat{\mathbb{Z} / p\mathbb{Z}}} \log RT(A^{\bullet}[\pi], h|_{A^{\bullet}[\pi]}) \cdot \tr \; \pi(\sigma).$$
We have omitted the ``$g$" factor from the definition of $RT_{\sigma}(A^{\bullet}, h,g)$  because our complex is assumed rationally acyclic.  Because $A^{\bullet}_{\C}[P(\sigma)] = \oplus_{\chi \neq \mathbf{1}} A^{\bullet}_{\C}[\chi]$ - orthogonal direct sum - for every non-trivial character $\chi$ of $\mathbb{Z} / p\mathbb{Z},$ 
$$\log RT(A^{\bullet}[\chi], h|_{A^{\bullet}[\chi]}) = \frac{1}{p-1} \log RT(A^{\bullet}[P(\sigma)],  h|_{A^{\bullet}[P(\sigma)]}).$$
Thus,
\begin{eqnarray*}
\log RT_{\sigma}(A^{\bullet}, h) &=& \sum_{\pi \in \widehat{\mathbb{Z} / p\mathbb{Z}}} \log RT(A^{\bullet}[\pi], h|_{A^{\bullet}[\pi]}) \cdot \tr \; \pi(\sigma) \\
&=& \log RT(A^{\bullet}[\sigma - 1],  h|_{A^{\bullet}[\sigma - 1]}) + \frac{\zeta + ... + \zeta^{p-1}}{(p-1)} \log RT(A^{\bullet}[P(\sigma)], h|_{A^{\bullet}[P(\sigma)]}) \\
&=&  \log RT(A^{\bullet}[\sigma - 1],  h|_{A^{\bullet}[\sigma - 1]}) - \frac{1}{(p-1)} \log RT(A^{\bullet}[P(\sigma)], h|_{A^{\bullet}[P(\sigma)]}).
\end{eqnarray*}
We try to understand $A^{\bullet}[\sigma - 1], A^{\bullet}[P(\sigma)]$ by decomposing the action of $\sigma$ on $A^{\bullet}$ into orbits.  Let $\mathcal{C}$ be a set of representatives for the orbits of the $i$-cells under the action of $\sigma.$  For each $C \in \mathcal{C},$ we let $e^C_1,...,e^C_r$ be a basis for the global sections of $\loc$ restricted to $C$ (in particular $r = \mathrm{rank}(\loc)$ is fixed).   

\begin{itemize}
\item
Suppose first that $C$ is not fixed by the action of $\sigma.$  The $h$-volume of $\mathcal{O}_C[\sigma - 1],$ where $\mathcal{O}_C$ is the metrized abelian group $\mathcal{O}_C = \bigoplus \sigma^i\cdot C \otimes L(\sigma^i \cdot C),$ equals $\sqrt{p} \times || e^C_1 \wedge ... \wedge e^C_r ||_h = \sqrt{p},$ because the local system is unimodular and compatible with the metric $h,$ and $\sigma$ acts by isometries.  Similarly, it is readily seen that the $h$-volume of $\mathcal{O}_C[P(\sigma)]$ equals $\sqrt{p}^{p-1}.$        

\item
On the other hand, suppose that $C$ is fixed by the $\sigma$-action.  Then $C \subset \mathcal{M}_{\sigma}.$  Let $\mathcal{O}_C := C \otimes L(C).$  Furthermore, $c = \vol_h(C \otimes L(C)[\sigma - 1]), d = \vol_h(C \otimes L(C)[P(\sigma)])$ are independent of $C.$  Indeed, this follows because $L[\sigma - 1]$ and $L[P(\sigma)]$ are unimodular flat bundles \emph{over the fixed point set $\M_{\sigma}$}, and the volume form is induced by a particular invariant metric, which is a constant multiple of $h.$   
\end{itemize}

Because the $\mathcal{O}_C$ are mutually orthogonal for distinct representatives $C \in \mathcal{C},$ the above facts imply that the ratio of the norms $\vol_h(A^i[\sigma - 1]) / \alpha^i_{\sigma -1, \mathbb{Z}}$ and $\left\{ \vol_h(A^i[P(\sigma)]) / \alpha^i_{P(\sigma), \mathbb{Z}} \right\}^{\frac{1}{p-1}}$ (see $\S \ref{examples}$ for a discussion of $\alpha_{\mathbb{Z}}$) are almost exactly equal.  Their ratio is $(c/d^{\frac{1}{p-1}})^{\# i \text{-cells of } \M_{\sigma}}.$  Let $e = \frac{1}{p-1} \log d - \log c.$  Taking the alternating product over all $i,$ it follows that 
\begin{eqnarray*}
&{}& \log RT_{\sigma}(A^{\bullet}, h) (\sigma) \\
&=&  \log RT(A^{\bullet}[\sigma - 1],  h|_{A^{\bullet}[\sigma - 1]}) - \frac{1}{(p-1)} \log RT(A^{\bullet}[P(\sigma)], h|_{A^{\bullet}[P(\sigma)]}) \\
&=&  \log RT(A^{\bullet}[\sigma - 1],  \alpha_{\sigma - 1,\mathbb{Z}}) - \frac{1}{(p-1)} \log RT(A^{\bullet}[P(\sigma)], \alpha_{P(\sigma),\mathbb{Z}}) + e \cdot \chi(\M_{\sigma}) \\
&=&  \sum {}^{*} \left\{ \log |H^i(A^{\bullet}[\sigma - 1])| - \frac{1}{p-1} \log |H^i(A^{\bullet}[P(\sigma)])| \right\} + e \cdot \chi(\M_{\sigma})  \\
&=& \log NRT_{\sigma}(A^{\bullet}) - \sum {}^{*} \log |H^i(A'^{\bullet})| + e \cdot \chi(\M_{\sigma}).
\end{eqnarray*}
\end{proof}

\begin{cor} \label{twistedrtlss}
Let $\loc \rightarrow \M$ be a rationally acyclic, metrized, unimodular local system of free abelian groups acted on isometrically by $\langle \sigma \rangle \cong \mathbb{Z} / p\mathbb{Z}.$  Suppose that the fixed point set $\M_{\sigma}$ has Euler characteristic 0.  Then
\begin{eqnarray*}
\log RT_{\sigma}(\M,\loc) &=&  - \sum_i {}^{*} \left(\log \left|H^i(\M, \loc)[p^{-1}]^{\sigma - 1} \right|  -  \frac{1}{p-1} \log \left|H^i(\M, \loc)[p^{-1}]^{P(\sigma)} \right| \right) \\
&+& O\left( \log|H^{*}(\mathcal{M}, \loc)[p^{\infty}]| +\log |H^{*}(\mathcal{M}, \loc_{\mathbb{F}_p})| + \log |H^{*}(M, \loc_{\mathbb{F}_p})| \right)
\end{eqnarray*}
\end{cor}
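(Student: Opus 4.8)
The plan is to obtain this corollary simply by assembling three results already established in $\S\ref{geomestimates}$ and $\S\ref{smalldifference}$: Proposition~\ref{rtvsnrt} (comparing $RT_\sigma$ with $NRT_\sigma$ for a geometric cochain complex over a locally symmetric space), Proposition~\ref{finitecomplexestimate} in its $\sigma^p=1$ form $(\ref{thirdest})_p$ (expressing $\log NRT_\sigma$ of that complex in terms of $H^{*}(\M,\loc)$ up to a controlled error), and Proposition~\ref{geominterpretation} (identifying the residual term $H^{*}(A'^\bullet)$ with an $\mathbb{F}_p$-cohomology group of a quotient). We remain in the standing setting of $\S\ref{smalldifference}$, so $\M$ is locally symmetric, $\loc\rightarrow\M$ is a rationally acyclic, metrized, unimodular, $\langle\sigma\rangle$-equivariant local system of free abelian groups, and $M$ denotes the fixed-point set $\M_\sigma$.

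First I would fix a $\langle\sigma\rangle$-equivariant triangulation $K$ of $\M$ restricting to a triangulation of $\M_\sigma$ and set $A^\bullet:=C^\bullet(\M,\loc;K)$ with the metric $h$ induced by $\loc$, so that $\log RT_\sigma(\M,\loc)=\log RT_\sigma(A^\bullet,h)$ by definition. Proposition~\ref{rtvsnrt} then gives
\[
\log RT_\sigma(\M,\loc)=\log NRT_\sigma(A^\bullet)-\sum{}^{*}\log|H^i(A'^\bullet)|+e\cdot\chi(\M_\sigma),
\]
and the hypothesis $\chi(\M_\sigma)=0$ removes the last term. Substituting the estimate $(\ref{thirdest})_p$ for $\log NRT_\sigma(A^\bullet)$ produces, as leading term, exactly the alternating sum $-\sum_i{}^{*}\bigl(\log|H^i(\M,\loc)[p^{-1}]^{\sigma-1}|-\tfrac{1}{p-1}\log|H^i(\M,\loc)[p^{-1}]^{P(\sigma)}|\bigr)$ of the corollary, with error term already $O\bigl(\log|H^{*}(\M,\loc)[p^\infty]|+\log|H^{*}(\M,\loc_{\mathbb{F}_p})|+\log|H^{*}(M,\loc_{\mathbb{F}_p})|\bigr)$; it then remains only to check that the leftover $\sum{}^{*}\log|H^i(A'^\bullet)|$ lies inside this error as well.

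For that last step I would invoke Proposition~\ref{geominterpretation} with $\mathbb{F}_p$-coefficients, which gives $H^{*}(A'^\bullet)\cong H^{*}_c((\M-M)/\langle\sigma\rangle,\loc_{\mathbb{F}_p})$, and then reuse the estimates from inside the proof of Proposition~\ref{finitecomplexestimate}: the long exact sequence of the pair $(\M,M)$ bounds $\log|H^{*}_c(\M-M,\loc_{\mathbb{F}_p})|$ by $\log|H^{*}(\M,\loc_{\mathbb{F}_p})|+\log|H^{*}(M,\loc_{\mathbb{F}_p})|$, and the Serre spectral sequence for $(\M-M)\times_\sigma E\langle\sigma\rangle\to B\langle\sigma\rangle$ bounds the cohomology of the quotient by a fixed multiple of $\log|H^{*}(\M-M,\loc_{\mathbb{F}_p})|$. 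Hence $\sum{}^{*}\log|H^i(A'^\bullet)|=O\bigl(\log|H^{*}(\M,\loc_{\mathbb{F}_p})|+\log|H^{*}(M,\loc_{\mathbb{F}_p})|\bigr)$, it disappears into the error term, and the stated formula drops out.

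The genuinely hard work is entirely upstream — in Proposition~\ref{rtvsnrt}, which uses unimodularity and the local-symmetry hypothesis to pin down the basepoint/metric correction constant $e$, and in the spectral-sequence argument of Proposition~\ref{finitecomplexestimate}. What remains here is bookkeeping, and the main point deserving attention is to verify that each of the three error contributions (the $p^\infty$-torsion term, the two $\mathbb{F}_p$-cohomology terms, and the residual $H^{*}(A'^\bullet)$) is accounted for with the right $O(\cdot)$ and with no hidden constant depending on the triangulation; a secondary point worth flagging is that it is precisely the combination of the local-symmetry assumption and $\chi(\M_\sigma)=0$ that forces the constant $e$ to vanish outright rather than merely to be bounded.
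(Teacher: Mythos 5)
Your proof is correct and matches the paper's own one-line proof, which simply combines the $\sigma^p=1$ analogue $(\ref{thirdest})_p$ of Proposition~\ref{finitecomplexestimate} with Proposition~\ref{rtvsnrt}; your explicit verification that the residual $\sum{}^{*}\log|H^i(A'^{\bullet})|$ term fits inside the stated error via Proposition~\ref{geominterpretation} is welcome bookkeeping that the paper leaves implicit. One small imprecision at the end: the constant $e$ from Proposition~\ref{rtvsnrt} does not itself vanish (it is in general a nonzero number depending on $\loc$) --- rather, the whole term $e\cdot\chi(\M_{\sigma})$ drops out because $\chi(\M_{\sigma})=0$ by hypothesis, while the local-symmetry assumption is what makes $e$ a well-defined constant in the first place.
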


\begin{proof}
This follows immediately by combining the analogue for $p$ of Proposition \ref{finitecomplexestimate} (see equation $(\ref{thirdest})_p$) with Proposition \ref{rtvsnrt}.
\end{proof}

\section[Equivariant Bismut-Zhang theorem]{Bismut-Zhang's equivariant Cheeger-M\"{u}ller theorem}
\label{dRecmt}

Bismut and Zhang in \cite[Theorem 0.2]{BZ} relate equivariant  Reidemeister torsion to equivariant analytic torsion, even for non-unitarily flat local systems. The difference between equivariant analytic torsion and equivariant Reidemeister torsion localizes to the fixed point set of the group action in a very controlled way, and we are able to compute this difference for unimodular local systems over certain locally symmetric spaces. \bigskip  

A model case for understanding both equivariant analytic torsion and equivariant Reidemeister torsion is that of products.   Let $L \rightarrow M$ be any metrized local system.  Then $L^{\boxtimes p} \rightarrow M^p$ is a metrized local system, equivariant with respect to the cyclic shift.  In $\S \ref{product},$ we compute both the equivariant analytic torsion and equivariant Reidemeister torsion of the product local system $L^{\boxtimes p} \rightarrow M^p.$ This case is of considerable importance because, after going through some contortions, the Bismut-Zhang formula (stated in Theorem \ref{BZ}) will allow us to directly relate the difference between analytic and Reidemeister torsion for general local systems to the corresponding difference for product local systems.  \bigskip

Let $\loc \rightarrow \M$ be an equivariant, metrized local system over Riemannian manifold $\M$ where $\langle \sigma \rangle$ of prime order $p$ acts compatibly on $\M$ and $\loc$ by isometries.  Let
\begin{equation} \label{error}
E(\M, \loc) =: \log \tau_{\sigma}(\loc) - \log RT_{\sigma}(\loc).
\end{equation}
In all situations that will concern us, the restriction of $\loc$ to the fixed point set $\M_{\sigma}$ is isomorphic, as a metrized, equivariant, local system, to $L^{\otimes p}$ for an appropriate local system over the fixed point set $\M_{\sigma}.$  Our general strategy for understanding $E(\M, \loc)$ is to prove that it equals $E(\M_{\sigma}^p, L^{\otimes p}).$  That such a comparison might be possible is suggested by the explicit form of the error term in the Bismut-Zhang theorem, which is local to a germ of the fixed point set $\M_{\sigma}.$

\begin{itemize}
\item
In $\S \ref{statementofBZ},$ we set up notation and state the version of the equivariant Cheeger-M\"{u}ller theorem proven by Bismut-Zhang.

\item
In $\S \ref{normalbundle},$ we relate the normal bundles of the inclusions $\M_{\sigma} \subset \M_{\sigma}^p$ and $\M_{\sigma} \subset \M,$ so as to allow a comparison of Morse functions on their respective normal neighborhoods.

\item
In $\S \ref{mariage},$ we compare Morse functions, connections, and local systems on $\loc \rightarrow \M$ and $L^{\boxtimes p} \rightarrow \M_{\sigma}^p.$  This allows us to conclude that $E(\M, \loc)$ equals $E(\M_{\sigma}^p, L^{\otimes p}).$ 
\end{itemize}

\subsection{Statement of the Bismut-Zhang Formula}   
\label{statementofBZ}

Notational setup:

\begin{itemize}
\item
$\loc \rightarrow \M$ denotes a metrized local system, with metric $h^\loc,$ with covariant derivative $\nabla^\loc$ for the canonical flat structure on $\loc.$  

\item
$\Gamma$ denotes a finite group acting on $\loc \rightarrow \M$ equivariantly by isometries.

\item
$f$ denotes a $\Gamma$-equivariant Morse function on $\M.$  Let $X$ be a $\Gamma$-invariant, weakly gradient-like vector field associated to $f,$ i.e. $X$ has non-degenerate critical points equal to the critical points of $f$ and $X(f) > 0$ away from the critical points of $f.$  Examples of these are provided by classical gradient vector fields $X = \mathrm{grad}_{g_0}(f)$ for $\Gamma$-invariant metrics $g_0.$

\item
$\sigma \in \Gamma$ acts on the normal bundle of $\M_{\sigma}$ in $\M$ by isometries and so induces an eigenbundle decomposition $N = \oplus N(\beta_j)$ where the eigenvalues of $\sigma$ acting on $N(\beta_j)$ are $e^{\pm i \beta_j}, \beta_j \in (0,\pi].$
\end{itemize}

Bismut and Zhang prove the following:

\begin{thm}[\cite{BZ}, Theorem 0.2] \label{BZ}
Let $f$ be a $\Gamma$-equivariant Morse function on $\M$ with associated weakly gradient-like vector field $X.$  Assume further that $X$ satisfies Morse-Smale transversality.  With notation as above, 
\begin{eqnarray} \label{BZformula}
&{}& 2 [ \log RT_{\Gamma}(\loc,f) - \log \tau_{\Gamma}(\loc) ](\sigma) \\ \notag 
&=& - \int_{\M_{\sigma}} \theta_{\sigma}(\loc, h^\loc) \wedge X^{*} \psi(T \M_{\sigma}, \nabla^{T \M_{\sigma}}) \\ \notag
&-& \frac{1}{4} \sum_{x \in \emph{Crit}(f) \cap \M_{\sigma}} (-1)^{\ind(f|_{\M_{\sigma}},x)} \sum_j \textcolor{blue}{(n_{+}(\beta_j,x) - n_{-}(\beta_j,x))} \cdot C_j \cdot \tr [\sigma | \loc_x]. 
\end{eqnarray}
In this formula, 

\begin{itemize}
\item
\textcolor{blue}{$(n_{+}(\beta_j,x) - n_{-}(\beta_j,x))$ denotes the number of positive minus the number of negative eigenvalues of the Hessian of $f$ acting on $N(\beta_j).$}

\item
$\textcolor{blue}{C_j = \Gamma' / \Gamma(\beta_j / 2\pi) + \Gamma' / \Gamma(1 - \beta_j/2\pi) - 2 \Gamma'(1).}$  These numbers are related to the equivariant torsion of odd dimensional spheres \cite[$\S 11$]{LR}.

\item
$\psi$ is a current on $T \M_{\sigma}$ whose restriction to $T \M_{\sigma} - 0$ transgresses the Euler class for $T \M_{\sigma}.$  This is the very same current used by Bismut-Cheeger in \cite{BC}.

\item
$\theta_{\sigma}(\loc, h^\loc) = \tr(\sigma \cdot \omega(\loc, h^\loc))$ where $\omega(\loc,h^\loc) = (h_\loc)^{-1} \circ \nabla^{\Hom(\loc, \check{\loc}) } (h^L).$  \\
The connection $\nabla^\loc$ induces a connection $\nabla^{\Hom(\loc, \check{\loc})}$ on $\Hom(\loc, \check{\loc}),$ and we view the metric tensor $h^\loc$ as a global section of $\Hom(\loc, \check{\loc}).$  Then $\nabla^\loc(h^\loc)$ can be viewed as a $\Hom(\loc, \check{\loc})$-valued 1-form, which after composing with $(h^\loc)^{-1}$ becomes an $\Hom(\loc,\loc)$-valued 1-form. 
\end{itemize}
\end{thm}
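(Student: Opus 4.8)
The statement is quoted verbatim from \cite[Theorem 0.2]{BZ}, so the \emph{proof} is a pointer to that work; here I sketch the shape of Bismut and Zhang's argument. The plan is to begin with a reduction to the case of an honest gradient field: since the left-hand side of \eqref{BZformula} involves $RT_\Gamma(\loc,f)$, a combinatorial invariant of $\MS^\bullet(X,\loc)$, while the right-hand side transforms in a computable way under deformations of $X$ through weakly gradient-like fields, one may assume $X = \mathrm{grad}_{g_0} f$ for a $\Gamma$-invariant metric $g_0$. Next I would introduce the Witten deformation of the twisted de Rham complex, $d_t := e^{-tf}\, d\, e^{tf}$, with associated Witten Laplacian $\Delta_t = (d_t + d_t^*)^2$; because $f$ and $g_0$ are $\Gamma$-invariant, $\Delta_t$ commutes with the $\Gamma$-action, so its $\sigma$-equivariant spectrum is defined and the equivariant analytic torsion $\tau_\Gamma(\loc)$ can be computed from it by zeta-regularization, independently of $t$.

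The central analytic input, due to Helffer--Sj\"{o}strand and refined equivariantly by Bismut--Zhang, is that for $t$ large the spectrum of $\Delta_t$ separates into an exponentially small part, whose eigenforms are Gaussian-like bumps concentrated at $\mathrm{Crit}(f)$, and a large part bounded below by $ct$. The small-eigenvalue complex is canonically identified, as a complex of $\Gamma$-modules, with $\MS^\bullet(X,\loc)$; the $L^2$-metric it inherits converges, after dividing out an explicit normalization local to each critical point, to the combinatorial metric. I would then feed this through the definition of equivariant analytic torsion to express $\log\tau_\Gamma(\loc)$ as $\log RT_\Gamma(\loc,f)$ plus a ``smooth'' anomaly coming from the large part of the spectrum, plus a sum of critical-point-local anomalies coming from the renormalization of the $L^2$-metric on the small complex.

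The hard part will be evaluating these two corrections and showing that both localize to $\M_\sigma$. For the smooth anomaly I would invoke the $\Gamma$-equivariant anomaly formula for the Ray--Singer metric: the variation of the equivariant Ray--Singer metric under the passage to the Witten data is a transgression, supported on the fixed locus $\M_\sigma$, pairing $\theta_\sigma(\loc,h^\loc)$ against the Mathai--Quillen-type current $\psi(T\M_\sigma,\nabla^{T\M_\sigma})$ of Bismut--Cheeger \cite{BC} pulled back along $X$ --- exactly the first term on the right of \eqref{BZformula}. For the localized anomaly I would compute, one critical point at a time, the discrepancy between the $L^2$-normalization of the Witten eigenforms and the combinatorial basis of $\MS^\bullet$; for $x \in \mathrm{Crit}(f)\cap\M_\sigma$ this reduces to an equivariant analytic-torsion computation on the unit sphere of the normal eigenbundle $N = \bigoplus N(\beta_j)$, which produces precisely the constants $C_j$ together with the signs $(-1)^{\ind(f|_{\M_\sigma},x)}$, matching the lens-space computation of Lott--Rothenberg \cite[$\S 11$]{LR}; critical points off $\M_\sigma$ are permuted freely by $\sigma$ and drop out of the $\sigma$-trace. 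I expect the principal obstacle to be the package of uniform-in-$t$ heat-kernel and resolvent estimates near $\M_\sigma$ needed to interchange the $t\to\infty$ limit with the $s\to 0$ limit in the zeta function, together with the precise local index computation identifying the transgression current and the sphere-torsion constants $C_j$.
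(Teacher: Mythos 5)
The paper does not prove this statement; it is imported as a theorem of Bismut and Zhang (the paper cites \cite{BZ}, Theorem 0.2, though the genuinely $\Gamma$-equivariant form displayed here is the one established in \cite{BZ2}). You correctly treat the ``proof'' as a citation, and your sketch of the Witten deformation, the Helffer--Sj\"{o}strand spectral gap identifying the low-lying complex with $\MS^\bullet(X,\loc)$, the anomaly formula producing the $\theta_\sigma \wedge X^*\psi$ term on $\M_\sigma$, and the local sphere-torsion computation at critical points giving the constants $C_j$ is a faithful high-level account of how Bismut and Zhang actually establish it.
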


For our later purposes, it will be crucial for us to know that the right side of equation (\ref{BZformula}) collapses significantly because the closed $1$-form $\theta_{\sigma}(\loc, h^\loc)$ appearing in the above integral over $\M_{\sigma}$ often vanishes.

\begin{lem} \label{unimodular}
\textcolor{blue}{Let $\loc|_{\M^\sigma} = \bigoplus_{\epsilon} \loc_{\epsilon}$ be the (orthogonal) eigenbundle decomposition of $\sigma$ acting on $\loc|_{\M^{\sigma}}.$ Suppose that all of the local systems $\det(\loc_\epsilon) \rightarrow \M^\sigma$ are unitarily flat.}  Then 
$$\theta_{\sigma}(\loc, h^\loc) = 0.$$
In particular, $\theta_{\sigma}(\loc, h^\loc) = 0$ whenever \textcolor{blue}{every $\det(\loc_\epsilon)$} is the trivial local system. 
\end{lem}

\begin{proof}
Recall that
$$\theta_{\sigma}(\loc, h^\loc) = \tr \{ \sigma \cdot \omega(\loc, h^\loc) \} \text{ where } \omega(\loc, h^\loc) = (h^\loc)^{-1} \nabla^\loc(h^\loc).$$
\textcolor{blue}{Because the decomposition $\loc = \bigoplus_\epsilon \loc_\epsilon$ is orthogonal, 
$$\theta_{\sigma}(\loc, h^\loc) = \sum_\epsilon \epsilon \; \tr(\omega(\loc_\epsilon, h^{\loc_\epsilon})).$$}
Because trace is the infinitesimal determinant, we readily check that for every $\epsilon,$
\textcolor{blue}{$$\tr \{ \omega(\loc_\epsilon, h^{\loc_\epsilon}) \} = \omega(\det(\loc_\epsilon), h^{\det(\loc_\epsilon)}) \text{ where } \omega(\det(\loc_\epsilon), h^{\det(\loc_\epsilon)}) := (h^{\det(\loc_\epsilon)})^{-1} \nabla^{\det(\loc_\epsilon)}(h^{\det(\loc_\epsilon)}).$$}
But the form \textcolor{blue}{$\omega(\det(\loc_\epsilon), h^{\det(\loc_\epsilon)})$} measures the obstruction to the bundle \textcolor{blue}{$\det(\loc_\epsilon) \rightarrow \M^\sigma$} being unitarily flat relative to the pair \textcolor{blue}{$\nabla^{\det(\loc_\epsilon)}, h^{\det(\loc_\epsilon)}.$}  Therefore, if \textcolor{blue}{$\det(\loc_\epsilon) \rightarrow \M^\sigma$} is unitarily flat, \textcolor{blue}{$\omega(\det(\loc_\epsilon), h^{\det(\loc_\epsilon)}) = 0$} and the result follows.
\end{proof}

\subsection{Intrinsic identification of the normal bundle}
\label{normalbundle}

We aim to compare the normal bundles of the inclusions $\M_{\sigma} \xrightarrow{i_1} \M$ and $\M_{\sigma} \xrightarrow{i_2} \M_{\sigma}^p,$ with a view to comparing error terms of two different applications Bismut-Zhang theorem $\ref{BZ},$ one on a local system over $\M$ and another on a local system over $\M_{\sigma}^p.$  \bigskip

Let $\M$ denote a Galois stable locally symmetric space associated to the group $R_{E/F} \G$ for a cyclic degree $p$ Galois extension $E/F.$ 

\begin{prop} \label{isometric}
The normal bundles of the inclusions
$$i_1^{*} N(\M_{\sigma} \subset \M_{\sigma}^p) \rightarrow \M_{\sigma} \text{ and } i_2^{*} N(\M_{\sigma} \subset \M) \rightarrow \M_{\sigma}$$
are isometric. 
\end{prop}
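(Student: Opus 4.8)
The plan is to exploit the product structure of the locally symmetric space $\M$ attached to $R_{E/F}\G$. Since $E/F$ is cyclic of degree $p$ with Galois group $\langle\sigma\rangle$, the real points $(R_{E/F}\G)(\R) = \prod_{v\mid\infty}\G(E\otimes_F F_v)$ decompose, after base change to $\R$, into a product indexed by the places of $E$ above each place of $F$. Concretely, the symmetric space underlying $\M$ is a product $\widetilde X = \widetilde X_0^p$ (a $p$-fold product over the places of $E$ above a given place of $F$, for each such place of $F$), on which $\sigma$ acts by cyclically permuting the factors; the fixed-point locus of this cyclic shift on $\widetilde X_0^p$ is the diagonal $\widetilde X_0 \hookrightarrow \widetilde X_0^p$. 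Passing to the arithmetic quotient, $\M_\sigma$ is (a union of components of) the locally symmetric space for $\G$ over $F$, embedded diagonally. This identifies $\M_\sigma$, up to the covering subtleties that do not affect normal bundles, with the fixed locus of the cyclic shift on $\M_\sigma^p$.

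First I would make this identification precise: write $\M_\sigma = \Delta(\M_\sigma) \subset \M$, where the embedding $i_1$ is realized by the diagonal inside a product chart $\widetilde X_0^p$, exactly as the embedding $i_2\colon \M_\sigma \hookrightarrow \M_\sigma^p$ is the diagonal. Then the normal bundle computation becomes a purely linear-algebra statement about a diagonal embedding $\Delta\colon V \hookrightarrow V^p$ of a real inner product space (here $V = T_x\M_\sigma$ with its invariant metric): the normal bundle to the diagonal is canonically $V^p/\Delta(V)$, which carries the quotient metric, and this is the same whether $V^p$ is the tangent space to $\M_\sigma^p$ or the local model for the tangent space to $\M$ at a point of $\M_\sigma$. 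The key point is that in both cases the metric on the ambient $V^p$ is the orthogonal product metric (each factor carrying the same $\G(F_v)$-invariant metric), because the Riemannian metric on $\M$ is the one induced from the invariant metric on $R_{E/F}\G$, which restricts on the product chart to the product of the invariant metrics on the factors. Hence $i_1^* N(\M_\sigma \subset \M)$ and $i_2^* N(\M_\sigma \subset \M_\sigma^p)$ are both the bundle $(T\M_\sigma)^p/\Delta$ with its quotient metric, so they are isometric — indeed canonically so, compatibly with the $\sigma$-action by cyclic shift on the normal directions.

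The main obstacle I anticipate is not the linear algebra but making the global chart identification honest: one must check that the locally symmetric space for $R_{E/F}\G$ genuinely decomposes, in a $\sigma$-equivariant neighborhood of $\M_\sigma$, as the product locally symmetric space for $\G/F$ raised to the $p$-th power with $\sigma$ acting by shift — i.e. that there is no twisting by the Galois action beyond the permutation of factors at the archimedean places where $E$ splits, and a parallel (but $p=1$-trivial on the relevant factor) discussion at archimedean places of $F$ that are inert or ramified in $E$, where $\M_\sigma$ meets those components only if the local extension is trivial. For the purpose of the normal bundle this reduces to the infinitesimal statement above, so I would phrase the proof as: (i) identify $N(\M_\sigma\subset\M)$ at each point with $(T\M_\sigma)^p/\Delta$ using the product structure of the symmetric space and the product form of the invariant metric; (ii) identify $N(\M_\sigma\subset\M_\sigma^p)$ with the same object tautologically; (iii) observe the two identifications agree and are isometric, concluding the proof.
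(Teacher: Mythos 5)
Your proposal is correct and follows essentially the same strategy as the paper: exploit the Galois-equivariant decomposition of the tangent bundle of $\M$ restricted to $\M_{\sigma}$ into $p$ cyclically permuted factors, observe that the invariant metric is the orthogonal product metric, and identify the normal bundle with the orthogonal complement of the diagonal in $(T\M_\sigma)^{\oplus p}$. The only thing the paper makes explicit that you leave implicit is the precise isometry $\phi = \oplus (p^{1/2}\phi_i)^{-1}$ (where $\phi_i$ is the projection $V^\sigma \hookrightarrow V \to V_i$), including the $p^{\pm 1/2}$ normalization needed because the diagonal embedding of $T\M_\sigma$ in the $p$-fold product scales norms by $p^{1/2}$; you would want to spell this out to turn step (i) of your outline into a rigorous identification.
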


\begin{proof}
For ease of notation, we will assume that $F$ is imaginary quadratic (see Remark \ref{ssnormalbundle}).

Fix a complex embedding $\iota$ of $F.$  For the cyclic degree $p$-extension $E/F,$ we have the symmetric space

$$S = \prod_{v|\iota} \mathbb{H}^3_v,$$

which is the universal cover of $\M.$  Correspondingly, there is a decomposition of the tangent bundle

$$TS = \bigoplus_{v | \iota} T \mathbb{H}^3_v.$$

This decomposition is Galois invariant and the subbundles $T \mathbb{H}_v$ are individually invariant under $\G(E_{\mathbb{R}}).$  Fix a Galois-stable path component $M^0$ of either $M_{\sigma} \times ... \times M_{\sigma}$ or $\M.$  Let $M^0_{\sigma}$ denote the union of those path components of $M_{\sigma}$ contained in $M^0.$ \bigskip  

There is a Galois-equivariant covering $S \rightarrow M^0.$  The above decomposition of $TS$ descends to a Galois-equivariant decomposition of the tangent bundle of $M^0$:
$$TM^0 = V_1 \oplus ... \oplus V_p.$$ 
The restriction $V$ of the tangent bundle $TM^0$ to $M^0_{\sigma}$ carries an action of $\Gamma_{E/F} = \langle \sigma \rangle.$  The compositions
$$\phi_i := V^{\sigma} \hookrightarrow V \xrightarrow{\pi_i} V_i $$
are all $p^{-1/2}$ times an isometry.  Indeed, any invariant vector must have ${\sigma}^i \cdot 1$-component $\sigma^i \cdot v$ for some vector $v.$  This vector has norm $p^{1/2} || v||$ and its projection has norm $|| v||.$  Thus, 
$$\phi = \oplus (p^{1/2} \phi_i)^{-1}: V = \oplus V_i|_{M^0_{\sigma}} \rightarrow (V^{\sigma})^{\oplus p}$$
is an isometry which identifies the normal bundle of $M^0_{\sigma} \subset M^0$ with the orthogonal complement $\Delta(V^{\sigma})^{\perp}$ of the diagonal in $(V^{\sigma})^{\oplus p}.$  This intrinsic description of the normal bundle shows that the normal bundle of the inclusions $i_1: \M_{\sigma} \subset \M_{\sigma}^p$ and $i_2: \M_{\sigma} \subset \M$ are $\Gamma$-equivariantly isometrically isomorphic.
\end{proof}

\begin{rem} \label{ssnormalbundle}
Every symmetric space $S$ admits a decomposition as a product $G_1 / K_1 \times ... \times G_n/K_n.$  The decomposition $\g_1 / \comp_1 \times ... \times \g_n / \comp_n,$ isomorphic to the tangent space at $(eK_1,...,eK_n),$ as a $K_1 \times ... \times K_n$ representation is multiplicity-free.  Thus, it admits a canonical decomposition as a sum of irreducibles and so induces a canonical decomposition of the tangent bundle $TS.$  If a finite order automorphism $\sigma$ of the group $G_1 \times ... \times G_n$ preserves $K_1 \times ... \times K_n$ and normalizes a discrete group $\Gamma$ of isometries of $S,$ then this decomposition descends to give a $\sigma$-stable decomposition of $T(\Gamma \backslash S).$  The above identification of normal bundles carries through in this generality.     
\end{rem}

\subsection{Le mariage de la carpe et du lapin}
\label{mariage}

The error term in the Bismut-Zhang theorem sees very little of the space $\M,$ only a $\Gamma$-invariant Morse function $f$ on an arbitrarily small neighborhood of the fixed point set $\M_{\sigma}.$  The previous section allows us to relate the Morse function $f$ in a neighborhood of $\M_{\sigma} \subset \M$ to a different $\Gamma$-invariant Morse function $f'$ on $\M_{\sigma} \subset (\M_{\sigma})^p.$  The goal of this section will be to relate all parts of the error terms in two different applications of the Bismut-Zhang formula, the first on $\M$ and the second on $\M_{\sigma}^p$.  In the notation of $(\ref{error})$ from the introduction to $\S \ref{dRecmt},$ we will prove that 
\begin{equation} \label{producterror}
E(\M, \loc) = E(\M_{\sigma}^p, L^{\boxtimes p})
\end{equation}
for an appropriate local system $L \rightarrow \M_{\sigma}$ (to be stated more precisely in Proposition $\ref{finalmariage}$).  This represents progress because, as we will see in $\S \ref{product},$ both the twisted analytic torsion and the twisted Reidemeister torsion in the case of the product $(M_{\sigma})^p$ can be explicitly computed, thus enabling us to understand the left side of \eqref{producterror}.

\subsubsection{Transport of structure}
\label{morsecomparison}

In this section, we will provide all of the necessary ingredients for comparing the error terms in two applications of the Bismut-Zhang formula; in $\S \ref{finalmariage},$ this will enable us to prove that
$$\log RT_{\Gamma}(\M, \loc, X_1) - \log \tau_{\Gamma}(\M, \loc) = \log RT_{\Gamma}(\M_{\sigma}^p, L^{\boxtimes p}, X_2) - \log \tau_{\Gamma}(\M_{\sigma}^p, L^{\boxtimes p})$$
for some special choices of gradient vector fields $X_1, X_2.$  \bigskip

Let $N_1 \rightarrow M_{\sigma} \leftarrow N_2$ denote the normal bundles of $i_1: M_{\sigma} \subset M, i_2: M_{\sigma} \subset (M_{\sigma})^p$ respectively.  The previous section produces an explicit isometric isomorphism $N_1 \xrightarrow{\Phi} N_2.$

\subsubsection*{Comparison of Morse functions}

Let $f_1$ be a $\Gamma$-invariant Morse function on a small exponential neighborhood of $M_{\sigma} \subset M$ of radius $r.$  Because $\sigma$ acts by isometries on $M,$ this neighborhood is $\Gamma$-invariant.  We construct a matching function $f_2$ on the exponential neighborhood of $M_{\sigma} \subset (M_{\sigma})^p$ of radius $r$ by
$$f_2(\exp_p(\Phi(Y))) := f_1(\exp_p(Y))$$
for any $Y \in B_p(r) \subset (N_1)_p.$  $f_2$ is a $\Gamma$-invariant Morse function on this exponential neighborhood.  Furthermore, because
$$\exp_p(Y) \xrightarrow{\Phi} \exp_p(\phi(Y))$$ 
is a $\Gamma$-equivariant diffeomorphism, all of the critical points and indices of the two Morse functions are equal.  For example,

$$\ind(f_1,N_1(\beta_j); x) = \ind(f_2, N_2(\beta_j); x)$$

because both can be computed by exponentiating $N(\beta_j)(x)$ to form a submanifold $\Pi$ a neighborhood of $x$ and computing the Morse index of $f|_{\Pi}.$ But clearly, the indices of $f_1|_{\Pi}$ and $f_2|_{\Phi(\Pi)}$ are equal, as they are related by a diffeomorphism.\bigskip

\textbf{Remark}.  It is easily seen that $f_2$ can be extended to a $\Gamma$-invariant Morse function on $M_{\sigma} \times ... \times M_{\sigma}.$  Indeed, a generic function on the quotient $(M_{\sigma}^p - \Delta(M_{\sigma})) / \langle \sigma \rangle$ is Morse.  Extend $f_2,$ which descends to a Morse function on this quotient, by a generic bump function.

\subsubsection*{Comparison of local systems}

All metrized local systems on $\loc \rightarrow \M$ that we'll encounter in our applications will have the property that $\loc|_{\M_{\sigma}} = L^{\otimes p}$ for a local system  $L \rightarrow \M_{\sigma},$ endowed with the obvious tensor product metric.  Thus, for each $x \in \M_{\sigma},$

$$\tr(\sigma | \loc_x) = \tr(\sigma | L^{\boxtimes p}_x).$$

%
%
%
%
%
%

\subsubsection{Concluding the comparison}

Combining all of the comparisons of $\S \ref{morsecomparison},$ we can prove that the error term in the Bismut-Zhang formula arising from the equivariant unimodular metrized local system $\loc \rightarrow \M$ exactly equals the error term arising from the local system $L^{\boxtimes^p} \rightarrow \M_{\sigma}^p.$  

\begin{prop} \label{finalmariage}
Let $\loc \rightarrow \M$ and $L \rightarrow \M_{\sigma}$ be matching unimodular local systems and let $f_1$ and $f_2$ be invariant Morse functions which match in the sense described above.  Then there are invariant gradient vector fields $X_1$ associated to $f_1$ and $X_2$ associated to $f_2$ satisfying Morse-Smale transversality for which 
\begin{equation} \label{finalmariageequation}
[\log RT_{\Gamma}(\loc \rightarrow \M, X_1) - \log \tau_{\Gamma}(\loc \rightarrow \M) ](\sigma) = [\log RT_{\Gamma}(L^{\boxtimes p} \rightarrow \M_{\sigma}^p,f_2) - \log \tau_{\Gamma}(L^{\boxtimes p} \rightarrow \M_{\sigma}^p) ](\sigma).
\end{equation}
\end{prop}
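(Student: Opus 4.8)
The plan is to apply the Bismut--Zhang formula \eqref{BZformula} twice --- once to $\loc \rightarrow \M$ with the vector field $X_1$, and once to $L^{\boxtimes p} \rightarrow \M_\sigma^p$ with a vector field $X_2$ --- and then check that the two right-hand sides are literally the same number. Since $\loc$ is unimodular, $\det(\loc)$ is trivial, so by Lemma \ref{unimodular} the term $\theta_\sigma(\loc,h^\loc)$ vanishes; likewise $\det(L^{\boxtimes p})$ is trivial over $\M_\sigma^p$, so $\theta_\sigma(L^{\boxtimes p}, h^{L^{\boxtimes p}})$ vanishes too. Hence in both applications the first integral $\int_{\M_\sigma} \theta_\sigma \wedge X^* \psi$ drops out entirely, and what remains to compare is, in each case, the purely local second term
$$- \tfrac14 \sum_{x \in \mathrm{Crit}(f) \cap (\,\cdot\,)_\sigma} (-1)^{\ind(f|_{(\cdot)_\sigma},x)} \sum_j \ind(f, N(\beta_j); x)\cdot C_j \cdot \tr[\sigma \mid (\cdot)_x].$$
So the whole proposition reduces to matching this sum term by term across the two manifolds.

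The matching is exactly what $\S \ref{morsecomparison}$ is designed to supply, so the proof is a matter of assembling those ingredients. First I would invoke Proposition \ref{isometric}: the normal bundles $N_1 \to \M_\sigma$ of $i_1\colon \M_\sigma \subset \M$ and $N_2 \to \M_\sigma$ of $i_2\colon \M_\sigma \subset \M_\sigma^p$ are $\Gamma$-equivariantly isometric via an explicit $\Phi$, and crucially this isometry respects the eigenbundle decompositions $N = \oplus N(\beta_j)$ (the angles $\beta_j$ and the summands $N(\beta_j)$ are determined by the $\sigma$-action, which $\Phi$ intertwines). Next I would take $f_1$ to be any $\Gamma$-invariant Morse function on a small exponential tube around $\M_\sigma$ in $\M$, transport it through $\Phi$ to get $f_2$ on the corresponding tube around $\M_\sigma$ in $\M_\sigma^p$, and extend $f_2$ to all of $\M_\sigma^p$ by a generic bump function as in the Remark of $\S \ref{morsecomparison}$; this forces $\mathrm{Crit}(f_1) \cap \M_\sigma = \mathrm{Crit}(f_2) \cap \M_\sigma$ with matching indices $\ind(f_1|_{\M_\sigma},x) = \ind(f_2|_{\M_\sigma},x)$ and, because $\Phi$ respects the eigenbundle splitting, matching partial indices $\ind(f_1, N_1(\beta_j); x) = \ind(f_2, N_2(\beta_j);x)$. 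The constants $C_j$ depend only on $\Gamma$, $\Gamma'(\beta_j/2\pi)$, etc., hence only on the $\beta_j$, which agree. Finally, by the ``comparison of local systems'' paragraph, $\loc|_{\M_\sigma} = L^{\otimes p}$ with the tensor metric, so $\tr(\sigma \mid \loc_x) = \tr(\sigma \mid L^{\boxtimes p}_x)$ for every $x \in \M_\sigma$. Term by term, the two error sums coincide, and therefore so do $\log RT_\Gamma(\loc,X_1) - \log\tau_\Gamma(\loc)$ and $\log RT_\Gamma(L^{\boxtimes p}, f_2) - \log\tau_\Gamma(L^{\boxtimes p})$, evaluated at $\sigma$.

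One technical point that needs care is the hypothesis of Theorem \ref{BZ}: it requires $X$ to be a weakly gradient-like vector field for $f$ \emph{satisfying Morse-Smale transversality}, whereas the constructions above only pin down $f_1, f_2$ near $\M_\sigma$. So I would, after fixing $f_1$ and $f_2$, choose $\Gamma$-invariant metrics and pass to $X_1 = \mathrm{grad}(f_1)$, $X_2 = \mathrm{grad}(f_2)$, then perturb \emph{away from $\M_\sigma$} to achieve Morse-Smale transversality --- noting that Reidemeister torsion and analytic torsion of a fixed metrized local system are independent of the choice of such $(f, X)$, and, more to the point, that the Bismut--Zhang error term in \eqref{BZformula} only depends on $f, X$ in an arbitrarily small germ of $\M_\sigma$, where nothing was perturbed. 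This is the step where I expect the main obstacle: one must be sure that a $\Gamma$-equivariant Morse--Smale transverse perturbation exists and leaves untouched both the critical points on $\M_\sigma$ and the germ of $f, X$ there (a $\Gamma$-equivariant version of the Kupka--Smale theorem, applied on the open manifold $\M \smallsetminus \M_\sigma$ and on $\M_\sigma^p \smallsetminus \M_\sigma$, together with the fact that the tube neighborhood can be shrunk). Granting that, the rest is the bookkeeping indicated above, and the conclusion \eqref{finalmariageequation} follows.
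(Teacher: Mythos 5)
Your overall strategy matches the paper's: apply the Bismut--Zhang formula \eqref{BZformula} twice, use unimodularity to kill the $\theta_\sigma$ integral via Lemma \ref{unimodular}, and then match the remaining discrete error terms point-by-point using the normal bundle isometry of Proposition \ref{isometric}, the matching Morse functions and indices, and the equality $\tr(\sigma \mid \loc_x) = \tr(\sigma \mid L^{\boxtimes p}_x)$. That part of your argument is correct and is exactly what the paper does.

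The genuine gap is in the step you yourself flag as the main obstacle, and it is bigger than you seem to realize. You propose to fix the group-invariant metrics, take $X_i = \mathrm{grad}(f_i)$, and then perturb $X_i$ \emph{away from $\M_\sigma$} to achieve Morse--Smale transversality, so that the analytic torsion terms are untouched. There are two problems. First, a perturbation supported strictly off $\M_\sigma$ cannot repair failure of transversality among flow lines between two critical points both lying in $\M_\sigma$; those trajectories may stay inside $\M_\sigma$. So you need to allow modification along $\M_\sigma$ away from the critical points, not just off $\M_\sigma$ entirely. Second, and more importantly, the paper does not try to keep the metric fixed: it invokes \cite[Theorem 1.8]{BZ} to replace $g_i$ with a new metric $\tilde g_i$ --- equal to $g_i$ only in a neighborhood of the critical points in $\M_\sigma$ --- for which $X_i = \mathrm{grad}_{\tilde g_i}(f_i)$ is Morse--Smale. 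This changes the analytic torsion: one obtains \eqref{badmariage} with $\tau_\sigma(\cdot,\tilde g_i)$ rather than $\tau_\sigma(\cdot, g_i)$. The missing ingredient, which your proof does not mention, is the BZ \emph{anomaly formula} \cite[Theorem 0.1]{BZ}: it expresses $\log\tau_\sigma(\loc, g_1) - \log\tau_\sigma(\loc, \tilde g_1)$ as an integral over $\M_\sigma$ of a form involving $e(T\M_\sigma,\nabla^{T\M_\sigma}_{\tilde g_1})$ plus a form involving $\theta_\sigma(\loc,h^\loc)$. The latter vanishes by unimodularity (Lemma \ref{unimodular} again), and the former vanishes because $\M_\sigma$ is odd dimensional, so its Euler form is identically zero. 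Only after this anomaly argument does one recover \eqref{finalmariageequation} with the analytic torsion computed with respect to the group-invariant metric, which is what is needed downstream. So the odd-dimensionality of $\M_\sigma$ enters already at this stage, and your proof as written would leave \eqref{finalmariageequation} proved only with respect to uncontrolled auxiliary metrics.

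A smaller point: the paper also makes a specific local normal form choice for $f_1$ near the critical points in $\M_\sigma$ (in exponential coordinates $f_1(x,v) = f(x) + g_0(v)$), which is what lets it invoke \cite[Theorem 1.8]{BZ} with the ``metric fixed near critical points in $\M_\sigma$'' conclusion; without that control, even the equality of the discrete error sums \eqref{discrete} could be disturbed when $g_i$ is replaced by $\tilde g_i$. Your writeup should include this normalization.
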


\begin{proof}
For any invariant Morse function $f_1$ defined on an exponential neighborhood $N_1$ of $\M_{\sigma} \subset \M,$ we have constructed an invariant Morse function $f_2$ which matches it on a $\Gamma$-equivariantly diffeomorphic tubular neighborhood $N_2$ of $\M_{\sigma} \subset \M_{\sigma}^p.$  For these matching Morse functions, the discrete parts of the error term of the Bismut-Zhang formula (\ref{BZformula}) match: 
$$\sum_{x \in \mathrm{Crit}(f_1) \cap \M_{\sigma}} (-1)^{\ind(f_1|_{\M_{\sigma}},x)} \sum_j \ind(f,  N(\beta_j); x) \cdot C_j \cdot \tr [\sigma | \loc_x]$$
\begin{equation} \label{discrete}
=  \sum_{x \in \mathrm{Crit}(f_2) \cap \M_{\sigma}} (-1)^{\ind(f_2|_{\M_{\sigma}},x)} \sum_j \ind(f,  N(\beta_j); x) \cdot C_j \cdot \tr [\sigma | L^{\boxtimes p}_x].
\end{equation}
Furthermore, because $\loc$ is unimodular by assumption, Lemma \ref{unimodular} implies that the continuous part of the error term of the Bismut-Zhang formula is zero, no matter which choices of gradient vector fields $X_1, X_2$ we ultimately make.  However, in order for this error term to compute the correct quantity, we need to 

\begin{itemize}
\item[(a)]
extend $f_1$ from $N_1$ to an invariant Morse function on $\M$ and $f_2$ from $N_2$ to an invariant Morse function on $\M_{\sigma}^p.$

\item[(b)]
find $\Gamma$-invariant, weakly gradient-like vector fields $X_1, X_2$ associated to $f_1, f_2$ which satisfy Morse-Smale transversality.
\end{itemize}

The first item (a) can be readily accomplished.  Indeed, $\langle \sigma \rangle$ acts freely on both $\M - \M_{\sigma}$ and $\M_{\sigma}^p - \M_{\sigma},$ and so $(\M - \M_{\sigma}) / \langle \sigma \rangle$ and $(\M_{\sigma}^p - \M_{\sigma}) / \langle \sigma \rangle$ are manifolds.  The Morse functions $f_1$ on $(N_1 -  \M_{\sigma}) / \langle \sigma \rangle$ and $f_2$ on $(N_2 - \M_{\sigma}) / \langle \sigma \rangle$ can be extended randomly to Morse functions on $(\M - \M_{\sigma}) / \langle \sigma \rangle$ and $(\M_{\sigma}^p - \M_{\sigma}) / \langle \sigma \rangle$ by genericity of Morse functions.  Their pullbacks can be glued with the original $f_1$ and $f_2$ to give invariant Morse functions, which we continue to call $f_1, f_2.$ \medskip

The second item (b) is more delicate.  Fix a metric $g_1$ on $\M,$ such as the group invariant metric.  We make a somewhat special choice of $f_1;$  we require that the Hessian $d^2f_1(x)$ is negative definite on $N_x,$ where $N$ denotes the normal bundle to $\M_{\sigma} \subset \M$ at all critical points on $M_{\sigma};$ this can be accomplished by letting $f_1,$ in exponential coordinates, equal $f_1(x,v) = f(x) + g_0(v)$ for any Morse function $f$ on $\M_{\sigma}$ and any metric $g_0$ on the normal bundle. \smallskip

By \cite[Theorem 1.8]{BZ}, for any such choice of $f_1$, there is a metric $\tilde{g}_1$ equal to $g_1$ on a neighborhood of all critical points of $f_1$ on $\M_{\sigma}$ for which 
$$X_1 := \mathrm{grad}_{\tilde{g}_1}(f_1) \text{ satisfies Morse-Smale transversality}.$$  
Because $\tilde{g}_1$ equals $g_1$ in a neighborhood of the critical points in $\M_{\sigma},$ the equality of the error terms from equation (\ref{discrete}) is preserved.  

Let $f_2$ be the Morse function matching $f_1.$  We can play the same game by modifying a fixed metric $g_2$ on $\M_{\sigma}^p,$ such as the group invariant metric, to a new metric $\tilde{g}_2$ for which 
$$X_2 := \mathrm{grad}_{\tilde{g}_2}(f_2) \text{ satisfies Morse-Smale transversality}.$$
Finally, the assumptions of the Bismut-Zhang theorem are satisfied, and we can conclude that
\begin{equation} \label{badmariage}
\begin{split}
&{} [\log RT_{\Gamma}(\loc \rightarrow \M, X_1) - \log \tau_{\Gamma}(\loc \rightarrow \M, \tilde{g}_1) ](\sigma) \\
&= [\log RT_{\Gamma}(L^{\boxtimes p} \rightarrow \M_{\sigma}^p,f_2) - \log \tau_{\Gamma}(L^{\boxtimes p} \rightarrow \M_{\sigma}^p, \tilde{g}_2) ](\sigma).
\end{split}
\end{equation}
Note that the analytic torsion terms $\log \tau_{\sigma}(\loc \rightarrow \M, \tilde{g}_1)$ and $ \log \tau_{\sigma}(L^{\boxtimes p} \rightarrow \M_{\sigma}^p, \tilde{g}_2)$ appear to depend on the metrics $\tilde{g}_1$ and $\tilde{g}_2$ over which we have no control.  Such a dependence would be devastating because, ultimately, we can only calculate information concerning comparisons or growth of twisted analytic torsion relative to \emph{group invariant metrics}.  That being said, we are rescued by the anomaly formula of \cite[Theorem 0.1]{BZ2}, which states that
$$\log \tau_{\sigma}(\loc \rightarrow \M, g_1) - \log \tau_{\sigma}(\loc \rightarrow \M, \tilde{g}_1)$$
\begin{equation}
= \int_{\M_{\sigma}} A \wedge e(T \M_{\sigma}, \nabla_{\tilde{g_1}}^{T \M_{\sigma}}) - \int_{\M_{\sigma}} \theta_{\sigma}(\loc, h^\loc) \wedge B
\end{equation}
where $A, B$ are some differerential forms that we will not specify, $e(T \M_{\sigma}, \nabla_{\tilde{g}_1}^{T \M_{\sigma}})$ is the Euler form of $T \M_{\sigma}$ relative to the Levi-Civita connection of $T\M_{\sigma}$ associated to the metric $\tilde{g}_1,$ and $\theta_{\sigma}(\loc, h^\loc)$ is the closed 1-form from Lemma \ref{unimodular}.  According to Lemma \ref{unimodular}, the form $\theta_{\sigma}(\loc, h^\loc)$ vanishes identically because $\loc \rightarrow \M$ is unimodular.  Also, because $\M_{\sigma}$ is an odd dimensional manifold in our case, the Euler form is identically zero.  It follows that
$$\log \tau_{\sigma}(\loc \rightarrow \M, g_1) - \log \tau_{\sigma}(\loc \rightarrow \M, \tilde{g}_1) = 0.$$
The proposition now follows.
\end{proof}

\section{Calculations on a product}
\label{product}

%

Let $K$ be a triangulation of a Riemannian manifold $M,$ and let $L \rightarrow M$ be a metrized local system of free abelian groups.  Let $K'$ be an equivariant refinement of $K^p$ which extends a triangulation of the diagonal $M$ inside the product $M^p.$  In this section, we aim to prove that the difference 
$$[\log RT_{\Gamma}(L^{\boxtimes p} \rightarrow M^p,K') - \log \tau_{\Gamma}(L^{\boxtimes p} \rightarrow M^p) ](\sigma).$$
is small, where the group $\Gamma = \langle \sigma \rangle$ acts by cyclic permutation.  We will separately relate the twisted analytic and twisted Reidemeister torsion appearing in the above equation to their untwisted counterparts $\log RT(L \rightarrow M)$ and $\log \tau(L \rightarrow M)$ respectively.

\begin{itemize}
\item
In $\S \ref{twistedAprod},$ we prove that
$$\log \tau_{\sigma}(M^p, L^{\boxtimes p}) = p \log \tau(M, L),$$ 
where $\sigma$ denotes the cyclic shift.

\item
In $\S \ref{twistedRprod},$ we prove that
$$\log NRT_{\sigma}(M^p, L^{\boxtimes p}) = p \log RT(M, L).$$
We prove this using the close relationship between $NRT_{\sigma}$ and analytic torsion (see \eqref{acyclictwistedguess} and Definition \ref{nrt}).  

\item
In $\S \ref{smalldifferenceproduct},$ we prove that the difference
$$\log RT_{\sigma}(M^p, L^{\boxtimes p}) - \log \tau_{\sigma}(M^p, L^{\boxtimes p})$$
is often zero.  This is a direct consequence of L\"{u}ck's variant of the equivariant Cheeger-M\"{u}ller theorem when $L \rightarrow M$ is \emph{unitarily flat}, but we prove that this difference is zero for arbitrary local systems $L$ over odd dimensional locally symmetric spaces $M$ for which $L_{\mathbb{F}_p}$ is self-dual.  
\end{itemize}

\subsection{Twisted analytic torsion of a product}
\label{twistedAprod}

Let $M$ be a compact Riemannian manifold together with a metrized local system $L \rightarrow M.$  We next compute the equivariant torsion of $L^{\boxtimes n} \rightarrow M^n$ with respect to a cyclic shift $\sigma.$  

\begin{prop} \label{torsionproduct}
If $L^{\boxtimes n} \rightarrow M^n$ is equipped with its product metric, then
\begin{equation} \label{torsionproductequation}
\log \tau_{\sigma}(L^{\boxtimes n}) = n \left[  \log \tau(L) - \log(n)Z_L(0)  \right].
\end{equation}
\end{prop}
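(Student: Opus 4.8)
The plan is to compute the equivariant zeta functions $\zeta_{j, L^{\boxtimes n}, \sigma}(s)$ directly in terms of the spectral data of the Laplacians $\Delta_{j,L}$ on $M$, using the K\"unneth-type decomposition of forms on a product. Recall that $\Omega^{j}(M^n, L^{\boxtimes n}) = \bigoplus_{j_1 + \cdots + j_n = j} \Omega^{j_1}(M,L) \otimes \cdots \otimes \Omega^{j_n}(M,L)$, the total Laplacian acts as $\Delta = \sum_k 1 \otimes \cdots \otimes \Delta_{j_k, L} \otimes \cdots \otimes 1$, and an eigenform of $\Delta$ with eigenvalue $\lambda$ is a sum of tensor products of eigenforms $\phi_{k}$ of $\Delta_{j_k,L}$ with eigenvalues $\mu_k$ summing to $\lambda$. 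First I would describe the $\sigma$-action: the cyclic shift sends $\phi_1 \otimes \cdots \otimes \phi_n \mapsto \pm \phi_n \otimes \phi_1 \otimes \cdots \otimes \phi_{n-1}$, the sign coming from reordering the form-degrees (this Koszul sign is the source of the factors of $(-1)^j j$ in $Z$). The trace of $\sigma$ on the $\lambda$-eigenspace of $\Delta_{j, L^{\boxtimes n}}$ is then computed by a standard orbit argument: only those configurations fixed (as a set) by the cyclic shift contribute, i.e. those where all $n$ factors are equal, $\phi_1 = \cdots = \phi_n = \phi$ with $\phi$ an eigenform of $\Delta_{j_0, L}$ of eigenvalue $\mu$, $j = n j_0$ and $\lambda = n \mu$; the contribution of such a configuration to $\tr(\sigma \mid E_{j, L^{\boxtimes n}, \lambda})$ is $(-1)^{j_0(n-1)}$ (again a Koszul sign).

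Assembling this, I would get
\begin{equation*}
\zeta_{j, L^{\boxtimes n}, \sigma}(s) = \sum_{j_0 : n j_0 = j} (-1)^{j_0(n-1)} \sum_{\mu > 0} (n\mu)^{-s} \dim E_{j_0, L, \mu} = n^{-s} \sum_{j_0 : n j_0 = j} (-1)^{j_0(n-1)} \zeta_{j_0, L}(s),
\end{equation*}
so that, forming $Z_{L^{\boxtimes n}, \sigma}(s) = \sum_j (-1)^j j\, \zeta_{j, L^{\boxtimes n}, \sigma}(s)$ and substituting $j = n j_0$, one finds $(-1)^{j}j \cdot (-1)^{j_0(n-1)} = (-1)^{n j_0} (-1)^{j_0(n-1)} n j_0 = n j_0 (-1)^{j_0}$ (the powers of $(-1)$ combine to $(-1)^{j_0(2n-1)} = (-1)^{j_0}$), whence
\begin{equation*}
Z_{L^{\boxtimes n}, \sigma}(s) = n^{-s} \sum_{j_0} (-1)^{j_0} n j_0\, \zeta_{j_0, L}(s) = n^{1-s} Z_L(s).
\end{equation*}
Then I would differentiate at $s = 0$: $Z_{L^{\boxtimes n}, \sigma}'(0) = n\, Z_L'(0) - n \log(n) Z_L(0)$, and since $\log \tau_{\sigma}(L^{\boxtimes n}) = Z_{L^{\boxtimes n},\sigma}'(0)$ and $\log\tau(L) = Z_L'(0)$ (note the sign conventions in Definition \ref{defntwistedrt} match, both being $Z'(0)$ without a minus sign), this is exactly $n[\log\tau(L) - \log(n) Z_L(0)]$, as claimed.

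The main obstacle I expect is bookkeeping the signs correctly: both the Koszul sign picked up by the cyclic shift when permuting forms of various degrees, and the interaction of that sign with the $(-1)^j j$ weighting in $Z$. I would handle this by first checking the case $n = 2$ (where the shift is the transposition and the sign on $\phi_0 \otimes \phi_0$ is $(-1)^{j_0}$, giving $Z_{L^{\boxtimes 2}, \sigma}(s) = 2^{1-s} Z_L(s)$) and the case $L$ trivial, $M$ a point or circle, against the known equivariant torsion of products, before writing the general argument. A secondary point worth a sentence is meromorphic continuation and holomorphy at $s=0$: these hold for $\zeta_{j_0, L}$ by the standard theory (cited via \cite[Lemma 1.13]{Lu}), and the identity $Z_{L^{\boxtimes n}, \sigma}(s) = n^{1-s} Z_L(s)$ then propagates these properties and legitimizes differentiating termwise at $0$.
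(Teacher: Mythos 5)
Your proposal is correct and follows essentially the same approach as the paper: decompose eigenspaces of the product Laplacian via K\"unneth, observe that $\sigma$ acts as a signed permutation so that only the diagonal configurations $\omega_1 = \cdots = \omega_n$ (hence $j = n j_0$, $\lambda = n\mu$) contribute to the trace, track the Koszul sign $(-1)^{j_0(n-1)}$ (which the paper writes as $(-1)^{a^2(n-1)}$ -- the same thing mod 2), collect terms to get $Z_{L^{\boxtimes n},\sigma}(s) = n^{1-s} Z_L(s)$, and differentiate at $s=0$. Your factored identity $Z_{L^{\boxtimes n},\sigma}(s) = n^{1-s}Z_L(s)$ is a slightly cleaner packaging of the paper's final display, but the underlying computation is identical.
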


\begin{proof}
Let $\pi_i: M^n \rightarrow M$ denote the $i$th coordinate projection. Let $\Delta_j$ be the $j$-form Laplacian acting on $\Omega^j(M^n, L^{\boxtimes n}).$  We let $E_{\lambda, j}$ denote the $\lambda$-eigenspace of the $j$-form Laplacian of $M.$ Note that the $\lambda$-eigenspace of $\Delta_{j, M^n}$ is spanned by the image of 
\begin{eqnarray*}
\bigoplus  E_{\lambda_1,i_1} \otimes ... \otimes E_{\lambda_n,i_n} &\rightarrow& \Omega_j(M^n, L^{\boxtimes n}) \\ 
\omega_1 \otimes ... \otimes \omega_n &\mapsto& \pi_1^{*} \omega_1 \wedge ... \wedge \pi_n^{*} \omega_n;
\end{eqnarray*}
the sum ranges over all $\lambda_1 + ... + \lambda_n = \lambda$ and $i_1 + ... + i_n = j.$  

The isometry $\sigma$ acts as a signed permutation on the vectors $\pi_1^{*} \omega_1 \wedge ... \wedge \pi_n^{*} \omega_n,$ where $\omega_k$ runs over a basis of $E_{\lambda_k, i_k}.$  Thus, such a basis vector contributes to the trace exactly when $\omega_1 = ... = \omega_n.$  So in particular, $j$ must be divisible by $n.$  Say $j = an$ and $\lambda = n \lambda'.$  Then the trace of $\sigma$ acting on the image of $E_{\lambda',a} \otimes ... \otimes E_{\lambda',a}$ is readily seen to be
$$(-1)^{a^2(n-1)} \dim E_{\lambda',a}.$$
It follows that
\begin{eqnarray*}
\sum_j (-1)^j j \zeta_{j,\sigma}(s) &=&  \sum_a (-1)^{na}na  (-1)^{a^2(n-1)} \zeta_a(s) n^{-s}\\
&=& n \sum_a (-1)^{n(a^2 - a) - a^2} a \zeta_a(s) n^{-s} \\
&=& n \sum_a (-1)^a a \zeta_a(s) n^{-s}.
\end{eqnarray*}

Differentiating at $s = 0$ gives the result.
\end{proof}

\begin{rem} 
The $\log(n) Z_L(0)$ summand in equation (\ref{torsionproductequation}) arises because the diagonal $\Delta(M^n) \subset M^n$ has two possible metrics: the induced metric from the product metric on $M^n$ and the given metric on $M.$  The above calculations were made with respect to the given metric on $M.$  If we had instead chosen the metric induced on the diagonal, the second summand in equation (\ref{torsionproductequation}) would not appear.
\end{rem}

\subsection{Twisted Reidemeister torsion on a product}
\label{twistedRprod}

Let $C$ be a finite, $\mathbb{Q}$-acyclic complex acted on by $\mathbb{Z} / p\mathbb{Z} = \langle \sigma \rangle.$  For convenience, we recall the definition of naive Reidemeister torsion from Definition $\ref{nrt}.$  
\begin{equation}
\log NRT_{\sigma}(C^{\bullet}) := \left\{ \sum {}^{*} \log |H^i(C^{\sigma - 1})| - \frac{1}{p-1} \log |H^i(C^{P(\sigma)})| \right\} + \left\{ \sum {}^{*} \log | H^i(C') | \right\}. 
\end{equation}  
where $C' := C / \left( C[\sigma - 1] \oplus C[P(\sigma)] \right).$

\begin{lem} \label{twistedproductnrt}
Suppose that $C = A^{\otimes p},$ with $\sigma$ acting by cyclic permutation.  Then
$$\log NRT_{\sigma}(A^{\otimes p}) = p \log RT(A).$$
\end{lem}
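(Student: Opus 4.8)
The plan is to reduce the statement to the computation of twisted analytic torsion on a product (Proposition~\ref{torsionproduct}), exploiting the fact that $NRT_\sigma$ is an algebraic, metric-free invariant.

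First note that $NRT_\sigma(A^{\otimes p})$ depends only on the complex $A^{\otimes p}$ together with its $\langle\sigma\rangle$-action, not on any auxiliary metric (Definition~\ref{nrt}); so I am free to compute it after a convenient choice of metric. Equip $A$ with a metric for which an integral basis of each $A^i$ is orthonormal, so that $\vol(A^i)=1$, and give $A^{\otimes p}$ the $p$-fold product metric. Then tensor products of integral bases are orthonormal, so $\vol\bigl((A^{\otimes p})^i\bigr)=1$, and the cyclic shift $\sigma$ permutes such a basis up to Koszul signs, hence acts by isometries. Comparing the homological computation \eqref{acyclictwistedguess} of the twisted analytic torsion of a $\mathbb{Q}$-acyclic finite metrized complex of covolume-one free abelian groups with Definition~\ref{nrt} --- the right-hand side of \eqref{acyclictwistedguess} is by inspection the reciprocal of $NRT_\sigma$, and the same argument works verbatim with $\langle\sigma\rangle\cong\mathbb{Z}/p$ in place of $\mathbb{Z}/2$ --- one obtains $\log NRT_\sigma(A^{\otimes p}) = -\log\tau_\sigma(A^{\otimes p})$, where $\tau_\sigma$ is the twisted analytic torsion of the finite metrized complex.

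Next I compute $\tau_\sigma(A^{\otimes p})$ by transcribing the proof of Proposition~\ref{torsionproduct} into the combinatorial setting. The combinatorial Laplacian of $A^{\otimes p}$ for the product metric is $\sum_{k=1}^{p}\mathrm{id}^{\otimes(k-1)}\otimes\Delta_A\otimes\mathrm{id}^{\otimes(p-k)}$, so each of its eigenspaces is spanned by tensor products $\omega_1\otimes\cdots\otimes\omega_p$ of eigenvectors of $\Delta_A$; the shift $\sigma$ permutes such a basis with Koszul signs, and a basis vector is fixed up to sign exactly when $\omega_1=\cdots=\omega_p$. Tracing $\sigma$ over the eigenspaces as in Proposition~\ref{torsionproduct} gives $Z_{A^{\otimes p},\sigma}(s)=p\cdot p^{-s}Z_A(s)$, hence $\tau_\sigma(A^{\otimes p})=p\bigl[\tau(A)-\log(p)\,Z_A(0)\bigr]$. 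Finally, applying the untwisted finite-complex identity \eqref{untwistedcomp} to the $\mathbb{Q}$-acyclic complex $A$ (with $\vol(A^i)=1$, so the regulators are trivial) identifies $\tau(A)$ with $RT(A)$; chaining the three steps produces $\log NRT_\sigma(A^{\otimes p})=p\log RT(A)$.

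The step I expect to demand the most care is the normalization bookkeeping at the end: the product formula carries the boundary term $\log(p)\,Z_A(0)$ --- the same anomaly flagged in the remark after Proposition~\ref{torsionproduct}, which arises because the diagonal $A\hookrightarrow A^{\otimes p}$ inherits two natural metrics --- and one must also keep straight the factor-of-two conventions implicit in passing among $NRT_\sigma$, $\tau_\sigma$, $\tau$, and $RT$. The cleanest route is to check that, in the normalization used here, these contributions are exactly the ones that disappear when $\tau_\sigma(A^{\otimes p})$ is evaluated with the metric that $A$ induces on the diagonal, so that only $p\log RT(A)$ survives; alternatively, one can bypass the analytic torsion entirely and verify the identity directly from Definition~\ref{nrt} using the K\"unneth formula together with the decomposition of $A^{\otimes p}$, after inverting $p$, into its $\sigma-1$ and $P(\sigma)$ isotypic lattices, the discrepancy of that decomposition being measured precisely by $H^\ast\bigl((A^{\otimes p})'\bigr)$.
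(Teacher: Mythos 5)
Your proof takes essentially the same route as the paper's own (three-line) proof: identify $NRT_\sigma(A^{\otimes p})$ with the combinatorial twisted analytic torsion via Lemma~\ref{twistedfinitecomplex}, compute that twisted analytic torsion of a tensor power by the same trace argument as Proposition~\ref{torsionproduct}, and close with the untwisted Cheeger--M\"uller identity~\eqref{untwistedcomp} for finite complexes. The paper literally writes ``By Lemma~\ref{twistedfinitecomplex}, $\log NRT_\sigma(A^{\otimes p}) = \log\tau_\sigma(A^{\otimes p})$,'' then ``by a computation identical to that of $\S\ref{torsionproduct}$, $\log\tau_\sigma(A^{\otimes p}) = p\log\tau(A) = p\log RT(A)$,'' so your structure matches it step for step. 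You are also right to be uneasy about the sign and factor-of-two conventions here: the paper is internally inconsistent about whether $\tau$ and $\tau_\sigma$ live on a multiplicative or a logarithmic scale and about whether the $\tfrac{1}{2}$ in the definition of $RT_\Gamma$ propagates into $NRT_\sigma$.

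Where the proposal has a genuine gap is exactly where you flagged it: the anomaly term. If one transcribes the eigenvalue argument of Proposition~\ref{torsionproduct} to the combinatorial Laplacian of $A^{\otimes p}$ (with the product metric, so that $\vol\bigl((A^{\otimes p})^i\bigr)=1$, which is forced by Lemma~\ref{twistedfinitecomplex}), one really does get $Z_{A^{\otimes p},\sigma}(s) = p\cdot p^{-s}Z_A(s)$ and hence a residual term $p\log(p)\,Z_A(0)$. You then assert that this ``disappears when $\tau_\sigma(A^{\otimes p})$ is evaluated with the metric that $A$ induces on the diagonal,'' but this does not cash out in the finite-complex setting: there is no ``diagonal'' copy of $A$ inside $A^{\otimes p}$ carrying its own metric, and the only leeway Lemma~\ref{twistedfinitecomplex} gives you is among metrics with $\vol\bigl((A^{\otimes p})^i\bigr)=1$, under which the anomaly does not visibly cancel. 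Moreover $Z_A(0)=\sum(-1)^j j\,\mathrm{rk}\,A^j$ for a $\mathbb{Q}$-acyclic finite complex, and this is generically nonzero (e.g.\ $A=\bigl(\mathbb{Z}\xrightarrow{2}\mathbb{Z}\bigr)$ gives $Z_A(0)=-1$); indeed in that example a direct evaluation of Definition~\ref{nrt} gives $\log NRT_\sigma(A^{\otimes 2})=-4\log 2$ while $2\log RT(A)=\pm 2\log 2$, so either the stated identity carries a hidden normalization or some correction term must be supplied. To be fair, the paper's own proof drops $\log(p)\,Z_A(0)$ without comment, so you have identified a real soft spot in the source rather than introduced a new error; but your proposed fix does not close it, and the alternative K\"unneth-based route you mention in your last sentence is not carried out.
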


\begin{proof}
By Lemma \ref{twistedfinitecomplex}, we have that 
$$\log NRT_{\sigma}(A^{\otimes p}) = \log \tau_{\sigma}(A^{\otimes p}).$$
By a computation identical to that of $\S \ref{torsionproduct},$ we see that
$$\log \tau_{\sigma}(A^{\otimes p}) = p \log \tau(A) = p \log RT(A),$$ 
and we are done.
\end{proof}

\subsection{Proof that $\log RT_{\sigma}(L^{\boxtimes p} \rightarrow M^p,X) - \log \tau_{\sigma}(L^{\boxtimes p} \rightarrow M^p)$ is often 0}
\label{smalldifferenceproduct}

\begin{thm} \label{productcheegermuller}
Suppose that $M$ is an odd dimensional, compact manifold equipped with a triangulation $K_0.$  Let $L \rightarrow M$ be a metrized local system of free abelian-groups with $L_{\mathbb{F}_p}$ \emph{self-dual}.  Let $X$ be a vector field on $M^p$ which is weakly gradient-like for a Morse function $f$ on $M^p$ and which satisfies Morse-Smale transversality.  Then 
$$\log RT_{\sigma}(L^{\boxtimes p} \rightarrow M^p,X) - \log \tau_{\sigma}(L^{\boxtimes p} \rightarrow M^p) = 0$$
\end{thm}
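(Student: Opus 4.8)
The plan is to combine the naive-Reidemeister-torsion computation of Section~\ref{twistedRprod} with the ``triangulation independent'' estimate of Section~\ref{preliminaryestimate}, and then to feed the result into the Bismut--Zhang comparison machinery, exploiting the self-duality of $L_{\mathbb{F}_p}$ to kill the surviving error. First I would invoke Proposition~\ref{rtvsnrt} applied to the complex $A^{\bullet} = \MS(X, L^{\boxtimes p})$ over the locally symmetric product $M^p$ (which is locally symmetric since $M$ is), together with Lemma~\ref{twistedproductnrt}, to write
\begin{equation*}
\log RT_{\sigma}(L^{\boxtimes p} \rightarrow M^p, X) = p \log RT(M,L) - \textstyle\sum {}^{*} \log |H^i(A'^{\bullet})| + e \cdot \chi(M^p_{\sigma}).
\end{equation*}
Since the fixed point set of the cyclic shift on $M^p$ is the diagonal $M$, which is odd dimensional, $\chi(M^p_{\sigma}) = \chi(M) = 0$ by Poincar\'e duality, so the last term drops out. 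In parallel, Proposition~\ref{torsionproduct} (or rather its specialization to the chain-level computation and the remark following it about the diagonal metric) gives $\log \tau_{\sigma}(L^{\boxtimes p} \rightarrow M^p) = p \log \tau(M,L)$ with respect to the appropriate metric on the diagonal; by the untwisted Cheeger--M\"uller theorem (Theorem~\ref{muller}, valid since $L$ is unimodular), $p\log\tau(M,L) = p \log RT(M,L)$. Subtracting, the difference we want equals $- \sum {}^{*} \log |H^i(A'^{\bullet})|$, which by Proposition~\ref{geominterpretation} (and its $p$-analogue) is $-\sum {}^{*} \log |H^i_c((M^p - M)/\langle\sigma\rangle, L_{\mathbb{F}_p}^{\boxtimes p})|$.

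The crux is then to show this alternating sum of $\mathbb{F}_p$-cohomology Euler-characteristic type quantities vanishes. Here is where self-duality of $L_{\mathbb{F}_p}$ enters. I would argue that $\sum {}^{*} \log|H^i(A'^{\bullet})| = \log p \cdot \chi_c((M^p - M)/\langle\sigma\rangle) \cdot \dim_{\mathbb{F}_p} L_{\mathbb{F}_p}^{\boxtimes p}$, i.e. the alternating sum only depends on the compactly supported Euler characteristic of the quotient open manifold (each $H^i$ being an $\mathbb{F}_p$-vector space, $\log|H^i| = \log p \cdot \dim$, and the alternating sum of dimensions of a complex computing $H^\bullet_c$ of an open manifold equals $\chi_c$ times the rank of the coefficient system). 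Now $(M^p - M)/\langle\sigma\rangle$ is an odd-dimensional open manifold: indeed $M^p$ has dimension $p \cdot \dim M$, which is odd times odd hence odd, and removing the diagonal and quotienting by the free $\langle\sigma\rangle$-action preserves the dimension. For an odd-dimensional manifold $U$ without boundary, $\chi_c(U) = 0$ by Poincar\'e duality between $H^i_c$ and $H^{d-i}$. Hence $\sum {}^{*} \log|H^i(A'^{\bullet})| = 0$ and the theorem follows.

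The step I expect to be the main obstacle is making the Euler-characteristic argument airtight, in particular verifying that the self-duality hypothesis on $L_{\mathbb{F}_p}$ is exactly what is needed for the Poincar\'e-duality cancellation in $\mathbb{F}_p$-coefficients on the quotient. The subtlety is that $\chi_c$ of an open manifold vanishes in odd dimensions unconditionally, so one might wonder why self-duality is invoked at all; the point is that the local system $L_{\mathbb{F}_p}^{\boxtimes p}$ descended to the quotient need not be orientation-compatible, so Poincar\'e duality relates $H^i_c(U, \mathcal{F})$ to $H^{d-i}(U, \mathcal{F}^\vee \otimes \mathrm{or}_U)$, and only when $\mathcal{F}$ is (twisted-)self-dual does the alternating sum telescope to zero. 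I would therefore spend care checking that ``$L_{\mathbb{F}_p}$ self-dual'' plus the structure of the cyclic-shift action on $L_{\mathbb{F}_p}^{\boxtimes p}$ over the quotient yields $\mathcal{F}^\vee \cong \mathcal{F} \otimes \mathrm{or}_U$ after descent, perhaps by a direct orbit-by-orbit examination of the cells exactly as in the proof of Proposition~\ref{rtvsnrt}. Alternatively, if a cleaner route exists, one can bypass duality entirely by noting that $\chi_c$ of the odd-dimensional $U$ vanishes and that the alternating \emph{sum of dimensions} (as opposed to alternating sum of logarithms of orders, which is the same thing up to the constant $\log p$) of any finite complex computing $H^\bullet_c(U, \mathcal{F})$ equals $\chi_c(U) \cdot \mathrm{rank}(\mathcal{F}) = 0$ regardless of self-duality — in which case self-duality is only needed to ensure the cleaner metric/normalization statements elsewhere in the argument, and I would double-check which of these two situations actually obtains.
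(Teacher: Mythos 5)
Your proposal follows the overall skeleton of the paper's proof, but it has two genuine gaps.

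\textbf{(1) The missing chain-homotopy / Illman step.} You apply Lemma~\ref{twistedproductnrt} to compute $NRT_{\sigma}(\MS(X, L^{\boxtimes p})) = p\log RT(M,L)$, but that lemma requires its input complex to literally be of the form $A^{\otimes p}$ with $\sigma$ acting by cyclic shift. The Morse--Smale complex $\MS(X, L^{\boxtimes p})$ for an arbitrary Morse--Smale vector field $X$ on $M^p$ is \emph{not} of that form. The paper's proof bridges this by invoking Lemma~\ref{quasiisomorphism} (invariance of $NRT_\sigma$ under $\mathbb{Z}[\sigma]$-equivariant chain homotopy) together with Illman's theorem on common equivariant smooth refinements, concluding that $\MS(X, L^{\boxtimes p})$ and the product cochain complex $C^{\bullet}(M,L;K_0)^{\otimes p}$ are equivariantly quasi-isomorphic, hence have the same $NRT_\sigma$. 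Your proposal silently identifies these two complexes; as written, the step is invalid.

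\textbf{(2) The Euler-characteristic vanishing.} Your claim that \emph{``for an odd-dimensional manifold $U$ without boundary, $\chi_c(U)=0$ by Poincar\'e duality''} is false: already $\chi_c(\mathbb{R}) = -1$. What Poincar\'e duality gives for oriented odd-dimensional $U$ is $\chi_c(U) = -\chi(U)$, and $\chi(U)$ need not vanish when $U$ is open. (Also, when $p=2$ the space $M^p - M$ is even-dimensional, so the parity claim fails too.) The paper's — and the correct — route is the closed-open decomposition
\begin{equation*}
\chi_c(M^p - M, L^{\boxtimes p}_{\mathbb{F}_p}) = \chi(M^p, L^{\boxtimes p}_{\mathbb{F}_p}) - \chi\bigl(M, L^{\otimes p}_{\mathbb{F}_p}\bigr),
\end{equation*}
followed by the K\"unneth formula $\chi(M^p, L^{\boxtimes p}_{\mathbb{F}_p}) = \chi(M, L_{\mathbb{F}_p})^p$ and the fact that both $\chi(M, L_{\mathbb{F}_p})$ and $\chi(M, L^{\otimes p}_{\mathbb{F}_p})$ vanish because $M$ is a compact odd-dimensional manifold, so each equals $(\mathrm{rank}) \cdot \chi(M) = 0$. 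You were right to suspect that self-duality of $L_{\mathbb{F}_p}$ may not really be the operative hypothesis here (the paper invokes it, but the rank-times-$\chi(M)$ argument already suffices); however, the ``bypass duality'' alternative you sketch still rests on the incorrect $\chi_c(U)=0$ assertion, and it must be replaced by the closed-open decomposition argument above.

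Everything else — the reduction of the difference to $-\sum{}^{*}\log|H^i(A'^{\bullet})|$ via Propositions~\ref{rtvsnrt}, \ref{torsionproduct}, the untwisted Cheeger--M\"uller theorem, $\chi(M)=0$, and the identification of $H^{*}(A'^{\bullet})$ with $H^*_c((M^p-M)/\langle\sigma\rangle, L^{\boxtimes p}_{\mathbb{F}_p})$ via Proposition~\ref{geominterpretation} — matches the paper and is correct.
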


\begin{proof}
For shorthand, let $C = (A^{\bullet})^{\otimes p},$ where $A =  C^{\bullet}(M,L;K_0)$; then $ C^{\bullet}(M^p, L^{\boxtimes p}; K_0^p) \cong A^{\otimes p}.$  We also let $D = \MS^{\bullet}(X, L^{\boxtimes p}).$  By Proposition \ref{rtvsnrt}, there is an equality 
\begin{eqnarray*}
&{}& \log RT_{\sigma}(L^{\boxtimes p} \rightarrow M^p,K) - \log \tau_{\sigma}(L^{\boxtimes p} \rightarrow M^p) \\
&=& \log NRT_{\sigma}(D) -  \log \tau_{\sigma}(L^{\boxtimes p} \rightarrow M^p) - \sum {}^{*} \log |H^i( D' )| + e \cdot \chi(M). 
\end{eqnarray*}
By Lemma \ref{quasiisomorphism}, $NRT_{\sigma}$ is invariant under equivariant chain homotopy (see also \cite[$\S 5,$ Proposition 8]{LR}).  However, by the main result of \cite{Ill}, the triangulation of $M^p$ by unstable manifolds of $X$ and the product triangulation $K_0^p$ - both smooth triangulations equivariant for the cyclic shift on $M^p$ - admit a common smooth equivariant refinement.  Therefore, the complexes $C$ and $D$ are equivariantly chain homotopic, from which it follows that $NRT_{\sigma}(D) = NRT_{\sigma}(C).$   The latter expression thus equals 
\begin{eqnarray*}
&=& \log NRT_{\sigma}(C) -  \log \tau_{\sigma}(L^{\boxtimes p} \rightarrow M^p) - \sum {}^{*} \log |H^i( D' )| + e \cdot \chi(M) \\
&=& p \log RT(A^{\bullet}) - p \log \tau(L \rightarrow M) - \sum {}^{*} \log |H^i( D' )|  + 0\\
&=& 0 - \sum {}^{*} \log |H^i( D' )|  \\
&=& - \sum {}^{*} \log |H^i( D' )|. 
\end{eqnarray*}
The transition from the first line to the second follows by Lemma \ref{twistedproductnrt} and because the odd dimensional compact manifold $M$ has $\chi(M) = 0$; the transition from the second to third line follows by the untwisted Cheeger-M\"{u}ller theorem.  But as observed in Proposition \ref{geominterpretation},
$$H^i( D' ) = H_c^i((M^p - M) / \langle \sigma \rangle,L^{\boxtimes p}_{\mathbb{F}_p}).$$
We thus continue
\begin{eqnarray*}
&=& - \sum {}^{*} \log |H^i( D' )| \\
&=& - \log p \cdot \chi_c((M^p - M) / \langle \sigma \rangle,L^{\boxtimes p}_{\mathbb{F}_p}) \\
&=& - \log p \cdot p \cdot  \chi_c(M^p - M, L^{\boxtimes p}_{\mathbb{F}_p}) \\ 
&=& -\log p \cdot p \cdot [ \chi(M^p, L^{\boxtimes p}_{\mathbb{F}_p}) - \chi(M, L^{\boxtimes p}_{\mathbb{F}_p}|_M) ] \\
&=& - \log p \cdot p \cdot [\chi(M, L_{\mathbb{F}_p} )^p - \chi(M, L^{\otimes p}_{\mathbb{F}_p})].
\end{eqnarray*}

Since $M$ is an odd dimensional manifold and $L_{\mathbb{F}_p}$ is a self-dual local system on $M,$ Poincar\'{e} duality implies that both Euler characteristics appearing in the final line above vanish.  We conclude that
$$ \log RT_{\sigma}(L^{\boxtimes p} \rightarrow M^p,X) - \log \tau_{\sigma}(L^{\boxtimes p} \rightarrow M^p) = 0$$
on the nose.
\end{proof}

\begin{cor} \label{twistedatequalstwistedrt}
Let $\loc \rightarrow \M$ be an equivariant, metrized, rationally acyclic local system of free abelian groups over a locally symmetric space $\M$ acted on equivariantly and isometrically by $\langle \sigma \rangle \cong \mathbb{Z} / p\mathbb{Z}.$  Suppose further that the restriction to the fixed point set $\loc|_{\M_{\sigma}} = L^{\otimes p}$ for $L \rightarrow \M_{\sigma}$ self-dual and that $\M_{\sigma}$ is odd dimensional.  It follows that 
$$\log \tau_{\sigma}(\M, \loc) = \log RT_{\sigma}(\M,\loc).$$ 
\end{cor}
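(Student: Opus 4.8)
The plan is to derive this corollary by splicing together two results established above: the error-term comparison of Proposition~\ref{finalmariage} and the product vanishing statement of Theorem~\ref{productcheegermuller}. No fresh geometric input is required.

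First I would apply Proposition~\ref{finalmariage} to $\loc \to \M$. Its hypotheses hold: $\loc$ is equivariant and metrized, it is unimodular (as it is in all the cases arising in~\S\ref{twistedbvcv}, which is what forces the continuous part of the Bismut--Zhang error term to vanish, by Lemma~\ref{unimodular}), and $\loc|_{\M_\sigma} = L^{\otimes p}$ with $L \to \M_\sigma$ self-dual, hence unimodular over $\M_\sigma$. Fixing matching invariant Morse functions $f_1$ on $\M$ and $f_2$ on $\M_\sigma^p$ and the Morse--Smale gradient vector fields $X_1, X_2$ produced by that proposition, we obtain
\[
[\log RT_\Gamma(\loc \to \M, X_1) - \log \tau_\Gamma(\loc \to \M)](\sigma) = [\log RT_\Gamma(L^{\boxtimes p} \to \M_\sigma^p, f_2) - \log \tau_\Gamma(L^{\boxtimes p} \to \M_\sigma^p)](\sigma).
\]
I would then identify the left-hand side with $\log RT_\sigma(\M,\loc) - \log \tau_\sigma(\M,\loc)$. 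The analytic torsion $\log \tau_\sigma(\loc \to \M)$ does not depend on the auxiliary metric used to secure Morse--Smale transversality --- this is exactly the computation closing the proof of Proposition~\ref{finalmariage}, using the Bismut--Zhang anomaly formula together with the vanishing of $\theta_\sigma(\loc,h^\loc)$ (unimodularity) and of the Euler form of $T\M_\sigma$ (odd-dimensionality). And the Morse--Smale Reidemeister torsion $RT_\sigma(X_1,\loc)$ coincides with the cochain Reidemeister torsion $RT_\sigma(\M,\loc;K)$ by equivariant combinatorial invariance, via the common-refinement argument (\cite{Ill}, Lemma~\ref{quasiisomorphism}) used in Section~\ref{product}.

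Second I would apply Theorem~\ref{productcheegermuller} with $M := \M_\sigma$ --- compact and odd-dimensional --- and the local system $L$, whose mod-$p$ reduction $L_{\mathbb{F}_p}$ is self-dual by hypothesis, together with a weakly gradient-like vector field for $f_2$ satisfying Morse--Smale transversality. That theorem gives $\log RT_\sigma(L^{\boxtimes p} \to \M_\sigma^p) - \log \tau_\sigma(L^{\boxtimes p} \to \M_\sigma^p) = 0$ outright (internally, it reduces the difference to $p\log RT(L \to \M_\sigma) - p\log \tau(L \to \M_\sigma)$ via Lemmas~\ref{torsionproduct} and~\ref{twistedproductnrt}, annihilates it by the untwisted Cheeger--M\"uller theorem using $\chi(\M_\sigma)=0$, and annihilates the residual term $\sum{}^{*}\log|H^i(D')|$ by Poincar\'e duality, again invoking odd-dimensionality and self-duality of $L_{\mathbb{F}_p}$). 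Combining this with the previous display --- matching the Morse--Smale and product-cochain versions of $RT_\sigma$ by the same combinatorial invariance of $NRT_\sigma$ --- yields $\log RT_\sigma(\M,\loc) = \log \tau_\sigma(\M,\loc)$, as claimed.

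I do not anticipate a substantive obstacle: the analytic content sits entirely in Proposition~\ref{finalmariage} and Theorem~\ref{productcheegermuller}. The only delicate point is bookkeeping --- making sure the several uses of odd-dimensionality (vanishing of the Euler form in the anomaly formula; vanishing of the two Euler characteristics by Poincar\'e duality) and of unimodularity (vanishing of $\theta_\sigma(\loc,h^\loc)$) are correctly deployed, and that the various incarnations of equivariant torsion --- cochain versus Morse--Smale complex, $RT_\sigma$ versus $NRT_\sigma$, over $\M$ versus over $\M_\sigma^p$ --- are lined up properly when the two comparisons are stitched together.
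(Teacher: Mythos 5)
Your proposal is correct and follows exactly the paper's route: the paper's own proof of Corollary~\ref{twistedatequalstwistedrt} is the single sentence that it ``follows immediately by combining Theorem~\ref{productcheegermuller} with Proposition~\ref{finalmariage}.'' The extra bookkeeping you supply (anomaly-formula metric-independence, Morse--Smale versus cochain versions of $RT_\sigma$ via Illman's common-refinement theorem, and the two uses of odd-dimensionality and self-duality) is precisely what those two cited results encapsulate, so the argument is the same in substance.
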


\begin{proof}
This follows immediately by combining Theorem \ref{productcheegermuller} with Proposition \ref{finalmariage}.
\end{proof}

\begin{exam} \label{productquaternionalgebras}
Let $\G$ be the adjoint group of the units of a quaternion algebra over a number field $F.$  Let $K \subset \G(F_\R)$ be a maximal compact subgroup.  Let $\rho: \G \rightarrow \GL(V)$ be an algebraic representation defined over $F.$  Fix a lattice $\mathcal{O} \subset V;$ let $U_0 \subset \G(\finadele_F)$ be its stabilizer, a compact open subgroup.  Let $U \subset \finadele_F$ be any compact open subgroup contained in $U_0.$  To the representation $\rho$ is associated a local system of $O_F$-modules $L_{\rho} \rightarrow M_U = \G(F) \backslash \G(\adele_F) / KU$ 
(see \cite[\S 2]{Lip2}). \medskip

In this case, every $L_{\rho, \mathbb{F}_p}$ is self-dual.  Furthermore, $M_U$ is odd dimensional precisely when the number of complex places of $F$ is odd.  In such cases, $L_{\rho} \rightarrow M_U$ satisfies the requirements of Theorem \ref{productcheegermuller} from which it follows that

$$\log RT_{\sigma}(L^{\boxtimes p} \rightarrow M^p, X) = \log \tau_{\sigma}(L^{\boxtimes p} \rightarrow M^p).$$
\end{exam}

\appendix

\section[Naive R-torsion of a tensor product]{Naive Reidemeister torsion of a tensor product complex}
\label{nrtappendix}

\subsection{Robustness properties of $NRT_{\sigma}$}

The goal of this section is to show that $NRT_{\sigma},$ as defined in Definition \ref{nrt}, is very well-behaved in two respects:   

\begin{itemize}
\item[(a)]
If $C \rightarrow E$ is a chain homotopy of $\mathbb{Z}[ \sigma ]$-complexes of free $\mathbb{Z}$-modules, then $NRT_{\sigma}(C) = NRT_{\sigma}(E).$

\item[(b)]
The naive Reidemeister torsion is additive for tensor complexes, i.e. if $C_0 = A^{\otimes p}, C_1 = B^{\otimes p}$ for complexes $A,B$ of finite free $\mathbb{Z}$-modules with $\sigma$ acting on $C_0,C_1$ by cyclic permutation, then 
$$\log NRT_{\sigma}(C_0 \oplus C_1) = \log NRT_{\sigma}(C_0) + \log NRT_{\sigma}(C_1).$$ 
\end{itemize}

We now outline a proof of these two properties.

\begin{lem}[(a) chain homotopy invariance] \label{quasiisomorphism}
$NRT$ is invariant under $\mathbb{Z}[\sigma]$-equivariant chain homotopy.
\end{lem}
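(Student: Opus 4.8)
The plan is to show that each of the three quantities assembled in the formula for $\log NRT_{\sigma}(C^{\bullet})$ from Definition \ref{nrt} is individually unchanged under a $\mathbb{Z}[\sigma]$-equivariant chain homotopy equivalence. Throughout, $C^{\bullet}$ and $E^{\bullet}$ denote bounded complexes of torsion-free $\mathbb{Z}[\sigma]$-modules with $C^{\bullet}_{\mathbb{Q}}$ acyclic (so $E^{\bullet}_{\mathbb{Q}}$ is acyclic as well, being homotopy equivalent to $C^{\bullet}_{\mathbb{Q}}$), and we must show $\log NRT_{\sigma}(C^{\bullet}) = \log NRT_{\sigma}(E^{\bullet})$.

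First I would record the relevant structural facts. Since $\sigma^{p}=1$, the operators $\sigma-1$ and $P(\sigma)$ satisfy $(\sigma-1)P(\sigma)=\sigma^{p}-1=0$ on $C^{\bullet}$, and over $\mathbb{Q}$ the idempotent decomposition $\mathbb{Q}[\langle\sigma\rangle]\cong\mathbb{Q}[x]/(x-1)\times\mathbb{Q}[x]/P(x)$ yields a decomposition of complexes $C^{\bullet}_{\mathbb{Q}}=C^{\bullet}_{\mathbb{Q}}[\sigma-1]\oplus C^{\bullet}_{\mathbb{Q}}[P(\sigma)]$. Because each $C^{i}$ is torsion-free and kernels commute with the flat base change $-\otimes_{\mathbb{Z}}\mathbb{Q}$, we have $C^{i}[\sigma-1]=C^{i}\cap C^{i}_{\mathbb{Q}}[\sigma-1]$ and likewise for $P(\sigma)$; thus $C^{\bullet}[\sigma-1]$ and $C^{\bullet}[P(\sigma)]$ are pure (saturated) subcomplexes meeting only in $0$, their internal direct sum $C^{\bullet}[\sigma-1]\oplus C^{\bullet}[P(\sigma)]$ has finite index in $C^{\bullet}$ in each degree, and so $C'^{\bullet}:=C^{\bullet}/(C^{\bullet}[\sigma-1]\oplus C^{\bullet}[P(\sigma)])$ is a bounded complex of finite abelian groups. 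Moreover $C^{\bullet}[\sigma-1]\otimes_{\mathbb{Z}}\mathbb{Q}=C^{\bullet}_{\mathbb{Q}}[\sigma-1]$ is acyclic (being a direct summand of the acyclic complex $C^{\bullet}_{\mathbb{Q}}$), so $H^{i}(C^{\bullet}[\sigma-1])$ is finitely generated and torsion, hence finite, and similarly for $H^{i}(C^{\bullet}[P(\sigma)])$. Consequently all three summands in $\log NRT_{\sigma}(C^{\bullet})$ are well-defined logarithms of orders of finite groups, and $\sum{}^{*}\log|C'_{i}|=\sum{}^{*}\log|H^{i}(C'^{\bullet})|$, since for a bounded complex of finite abelian groups the alternating product of orders of terms equals the alternating product of orders of cohomology.

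Next I would exploit the key observation that any $\mathbb{Z}[\sigma]$-equivariant map $\phi\colon C^{\bullet}\to E^{\bullet}$ carries $\ker(\sigma-1)$ into $\ker(\sigma-1)$ and $\ker(P(\sigma))$ into $\ker(P(\sigma))$; hence $\phi$ restricts to chain maps $C^{\bullet}[\sigma-1]\to E^{\bullet}[\sigma-1]$ and $C^{\bullet}[P(\sigma)]\to E^{\bullet}[P(\sigma)]$, it preserves the subcomplex $C^{\bullet}[\sigma-1]\oplus C^{\bullet}[P(\sigma)]$, and so it induces a chain map $C'^{\bullet}\to E'^{\bullet}$. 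Apply this to a $\mathbb{Z}[\sigma]$-equivariant chain homotopy equivalence $f\colon C^{\bullet}\to E^{\bullet}$, with equivariant homotopy inverse $g$ and equivariant homotopies $H,H'$ satisfying $gf-1=dH+Hd$ and $fg-1=dH'+H'd$: since $g,H,H'$ are also equivariant they too respect the two kernels and their direct sum, so the identities restrict to $C^{\bullet}[\sigma-1]$, restrict to $C^{\bullet}[P(\sigma)]$, and descend to $C'^{\bullet}$. Therefore $f$ restricts to chain homotopy equivalences $C^{\bullet}[\sigma-1]\simeq E^{\bullet}[\sigma-1]$ and $C^{\bullet}[P(\sigma)]\simeq E^{\bullet}[P(\sigma)]$ and descends to a chain homotopy equivalence $C'^{\bullet}\simeq E'^{\bullet}$.

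Finally, a chain homotopy equivalence induces isomorphisms on cohomology, so $H^{i}(C^{\bullet}[\sigma-1])\cong H^{i}(E^{\bullet}[\sigma-1])$, $H^{i}(C^{\bullet}[P(\sigma)])\cong H^{i}(E^{\bullet}[P(\sigma)])$, and $H^{i}(C'^{\bullet})\cong H^{i}(E'^{\bullet})$ for every $i$; all of these groups are finite, so the corresponding orders agree. Substituting into the defining formula for $\log NRT_{\sigma}$ gives the claim. I expect the only step requiring genuine care to be the first paragraph — verifying purity of the sublattices $C^{\bullet}[\sigma-1],C^{\bullet}[P(\sigma)]$, the finiteness of $C'^{\bullet}$ and of the cohomologies of the two subcomplexes, and the fact that \emph{all} of $f,g,H,H'$ simultaneously respect the $(\sigma-1)/P(\sigma)$ decomposition; once this bookkeeping is in place the invariance is immediate. (The argument is identical for general prime $p$; the same reasoning with $\sigma+1$ in place of $P(\sigma)$ recovers the case $p=2$.)
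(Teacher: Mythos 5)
Your proof is correct and takes essentially the same approach as the paper: restrict the equivariant chain maps and homotopies to $C^{\sigma-1}$ and $C^{P(\sigma)}$ to get chain homotopy equivalences there, then handle the quotient $C'^\bullet$. The only difference is in the final step: the paper deduces $H^*(C'^\bullet)\cong H^*(E'^\bullet)$ via the five lemma applied to the long exact sequence of $0\to C^{\sigma-1}\oplus C^{P(\sigma)}\to C\to C'\to 0$, whereas you note that $f,g,H,H'$ all descend to $C'^\bullet$ and so produce a chain homotopy equivalence $C'^\bullet\simeq E'^\bullet$ directly; both are valid and this is only a cosmetic variation. Your opening paragraph (purity, finiteness of $C'^\bullet$ and of the cohomologies, and the identity $\sum^*\log|C'_i|=\sum^*\log|H^i(C'^\bullet)|$) supplies background the paper leaves implicit, and your explicit observation that the homotopies $H,H'$ must themselves be $\mathbb{Z}[\sigma]$-equivariant in order to restrict is a point the paper also uses silently.
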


\begin{proof}
%

Suppose that $f: C \rightarrow E$ and $g: E \rightarrow C$ are inverse up to chain homotopy of $\mathbb{Z}[\sigma]$-modules.  Then $f^{\sigma - 1}: C^{\sigma - 1} \rightarrow E^{\sigma - 1}$ and $g^{\sigma-1}:E^{\sigma - 1} \rightarrow C^{\sigma - 1}$ are chain homotopy inverses.  Likewise, $f^{P(\sigma)}: C^{P(\sigma)} \rightarrow E^{P(\sigma)}$ and $g^{P(\sigma)}: E^{P(\sigma)} \rightarrow C^{P(\sigma)}$ are chain homotopy inverses.  Therefore, 
$$f^{\sigma - 1}: H^{*}(C^{\sigma - 1}) \xrightarrow{\sim} H^{*}(E^{\sigma - 1})$$  
$$f^{P(\sigma)}: H^{*}(C^{P(\sigma)}) \xrightarrow{\sim} H^{*}(E^{P(\sigma)})$$ 
are isomorphisms.  By the five lemma,  
$$f: H^{*}(C / (C^{\sigma-1} \oplus C^{P(\sigma)})) \xrightarrow{f} H^{*}(E/ (E^{\sigma - 1} \oplus E^{P(\sigma)}))$$
is an isomorphism too.  The result follows.

%
\end{proof}

\begin{lem}[(b) additivity]  Let $C_0$ and $C_1$ be bounded chain complexes of finite free abelian groups.  Suppose that either $\chi(C_0) = 0$ or $\chi(C_1) = 0.$  Then
$$\log NRT_{\sigma}((C_0 \oplus C_1)^{\otimes p}) = \log NRT_{\sigma}(C_0^{\otimes p}) + \log NRT_{\sigma}(C_1^{\otimes p}).$$
\end{lem}

\begin{proof}
We will actually prove this equality for each curly brace occurring in the definition of $NRT_{\sigma}$ individually.

\begin{itemize}
\item
The equality 
\begin{eqnarray*}
(p-1)[(C_0 \oplus C_1)^{\otimes p}]^{\sigma - 1} - [(C_0 \oplus C_1)^{\otimes p}]^{P(\sigma)} &=& (p-1) [C_0^{\otimes p}]^{\sigma - 1} - [C_0^{\otimes p}]^{P(\sigma)} \\
&+&  (p-1) [C_1^{\otimes p}]^{\sigma - 1} - [C_1^{\otimes p}]^{P(\sigma)}.
\end{eqnarray*}
in the Grothendieck group of $\mathbb{Z}$-modules can be checked after making a finite flat base change to $R = \mathbb{Z}[\mu_p].$  We can expand
$$(C_0 \oplus C_1)^{\otimes p} = \oplus_{\epsilon} C_{\epsilon_1} \otimes ... \otimes C_{\epsilon_p} =: \oplus_{\epsilon}C_{\epsilon},$$
where $\epsilon = (\epsilon_1,...,\epsilon_p)$ runs over all binary sequences of length $p.$  $\sigma$ acts by cyclic permutation on $(C_0 \oplus C_1)^{\otimes p}$ and so permutes the above summands in an evident manner.  Consider a $\sigma$-orbit $\mathcal{O}$ of summands 
$$C_{\mathcal{O}} = C_{\epsilon} \oplus ... \oplus C_{\sigma^{p-1} \epsilon}$$
where not all $\epsilon_i$ are equal.  Clearly, for every $p$th root of unity $\zeta,$ the group
$$C_{\mathcal{O}}^{\sigma - \zeta} \cong C_{\epsilon}$$
by projection.  Thus,
\begin{eqnarray*}
(p-1)C_{\mathcal{O}}^{\sigma - 1} - C_{\mathcal{O}}^{P(\sigma)} &=& (p-1) C_{\mathcal{O}}^{\sigma - 1} - \oplus_{\zeta \in \mu_p} C_{\mathcal{O}}^{\sigma - \zeta} \\
&=& (p-1) C_{\epsilon} - \oplus_{\zeta \in \mu_p} C_{\epsilon} \\
&=& 0
\end{eqnarray*}
in the Grothendieck group of $R$-modules.  Additivity for the first curly-braced term follows.     

\item
For any complex $E$ which is $\mathbb{Z}[\sigma]$-free, 
$$\sum {}^{*} \log |E / (E^{\sigma - 1} \oplus E^{P(\sigma)})| = \log p \sum {}^{*} \text{rank}_{\mathbb{Z}[\sigma]}(E_i).$$
But the complex
$$C_{\mathcal{O}} = C_{\epsilon} \oplus ... \oplus C_{\sigma^{p-1} \epsilon},$$
where not all $\epsilon_i$ are equal, is free over $\mathbb{Z}[\sigma]$ of rank $\text{rank}_{\mathbb{Z}}C_{\epsilon}.$  Thus,
\begin{eqnarray*}
\sum {}^{*} \log |C_{\mathcal{O}} / (C_{\mathcal{O}}^{\sigma - 1} \oplus C_{\mathcal{O}}^{P(\sigma)})| &=& \log p \sum {}^{*} \text{rank}_{\mathbb{Z}} C_{\epsilon}\\
&=& \log p \; \chi(C_{\epsilon}) \\
&=& \log p \; \prod \chi(C_{\epsilon_i}) \\
&=& 0.
\end{eqnarray*}
\end{itemize}
Putting these two calculations together, we see that indeed
$$\log NRT_{\sigma}( (C_0 \oplus C_1)^{\otimes p}) = \log NRT_{\sigma}(C_0^{\otimes p}) + \log NRT_{\sigma}(C_1^{\otimes p}).$$
\end{proof}

\section{Corrections and Improvements} \label{corrections}

Our proof of Corollary \ref{twistedatequalstwistedrt} proceeded in two steps: 
\begin{itemize}
\item[(a)]
Prove that 
\begin{equation} \label{comparisontoproducterror}
\tau_{\sigma}(\M,\loc) - RT_{\sigma}(\M,\loc) = \tau_{\sigma}(\M_{\sigma}^p, L^{\boxtimes p}) - RT_{\sigma}(\M_{\sigma}^p,L^{\boxtimes p}).
\end{equation}
This is Theorem \ref{finalmariage}. Our proof of \eqref{comparisontoproducterror} was cavalier about the connected components of $\M_{\sigma}.$  Nonetheless, \eqref{comparisontoproducterror} can still be salvaged.  
\item[(b)]
Prove under the hypotheses of Corollary \ref{twistedatequalstwistedrt} that
\begin{equation}
 \tau_{\sigma}(\M_{\sigma}^p, L^{\boxtimes p}) - RT_{\sigma}(\M_{\sigma}^p,L^{\boxtimes p}) = 0
\end{equation} 
for a special choice of Morse theroetic data implicit in the definition of $RT_{\sigma}.$  While this happens to be true, we'll explain how the proof described in \S \ref{product} relies on some unchecked compatibilies in Morse theory.
\end{itemize}

In \S \ref{stepa}, we discuss how to salvage Step (a) by a variation on the argument from Proposition \ref{isometric}.  In \S \ref{stepb}, we discuss the subtleties surrounding Step (b).  In \S \ref{errortermzero}, we discuss a direct approach to understanding the error term in the Bismut-Zhang formula.  For readers only interested in the statement of Corollary \ref{twistedatequalstwistedrt}, we \emph{strongly recommend proceeding directly to \S \ref{errortermzero}}.
 
\subsection{Step (a), Proposition \ref{isometric}, and Remark \ref{ssnormalbundle}} \label{stepa}

\subsubsection{Proposition \ref{isometric} and Remark \ref{ssnormalbundle}}
We let the isometry group of a symmetric space act on the right for consistency with  \cite{Rohlfs}.  \\
Let $\mathbf{H}$ be real semisimple group, $H = \mathbf{H}(\RR), K \subset H$ a maximal compact subgroup and $X = K \backslash H$ its associated symmetric space.  Suppose $\sigma$ is an automorphism of $\mathbf{H}$ of finite order $p$ normalizing $K$ and acting isometrically on $X.$  Let $\Gamma \subset H$ be a cocompact, $\sigma$-stable, torsion-free lattice.  Let $\M = K \backslash H / \Gamma.$

Let $\M_0$ be a connected component of the fixed point set $\M_\sigma.$  Fix a base point $p \in \M_0,$ a lift $\widetilde{p} \in X$ and let $i_0: \widetilde{\M_0} \rightarrow X$ be the resulting totally geodesic inclusion of the universal cover of $\M_0$ into $X.$  The heart of Proposition \ref{isometric} is showing that 
\begin{equation} \label{ssinclusion} 
i_0: \widetilde{\M_0} \rightarrow X \text{ is isometric to the inclusion } \Delta: \widetilde{\M_0}  \xrightarrow{\text{diagonal}} \widetilde{\M_0}^p.
\end{equation}
In general, \eqref{ssinclusion} is false.   For example, no simple symmetric space $X$ admits a product structure.  In particular, the statement from Remark \ref{ssnormalbundle} ``The above identification of normal bundles carries through in full generality" is misleading.  It is also disconcerting that $\M_{\sigma}$ is not necessarily equidimensional, making the meaning of some things written in Proposition \ref{isometric} unclear.  

However, in the first application intended for this paper \cite{Lip2}, we took $\mathbf{H} = R_{E/F} \G,$ where $F$ is imaginary quadratic, $E/F$ is a cyclic Galois extension of prime degree, and $\G = \PGL_1(D)$ for a quaternion algebra $D/F.$  We use the formalism of \cite{Rohlfs} to explain why Proposition \ref{isometric} is okay in this situation.  In particular, we'll explain why \eqref{ssinclusion} holds.

Let $\Gamma \subset H$ be $\sigma$-stable.  Then $\sigma$ acts isometrically on $\Gamma \backslash X = \M.$  For simplicity, suppose $\sigma$ is an involution.  Let 
\begin{align*}
Z^1(\sigma,\Gamma) &= \{ \delta \in \Gamma:  \delta \sigma(\delta) = 1\} \\
H^1(\sigma, \Gamma) &= Z^1(\sigma,\Gamma) / ( \delta \sim \gamma^{-1} \delta \sigma(\gamma) ).
\end{align*}

Every $\delta \in Z^1(\sigma, \Gamma)$  gives rise to a different automorphism of $\mathbf{H}$:  
$$\sigma_\delta(g) := \delta \sigma(g) \delta^{-1}.$$ 

The self-isometry of $X$
\begin{eqnarray*}
\sigma_\delta : X &\rightarrow& X \\
x &\mapsto& \sigma(x) \cdot \delta^{-1}
\end{eqnarray*}
acts compatibly: 
$$\sigma_\delta(x \cdot g) = \sigma_\delta(x) \cdot \sigma_{\delta}(g) \text{ for all } g \in H, x \in X.$$
In particular, $\sigma_{\delta}$ induces a self-isometry of $X / \Gamma$ and $\sigma_{\delta}(x\Gamma) = \sigma(x\Gamma)$ for all $x \in X.$  

Let $X_{\delta}$ and $\Gamma_{\delta}$ respectively denote the fixed point sets of $\sigma_{\delta}$ acting on $X$ and $\Gamma.$  Let $F_{\delta}$ denote the image of the natural map
$$X_{\delta} / \Gamma_{\delta} \xrightarrow{\cong} F_{\delta} \subset (X / \Gamma)^{\sigma}$$
As Rohlfs explains \cite[Proposition 1.3]{Rohlfs},

$$(X / \Gamma)^{\sigma} = \bigsqcup_{\delta \in H^1(\sigma,\Gamma)} F_{\delta}.$$

In particular, if $H^1(\sigma, H) = 0,$ then every $X_{\delta}$ is connected and $F_{\delta}$ are the connected components of $(X / \Gamma)^{\sigma}.$

Suppose that $H^1(\sigma,H) = 0.$  Then every $\delta \in Z^1(\sigma,H)$ can be expressed as $\delta = \gamma^{-1} \sigma(\gamma).$  The element $\gamma$ yields an isometry
\begin{eqnarray*}
X_{\delta} &\rightarrow& X_1 = X^{\sigma} \\
x &\mapsto& x \cdot \gamma^{-1}
\end{eqnarray*}
induced by the translation by $\gamma^{-1}$ self-isometry of $X.$  It follows that the normal bundles of the inclusions $X_{\delta} \subset X$ and $X_1 = X^{\sigma} \subset X$ are isometric. 

We finally note that if $H = G^p$ and $\sigma$ acts by cyclic permutation, then $H^1(\sigma,H) = 0,$ the symmetric spaces $X_H = X_G^p$ are identified, and Proposition \ref{isometric} follows.

\subsection{Step (b)} \label{stepb}
This step is problematic.  After the comparison from Theorem \ref{finalmariage}, we must compute 
$$\tau_{\sigma}(\M_{\sigma}^p, L^{\boxtimes p}) - RT_{\sigma}(\M_{\sigma}^p,L^{\boxtimes p}),$$
having no control on the metric implicit in defining the analytic torsion term $\tau_{\sigma}$ or over the Morse theoretic data implicit in defining $RT_{\sigma}.$  As explained in Theorem \ref{finalmariage}, one can appeal to the anomaly formula \cite[Theorem 0.1]{BZ2} to prove metric independence of $RT_{\sigma},$ at least when all components of the fixed point set are odd dimensional.  Having odd dimensional fixed point sets is the situation of most interest for applications to twisted torsion growth. 

In our attempted proof of Theorem \ref{productcheegermuller}, we tried to compute $RT_{\sigma}(M_{\sigma}^p,L^{\boxtimes p})$ by relating it to the twisted torsion of a product cellulation, which is straightforward to relate to the Reidemeister torsion of $(M,L).$  By taking the two-fold barycentric subdivision $(K^p)''$ of a product cellulation $K^p$ of $M^p$ \cite[III,Proposition 1.1]{Bredon}, which is equivariantly chain-homotopic to $K^p,$ one obtains a $\langle \sigma \rangle$-CW triangulation in the sense of Illman \cite{Ill}.  In the proof of Theorem \ref{productcheegermuller}, we then appealed to the main result of Illman \cite{Ill} to relate Reidemeister torsion computed from $(K^p)''$ and that computed from the abstract Morse data we cannot control: for every finite group $U,$ two smooth triangulation of a $U$-manifold admit a smooth common refinement.  

This proof would succeed if we knew that the Reidemeister torsion of the chain complex given by the abstract Morse data were equivariantly chain isomorphic (or at least equivariantly chain homotopic) to the $G$-cellulation of $M^p$ given by the unstable manifolds of the Morse data.  When $G = 1,$ this compatibility is known for a special choice of Morse data \cite[\S 2.5]{Nic}.  We have not verified the required compatibility for $G \neq 1$ and the Morse data implicit in Theorem \ref{finalmariage}.  

\subsection{Direct understanding of the Bismut-Zhang error term} \label{errortermzero}
Fortunately, we can circumvent all difficulties from \S \ref{dRecmt} and \S \ref{product} by understanding the error term in the Bismut-Zhang formula directly.

\begin{prop} \label{betterBZ}
Let $(\M,g)$ be an arbitrary compact, Riemannian manifold.  Let $\loc \rightarrow M$ be a metrized, unimodular local system.  Let $\sigma$, of finite order, act equivariantly on $\loc \rightarrow M$ by isometries.  Suppose every component of the fixed point set $\M_\sigma$ of $\M$ is odd dimensional.  Then there is a choice of Morse data $(f,X)$ on $\M$ satisfying the hypotheses of Theorem \ref{BZ} for which
$$\tau_{\sigma}(\M,\loc,g) = RT_{\sigma}(\M,\loc; f,X).$$
\end{prop}

\begin{rem} \label{metricsforbetterBZ}
No acyclicity assumption appears in Proposition \ref{betterBZ}. The quantity $RT_{\sigma}(\M,\loc;f,X)$ depends on an auxillary choice of metrics on $H^\bullet(\M,\loc)^{\sigma-1}$ and $H^\bullet(\M,\loc)^{P(\sigma)}$ for all $i.$  It is for the metric induced by $\loc$-valued harmonic forms with their $L^2$-metric that Proposition \ref{betterBZ} holds. 
\end{rem}

\begin{proof}
Reprise the notation of Theorem \ref{BZ}.  As explained in Lemma \ref{unimodular}, the ``first half" of the error term in the Bismut-Zhang formula vanishes because the differential form $\theta_{\sigma}(\loc,h_\loc)$ is identically zero.  The Bismut-Zhang formula thus simplifies to
\begin{align} \label{morsepartoferrorterm}
&\tau_{\sigma}(\M,\loc,g) - RT_{\sigma}(\M,\loc; f,X) \nonumber \\
&= - \frac{1}{4} \sum_{x \in \emph{Crit}(f) \cap \M_{\sigma}} (-1)^{\ind(f|_{\M_{\sigma}},x)} \sum_j (n_{+}(\beta_j,x) - n_{-}(\beta_j,x)) \cdot C_j \cdot \tr [\sigma | \loc_x]. 
\end{align}
We apply this formula for the special Morse data constructed in \cite[Theorem 1.8]{BZ2} and \cite[Theorem 1.10]{BZ2}.  Specifically, 
\begin{itemize}
\item[($\alpha$)]
Construct an invariant Morse function $f$ whose Hessian is \emph{positive definite on the normal bundle to} $\M_\sigma.$

\item[($\beta$)]
Modify the metric $g$ to a metric $g'$ which equals $g$ in a neighborhood of all critical points of $f$ in such a way that $X = \nabla_{g'}(f)$ satisfies Morse-Smale transversality.
\end{itemize}

For this particular choice of Morse data, the expression
$$\sum_j (n_{+}(\beta_j,x) - n_{-}(\beta_j,x)) \cdot C_j \cdot \tr [\sigma | \loc_x]$$
is constant for critical points $x$ in a single connected component $\M_0$ of the fixed point set $\M_\sigma.$  Indeed, $n_{+}(\beta_j,x) = \dim N(\beta_j)|_{\M_0}, n_{-}(\beta_j,x) = 0,$ and $\tr(\sigma | L_x)$ is a continuous integer valued function of $x \in \M_0$ and hence is constant.  Therefore,
\begin{align*}
& - \frac{1}{4} \sum_{x \in \emph{Crit}(f) \cap \M_{\sigma}} (-1)^{\ind(f|_{\M_{\sigma}},x)} \sum_j (n_{+}(\beta_j,x) - n_{-}(\beta_j,x)) \cdot C_j \cdot \tr [\sigma | \loc_x] \\
&= \sum_{ \M_0 \in \pi_0(\M_\sigma)} \text{constant}(\M_0) \sum_{x \in Crit(f) \cap \M_0}  (-1)^{\ind(f|_{\M_{\sigma}},x)} \\
&= \sum_{\M_0 \in \pi_0(\M_\sigma)} \text{constant}(\M_0) \chi(\M_0) \\
&= 0.
\end{align*}
Thus, 
$$\tau_{\sigma}(\M,\loc,g') - RT_{\sigma}(\M,\loc; f,X) = 0.$$
To conclude, note that by the anomaly formula of Bismut-Zhang \cite[Theorem 0.1]{BZ2},
$$\tau_{\sigma}(\M,\loc,g') = \tau_{\sigma}(\M,\loc,g)$$
when all components of the fixed point set are odd-dimensional.
\end{proof}

\begin{rem}
Assume $\M$ is connected.  If $\sigma$ is an involution and $\M_\sigma \neq \emptyset,$ the orientation of $\sigma$ equals $(-1)^{\mathrm{codim}(\M_0)}$ for every connected component of $\M_0.$  In particular, all components $\M_0$ have dimension of the same parity.  This dimension is odd if $\M$ is odd-dimensional and $\sigma$ is orientation-preserving or if $\M$ is even-dimensional and $\sigma$ is orientation-reversing.  
\end{rem}

\subsection{Corollary \ref{twistedrtlss} holds for local systems which are not rationally acyclic}
Let $A^\bullet = MS^\bullet(X,\loc_{\RR})$ be the Morse-Smale complex for the metrized local system $\loc_{\RR} \rightarrow \M$ of $\RR$-vector spaces acted on equivariantly by an isometry $\sigma$ of order $p.$  Defining the equivariant Reidemeister torsion  $RT_{\sigma}(A^\bullet)$ requires volume forms on every $A^i$ and volume forms on every $H^i(A^\bullet).$  The volume forms on every $A^i$ were described in the discussion preceeding Definition \ref{morsesmalert}.  If $\loc_{\RR}  \rightarrow \M$ is acyclic, Definition \ref{morsesmalert} is complete.  This acyclicity assumption was enforced whenever convenient throughout the paper, since it was required for all of our intended applications at the time.  We define the Reidemester torsion of $A^\bullet$ more completely now.  

\begin{defn} \label{morsesmalertwithregulator}
Notation as above.  We define $RT_{\sigma}(X, \loc_{\RR})$ to be the $\sigma$-equivariant Reidemeister torsion of the $\sigma$-complex $A^\bullet := \MS^{\bullet}(X,\loc_{\RR})$ with the metric described in the discussion preceeding Definition \ref{morsesmalert}  \textcolor{blue}{together with metrics on $H^*(A^\bullet[\sigma - 1])$ and $H^*(A^\bullet[P(\sigma)])$ induced by the $L^2$-metric on $\loc_{\RR}$-valued harmonic forms on $\M$.}
\end{defn}

As Remark \ref{metricsforbetterBZ} explains, it is for this choice of metrics that Proposition \ref{betterBZ} holds.

For the rest of this section, suppose $\loc_\RR$ is the base change to $\RR$ of an equivariant local system of free ableian groups $\loc \rightarrow \M.$

\begin{defn} \label{twistedregulator}
Let $R^i(\M,\loc)^{\sigma - 1}$ and $R^i(\M,\loc)^{P(\sigma)}$ respectively denote the volume of the lattices $H^i(\M,\loc)^{\sigma - 1} \subset H^i(\M,\loc_{\RR})^{\sigma- 1}$ and $H^i(\M,\loc)^{P(\sigma)} \subset H^i(\M,\loc_{\RR})^{P(\sigma)}$ where the vector spaces are equipped with the inner product induced by the $L^2$-metric on $\loc_{\RR}$-valued harmonic forms.  We define
$$R^i_{\sigma}(\M,\loc) := \frac{R^i(\M,\loc)^{\sigma-1}}{ \left( R^i(\M,\loc)^{P(\sigma)} \right)^{\frac{1}{p-1}}}$$ 
to be the \emph{twisted regulator} of $(\M,\loc)$ in degree $i.$
\end{defn}

\begin{prop} \label{twistedrtlsswithregulator}
Let $\M$ be a locally symmetric space.  Let $\loc \rightarrow \M$ be a \textcolor{blue}{not necessarily rationally acyclic} metrized, unimodular local system of free abelian groups acted on isometrically by $\langle \sigma \rangle \cong \mathbb{Z} / p\mathbb{Z}.$  Suppose that the fixed point set $\M_{\sigma}$ has Euler characteristic 0.  Then
\begin{eqnarray*}
\log RT_{\sigma}(\M,\loc) &=&  - \sum_i {}^{*} \left(\log \left|H^i(\M, \loc)_{\mathrm{tors}}[p^{-1}]^{\sigma - 1} \right|  -  \frac{1}{p-1} \log \left|H^i(\M, \loc)_{\mathrm{tors}}[p^{-1}]^{P(\sigma)} \right| \right) \\
&+& \textcolor{blue}{\sum {}^{*}  \log R^i_{\sigma}(\M,\loc) } \\ 
&+& O\left( \log|H^{*}(\mathcal{M}, \loc)_{\mathrm{tors}}[p^{\infty}]| +\log |H^{*}(\mathcal{M}, \loc_{\mathbb{F}_p})| + \log |H^{*}(M, \loc_{\mathbb{F}_p})| \right).
\end{eqnarray*}
\end{prop}

\begin{proof}
This is Corollary \ref{twistedrtlss} with a correction term for twisted regulators.  Corollary \ref{twistedrtlss} is proven in two steps:  

\begin{itemize}
\item
Corollary \ref{rtvsnrt} compares $N_\sigma = NRT_{\sigma}(\M,\loc),$ the naive twisted Reidmeister torsion of $(\M,\loc)$ to $RT_{\sigma}(\M,\loc).$  The definition of $RT_{\sigma}(\M,\loc)$ is to non-acyclic local systems from Definition \ref{morsesmalert} using harmonic metrics as described in Definition \ref{morsesmalertwithregulator}.  If we replace $N_{\sigma}$ by $N_{\sigma}'$ defined by 
$$\log N_\sigma' := \log N_\sigma + \sum {}^* \log R^i_\sigma(\M,\loc),$$ 
the proof of Corollary \ref{rtvsnrt} continues to work without change.  

\item
The analogue for $p$ of Proposition \ref{finitecomplexestimate} (see $\eqref{thirdest}_p$) compares $\log N_{\sigma}$ with the first line of the equation displayed in Proposition \ref{twistedrtlsswithregulator} with error bounded by the third line.  None of the estimates of Proposition \ref{finitecomplexestimate} are affected by the acyclicity assumption.
\end{itemize}

Incorporating the above minor change (accounting for twisted regluators), the proof for Corollary \ref{twistedrtlss} extends to non-acyclic local systems too.
\end{proof}

\nocite{astl2,perry1,pollardsag1}



\end{document}